\newtheorem{theorem}{Theorem}[section]
\newtheorem{corollary}[theorem]{Corollary}
\newtheorem{lemma}[theorem]{Lemma}
\newtheorem{proposition}[theorem]{Proposition}
\theoremstyle{definition}
\newtheorem{definition}[theorem]{Definition}
\theoremstyle{remark}
\newtheorem{remark}[theorem]{Remark}
\numberwithin{equation}{section}
\newcommand{\N}{\mathbb{N}}
\newcommand{\R}{\mathbb{R}}
\begin{document}
	\title[Blow-up for a cubic NLS type system]{BLOW-UP OF RADIALLY SYMMETRIC SOLUTIONS FOR A CUBIC NLS TYPE SYSTEM IN DIMENSION 4}

	\author{MAICON HESPANHA}\address{Departamento do Matemática, Instituto de Matemática, Estatística e Computação Científica (IMECC), Universidade Estadual de Campinas (UNICAMP) \\
		Rua S\'ergio Buarque de Holanda, 651,  13083-859, Campinas--SP, Brazil}
	\email{mshespanha@gmail.com}
	
		\author{ADEMIR PASTOR}\address{Departamento do Matemática, Instituto de Matemática, Estatística e Computação Científica (IMECC), Universidade Estadual de Campinas (UNICAMP) \\
			Rua S\'ergio Buarque de Holanda, 651,  13083-859, Campinas--SP, Brazil}
		\email{apastor@ime.unicamp.br}

\subjclass[2020]{35Q55, 35B44, 35A01, 35J50}
\keywords{Energy-critical; Nonlinear Schrödinger equation; Ground state solutions; Blow-up; Cubic-type nonlinearities}

\begin{abstract} 
This paper is concerned with a cubic nonlinear Schor\"odinger system modeling the interaction between an optical beam and its third harmonic in a material with Kerr-type nonlinear response. We are mainly interested in the so-called energy-critical case, that is, in dimension four. Our main result states that radially symmetric solutions with initial energy below that of the ground states but with kinetic energy above that of the ground states must blow-up in finite time. The proof of this result is based on the convexity method. As an independent interest we also establish the existence of ground state solutions, that is, solutions that minimize some action functional. In order to obtain our existence results we use the concentration-compactness method combined with variational arguments. As a byproduct, we also obtain the best constant in a vector critical Sobolev-type inequality.
\end{abstract}

\maketitle

\section{Introduction}\label{intr}
We study the following nonlinear Schrödinger system 
\begin{equation}\label{SIST1}
\begin{cases}
iu_t+\Delta u-u+\left(\dfrac{1}{9}|u|^2+2|w|^2\right)u+\dfrac{1}{3}\bar{u}^2w=0,\\
i\sigma w_t+\Delta w-\mu w+(9|w|^2+2|u|^2)w+\dfrac{1}{9}u^3=0,\\
\end{cases}
\end{equation}
where $u=u(t,x)$ and $w=w(t,x)$ are complex-valued functions with $(t,x)\in \mathbb{R}\times \mathbb{R}^d$, $\Delta$ represents the standard Laplacian operator and $\sigma,\mu$ are positive real constants. This model describes the interaction between an optical beam and its third harmonic in a material with Kerr-type nonlinear response. For a more detailed explanation of the model, the reader can check \cite{SBK}.

From a mathematical point of view,  system \eqref{SIST1} has been studied in several situations. In \cite{AP} and \cite{pastor4}, the authors established local and global well-posedness for the associated Cauchy problem with periodic initial data in dimension one; the existence of periodic standing waves of dnoidal and cnoidal-type as well as their nonlinear and spectral stability were also established in the energy space. More precisely, the authors proved the existence of two smooth curves of periodic solutions and established the stability/instability  under several perturbations regime. For the multidimensional case in $\R^d$, $1\leq d\leq3$, several results on the dynamics of \eqref{SIST1} were proved in \cite{oliveira};  such results include existence and stability of ground state solutions, local and global well-posedness in the energy space $H^1(\mathbb{R}^d)\times H^1(\mathbb{R}^d)$  and several criteria for blow-up in finite time. To be more precise, we recall that \eqref{SIST1} conserves the energy and the mass given, respectively, by
\begin{equation}\label{Energia}
	E(u,w):=\frac{1}{2}\int (|\nabla u|^2 +|\nabla w|^2+|u|^2+\mu|w|^2)-\int\left(\frac{1}{36}|u|^4+\frac{9}{4}|w|^4+|u|^2|w|^2+\frac{1}{9}\hbox{Re}(\bar{u}^3w)\right)
\end{equation}
and
\begin{equation}\label{M}
	M(u,w):=\int(|u|^2+3\sigma|w|^2),
\end{equation}
which means that such a quantities do not depend on time when evaluated at a solution of \eqref{SIST1}. The of local well-posedness for the Cauchy problem associated with \eqref{SIST1} in the energy space  is standard by now and it follows as in the scalar cubic nonlinear Schr\"odinger equation taking into account the well-known Strichartz estimates. Indeed, we can prove the local well-posedness using the fixed point method to find solutions of the equivalent integral equations  
\begin{equation*}
\begin{cases}
	{\displaystyle	u(t)=U(t)u_0+i\int_0^t U(t-s)F(u(s),w(s))ds,}\\
	{\displaystyle	w(t)=W(t)w_0+i\int_0^t W(t-s)G(u(s),w(s))ds,}\\
	\end{cases}\
\end{equation*}
where $U(t)=e^{it(-\Delta +1)}$ and $W(t)=e^{it(-a\Delta +b)}$ are the corresponding unitary groups associated to the linear part of \eqref{SIST1}, with $a=1/\sigma$, $b=\mu/\sigma$, and 
\begin{equation*}
	F(u,w)= \left(\frac{1}{9}|u|^2+2|w|^2\right)u + \frac{1}{3}\bar{u}^2w \quad \mbox{and}\quad G(u,w)=a(9|u|^2 + 2|w|^2)w +\frac{1}{9}au^3.
\end{equation*}
On the other hand, the extension of the local solution to a global one depends on the dimension. It is well-known that the cubic nonlinearity is (mass) subcritical, critical, and supercritical, with respect to scaling, in dimensions $d=1$, $d=2$, and $d=3$, respectively. With this in mind, in the case $d=1$, using conservation of the energy and the mass, the local solution can be promptly extended to a global one. In the case $d=2$, we obtain global solutions provided the initial data satisfies $M(u_0,w_0)<M(P,Q)$, where $(P,Q)$ is any  ground state solution of the elliptic system (with $\omega=0$ and $\mu=3\sigma$)
\begin{equation}\label{SIST2}
	\begin{cases}
		{\displaystyle	\Delta P-(\omega+1)P+\left(\frac{1}{9}P^2+2Q^2\right)P+\frac{1}{3}P^2Q=0,}\\
		{\displaystyle \Delta Q-(\mu+3\sigma\omega) Q+(9Q^2+2P^2)Q+\frac{1}{9}P^3=0.}
	\end{cases}
\end{equation}
An (action) ground state is a solution of \eqref{SIST2} that minimizes the action $E+\frac{\omega}{2}M$. In \cite{oliveira} it was proved that such a minimizers do exist provided $\omega>\max\{-1,-\mu/3\sigma\}$.
In addition, in the case $d=3$, the local solution can be extended to a global one provided $2E(u_0,w_0)M(u_0,w_0)<E(P,Q)M(P,Q)$ and $K(u_0,w_0)M(u_0,w_0)<K(P,Q)M(P,Q)$, where
$$
K(u,w)=\int |\nabla u|^2+|\nabla w|^2.
$$
The details of these results may be found in \cite[Section 3]{oliveira}. We also recall that sharp threshold criteria to global well-posedness and scattering of solutions and the formation of
singularities in finite time for (anisotropic) symmetric initial data were established in \cite{ardila}.

Recall that system \eqref{SIST2} appears when we are looking for  standing-wave  solutions of \eqref{SIST1} of the form
\begin{equation*}
	u(x,t)=e^{i\omega t}P(x),\quad w(x,t)=e^{3i\omega t}Q(x),
\end{equation*}
where $P$ and $Q$ are real-valued functions with a suitable decay at infinity. The orbital stability/instability of the ground states was also studied in \cite{oliveira} under some suitable conditions on the parameters. In particular the orbital stability was proved under the assumption $d=1$ and $\omega+1=\mu+3\sigma$. Also, the orbital instability was established provided $d=3$ and $\mu>0$  or $d=2$ and $\mu\neq3\sigma$. More recently, these results were extended in \cite{stefanov}, where the authors constructed the waves in the largest possible parameter regime and established a complete classification of their spectral stability/instability. The instability by blow-up was also established in dimensions 2 and 3, depending on the parameters $\mu$ and $\sigma$. Also, in \cite{colin}, it was obtained (energy) ground state solutions with a prescribed mass; in particular, their main result establishes the relation between energy ground states, action ground states and minimizers of the well-known  Weinstein functional. Finally, in \cite{zhang}, it was proved existence results for normalized ground state solutions in the $L^2$-subcritical and $L^2$-supercritical cases and  the nonexistence of normalized ground state  in the $L^2$-critical case and a new
blow-up criterion which is related to normalized solutions. For additional results concerning the existence of solitary traveling-wave solutions, we refer the reader to \cite{likai}.

In the present paper, we are interested in showing the existence of finite time blowing-up solutions in the energy-critical case, namely, in the case $d=4$. Before describing our results, we recall the results for the cubic Schr\"odinger equation
\begin{equation}\label{NLS}
	iu_t+\Delta u + |u|^2u=0,
\end{equation}
in $\dot{H}^1(\mathbb{R}^4)$. The Cauchy problem associated with \eqref{NLS} was first studied in \cite{CAZW}, where the authors established a local well-posedness result. The study of global well-posedness in the radial case was developed in the seminal paper by Kenig and Merle \cite{KM}. To be more precise, let us define the function
\begin{equation}\label{solitonNLS}
	W(x)=\left(1+\frac{|x|^2}{8}\right)^{-1}.
\end{equation}
It is well-known that $W$ is the ground state solution of the elliptic equation
\begin{equation*}
	\Delta W +|W|^2W=0.
\end{equation*}
In addition, it belongs to $\dot{H}^1(\mathbb{R}^4)$ and is connected with the best constant in the critical Sobolev inequality  (see \cite{T}). The results in \cite{KM} show that if a radial initial data satisfies $\tilde{E}(u_0)<\tilde{E}(W)$ and $\|\nabla u_0\|_{L^2}<\|\nabla W\|_{L^2}$ (here $\tilde{E}$ is the energy functional associated with \eqref{NLS}) then the corresponding solution is global and scatters; this means that it behaves like a solution of the corresponding linear problem at infinity. On the other hand, if a radial initial data is such that $|x|u_0\in L^2(\mathbb{R}^4)$ and satisfies $\tilde{E}(u_0)<\tilde{E}(W)$ and $\|\nabla u_0\|_{L^2}>\|\nabla W\|_{L^2}$ then the solution blows-up in finite time. Actually, the results in \cite{KM} were proved for the general energy-critical Schr\"odinger equation (power $4/(d-2)$ instead of $2$ in \eqref{NLS})  in dimensions $3\leq d\leq 5$. These results were extended,  without the assumption of radial initial data, to dimensions $d\geq 5$ in \cite{KV} and more recently to dimension $d=4$ in \cite{D}.

By following the strategy for the Schr\"odinger equation \eqref{NLS} (see, for instance, \cite[Chapter 4]{cazenave} or \cite[Chapter 5]{linares}), we may establish the local well-posedness of \eqref{SIST1} in the energy space. Indeed, define (see next section for notations)
\begin{equation*}
	Y(I):=C(I; H^1(\R^4))\cap L^4(I;H^{1,8/3}(\R^4)),
\end{equation*}
for a time interval $I=[-T,T]$ with $T>0$. The result is the following.
\begin{theorem}\label{BCL}
	For any $u_0,w_0\in H^1(\mathbb{R}^4)$, there exists $T(u_0,w_0)>0$, such that system \eqref{SIST1} has a unique solution $(u,w)\in Y(I)\times Y(I)$, with $I=[-T(u_0,w_0),T(u_0,w_0)]$. In addition, the map data-solution is continuous and the following blow-up alternative holds: There exist times $T_*,T^*\in(0,\infty]$ such that the solution can be extended to $(-T_*,T^*)$ and if $T^*<\infty$, then
	\begin{equation*}
		\Vert \nabla u(t)\Vert_{L_t^q([0,T^*];L_x^{r})}+\Vert\nabla w(t)\Vert_{L_t^q([0,T^*];L_x^{r})}=\infty,
	\end{equation*}
	for any pair $(q,r)$ satisfying $2< q< \infty$, $2<  r< 4$, and $ \displaystyle\frac{2}{q}=2-\frac{4}{r}$. 
 A similar result holds if $T_*<\infty.$
\end{theorem}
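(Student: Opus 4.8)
The plan is to follow the by-now standard fixed-point scheme for energy-critical NLS, adapted to the system setting. First I would set up the function spaces: for a time interval $I$, work in the complete metric space $X(I) = \{(u,w)\in Y(I)\times Y(I) : \|u\|_{Y(I)}, \|w\|_{Y(I)} \le 2M_0\}$ for a suitable $M_0$ depending on the Strichartz norms of the linear evolutions $U(t)u_0$ and $W(t)w_0$, equipped with a weaker metric (say the $L^4(I;L^{8/3})$ distance, without the derivative) so that one gets a contraction. The admissible pair $(4,8/3)$ is $\dot H^1$-admissible in dimension four (it satisfies $2/4 = 2 - 4/(8/3) = 1/2$), and the dual exponent pair used for the Duhamel term is $(4/3, 8/5)$; these are the exponents for which the inhomogeneous Strichartz estimate applies.

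The core of the argument is the nonlinear estimate. The nonlinearities $F(u,w)$ and $G(u,w)$ are cubic and algebraic (sums of monomials of degree three in $u,w,\bar u,\bar w$), so I would show
\[
\|\nabla F(u,w)\|_{L^{4/3}(I;L^{8/5})} \lesssim \big(\|u\|_{Y(I)}^2 + \|w\|_{Y(I)}^2\big)\big(\|\nabla u\|_{L^4(I;L^{8/3})} + \|\nabla w\|_{L^4(I;L^{8/3})}\big),
\]
and similarly for $G$, using the Leibniz rule, Hölder in space with $\frac{1}{8/5} = \frac{1}{4} + \frac{1}{4} + \frac{1}{8/3}$ (i.e.\ two factors in $L^4_x$ — controlled by Sobolev embedding $\dot H^{1,8/3}\hookrightarrow L^4$ in $\R^4$ — and one factor of the derivative in $L^{8/3}_x$), and Hölder in time with $\frac{1}{4/3} = \frac14 + \frac14 + \frac14$. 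One also needs the difference estimate, bounding $\|\nabla(F(u_1,w_1) - F(u_2,w_2))\|$ by the same cubic-in-norm prefactor times the differences, which follows from the algebraic (polynomial) structure since $F,G$ are smooth with the appropriate homogeneity. Combined with the Strichartz estimates this gives, on a short enough interval $I$ (or for small enough data), a contraction of the Duhamel map
\[
\Phi(u,w) = \Big(U(t)u_0 + i\!\int_0^t U(t-s)F(u,w)\,ds,\ W(t)w_0 + i\!\int_0^t W(t-s)G(u,w)\,ds\Big)
\]
on $X(I)$, yielding a unique fixed point; persistence of the $C(I;H^1)$ regularity and continuous dependence on the data follow from the same estimates in the usual way.

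For the blow-up alternative, I would iterate the local theory: let $(-T_*, T^*)$ be the maximal interval of existence obtained by pushing the local solution forward and backward. If $T^*<\infty$ but the Strichartz norm $\|\nabla u\|_{L^q_t([0,T^*];L^r_x)} + \|\nabla w\|_{L^q_t([0,T^*];L^r_x)}$ were finite for one (hence, by the interpolation between Strichartz norms and the fact that the equation propagates all admissible norms, every) admissible pair $(q,r)$ in the stated range, then one shows the solution can be continued past $T^*$: finiteness of that global-in-time Strichartz norm lets one choose a small subinterval near $T^*$ on which the linear evolution from time $t$ close to $T^*$ has small Strichartz norm, run the fixed-point argument there with a uniform existence time, and glue — contradicting maximality. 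This is the standard "small data / good local theory implies blow-up criterion" argument as in \cite[Chapter 4]{cazenave}.

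The main obstacle I anticipate is purely bookkeeping: keeping track of the two distinct unitary groups $U(t) = e^{it(-\Delta+1)}$ and $W(t) = e^{it(-a\Delta+b)}$ with $a = 1/\sigma$, $b = \mu/\sigma$, and checking that the Strichartz estimates hold uniformly for $W(t)$ (they do, since rescaling $x\mapsto \sqrt a\,x$ conjugates $e^{-iat\Delta}$ to $e^{-it\Delta}$ up to a harmless lower-order phase from the $b$ term, and Strichartz constants are scale-invariant), together with the fact that $G$ carries the factor $a$ — which only affects constants. The other mild subtlety is that $F$ and $G$ are not holomorphic (they involve $\bar u$, $|u|^2$, etc.), so the difference estimates must be done monomial by monomial rather than by a complex mean-value theorem; but since each monomial is a product of three factors each of which is $u$, $\bar u$, $w$, or $\bar w$, a telescoping identity handles this with no real difficulty. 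None of the individual estimates is deep — the theorem is a routine consequence of Strichartz estimates and the algebraic structure of the nonlinearity — so the write-up is essentially an exercise in organizing the constants.
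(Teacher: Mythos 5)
Your overall strategy -- Duhamel fixed point in the Strichartz space $Y(I)=C(I;H^1)\cap L^4(I;H^{1,8/3})$, nonlinear estimate by Leibniz$+$H\"older$+$Sobolev, contraction in a weaker metric, then the standard continuation argument for the blow-up alternative -- is exactly what the paper invokes (it cites \cite{cazenave,linares} and gives no further detail), so the route is the right one. However, there is a concrete arithmetic error at the heart of the nonlinear estimate. You write the space H\"older split as
\[
\frac{1}{8/5}=\frac14+\frac14+\frac{1}{8/3}
\]
with two factors in $L^4_x$ controlled by $\dot H^{1,8/3}(\R^4)\hookrightarrow L^4$. But $\tfrac14+\tfrac14+\tfrac38=\tfrac78\neq\tfrac58$, so that identity is false, and the Sobolev exponent is also wrong: in $\R^4$ one has $\dot W^{1,8/3}\hookrightarrow L^q$ with $\tfrac1q=\tfrac38-\tfrac14=\tfrac18$, i.e.\ $\dot H^{1,8/3}(\R^4)\hookrightarrow L^8$, not $L^4$. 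The correct split is
\[
\frac{1}{8/5}=\frac18+\frac18+\frac{1}{8/3},
\]
placing the two undifferentiated factors in $L^8_x$ (via Sobolev from $\dot H^{1,8/3}$) and the derivative in $L^{8/3}_x$; combined with your time H\"older $\tfrac{1}{4/3}=\tfrac14+\tfrac14+\tfrac14$ this yields the scale-invariant bound
\[
\|\nabla F(u,w)\|_{L^{4/3}(I;L^{8/5})}\lesssim\bigl(\|\nabla u\|_{L^4(I;L^{8/3})}+\|\nabla w\|_{L^4(I;L^{8/3})}\bigr)^3,
\]
and the rest of your argument goes through unchanged. A small terminological slip in the same vein: $(4,8/3)$ is $L^2$-admissible (Schr\"odinger-admissible) in $d=4$, which is what you want since you apply Strichartz to $\nabla u$; calling it ``$\dot H^1$-admissible'' is a misnomer, as the $\dot H^1$-admissibility relation would read $\tfrac2q+\tfrac4r=1$, which $(4,8/3)$ does not satisfy. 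With these corrections the proposal matches the standard proof the paper points to.
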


In order to establish our blow-up results, we need to understand the analogue of \eqref{solitonNLS} for system \eqref{SIST1}. Let us start by recalling, from \cite[Lemma 2.2]{oliveira}, that if $(P,Q)\in H^1(\mathbb{R}^d)\times H^1(\mathbb{R}^d)$ is a solution of \eqref{SIST2} then it holds 
\begin{equation*}\label{L22}
	(d-4)\int (|\nabla P|^2+|\nabla Q|^2)+d(\omega+1)\int P^2+d(\mu+3\sigma\omega)\int Q^2=0.
\end{equation*}
Hence, if $d=4$ we have
$$
(\omega+1)\int P^2+(\mu+3\sigma\omega)\int Q^2=0,
$$
and we expect the existence of non-trivial solution for $\omega=-1$ and $\mu=3\sigma$. In this case, system \eqref{SIST2} reduces to
\begin{equation}\label{SIST3}
	\begin{cases}
		{\displaystyle 	\Delta P+\left(\frac{1}{9}P^2+2Q^2\right)P+\frac{1}{3}P^2Q=0,}\\
		{\displaystyle \Delta Q+(9Q^2+2P^2)Q+\frac{1}{9}P^3=0.}
	\end{cases}
\end{equation}
Thus our first task is to show the existence of solutions for the above elliptic system, which may be seen as critical points of the action
\begin{equation*}
	S(P,Q)=\frac{1}{2}K(P,Q)-N(P,Q).
\end{equation*}
where 
\begin{equation}\label{K&N}
	N(P,Q)=\int \frac{1}{36}P^4+\frac{9}{4}Q^4+P^2Q^2+\frac{1}{9} P^3Q.
\end{equation}

Precisely, 
\begin{definition}\label{gsdef}
	We say that
	\begin{itemize}
		\item[(i)] a pair of functions $(P,Q)\in \dot{H}^1(\mathbb{R}^4)\times\dot{H}^1(\mathbb{R}^4)$ is a weak solution to \eqref{SIST3}, if for all  $(f,g)\in \dot{H}^1(\mathbb{R}^4)\times\dot{H}^1(\mathbb{R}^4)$,
		\begin{equation}\label{solfraca}
			\begin{split}
				&\int \nabla P\cdot \nabla fdx=\int \left(\frac{1}{9}P^3+2Q^2P+\frac{1}{3}P^2Q\right)fdx,\\
				&\int \nabla Q\cdot \nabla gdx=\int \left(9Q^3+2P^2Q+\frac{1}{9}P^3\right)gdx.
			\end{split}
		\end{equation}
		\item[(ii)] A solution $(P_0,Q_0)\in \dot{H}^1(\mathbb{R}^4)\times\dot{H}^1(\mathbb{R}^4)$ is a \textit{ground state} of \eqref{SIST3} if
		$$
		S(P_0,Q_0)=\inf\{S(P,Q);\,\, (P,Q)\in\mathcal{C}\}
		$$
		where $\mathcal{C}$ denotes the set of all non-trivial solutions of  \eqref{SIST3}.  The set of all ground states  will be  denote by $\mathcal{G}$.
	\end{itemize}
\end{definition}

Our main result concerning ground states is the following.
\begin{theorem}\label{ESGS}
	There exists at least one ground state solution $(P_0,Q_0)$ for system \eqref{SIST3}, i.e., $\mathcal{G}$ is non-empty.
\end{theorem}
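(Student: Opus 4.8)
The plan is to realise the ground states as minimisers of $S$ over a Nehari-type manifold and to recover the compactness that is lost to scale invariance by means of the concentration--compactness principle, in the form of a profile (bubble) decomposition.

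Since the nonlinear functional $N$ is homogeneous of degree $4$ and $K$ of degree $2$, one has $\langle S'(P,Q),(P,Q)\rangle=K(P,Q)-4N(P,Q)$, so every non-trivial weak solution of \eqref{SIST3} lies on the Nehari manifold
\[
\mathcal{N}:=\bigl\{(P,Q)\in(\dot{H}^1(\R^4)\times\dot{H}^1(\R^4))\setminus\{(0,0)\}:\ K(P,Q)=4N(P,Q)\bigr\},
\]
that is, $\mathcal{C}\subseteq\mathcal{N}$, and (from $S=\tfrac12K-N$ and $K=4N$ on $\mathcal{N}$) one has $S=N=\tfrac14K$ on $\mathcal{N}$. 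Set $m:=\inf_{\mathcal N}S$. Estimating each quartic term of $N$ by Hölder's inequality and the scalar critical Sobolev embedding $\dot{H}^1(\R^4)\hookrightarrow L^4(\R^4)$ gives a vector Sobolev-type inequality $N(P,Q)\le C_*\,K(P,Q)^2$ with $C_*<\infty$; since $\mathcal{N}\neq\emptyset$ (rescale a pair of positive bumps onto it), this yields $0<m=\tfrac1{16C_*}<\infty$, and $C_*$ turns out to be the sharp constant (the byproduct announced in the abstract). Moreover $\langle(K-4N)'(P,Q),(P,Q)\rangle=2K-16N=-2K<0$ on $\mathcal{N}$, so $\mathcal{N}$ is a $C^1$ natural constraint and a minimiser of $S$ over $\mathcal{N}$ has vanishing Lagrange multiplier; hence such a minimiser is a non-trivial weak solution of \eqref{SIST3}, and, $\mathcal{C}\subseteq\mathcal{N}$ being valid, it also minimises $S$ over $\mathcal{C}$, i.e. it is a ground state. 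It therefore suffices to show that $m$ is attained.

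Next we take a minimising sequence $(P_n,Q_n)\subset\mathcal{N}$; then $K(P_n,Q_n)=4m+o(1)$ is bounded, so $(P_n,Q_n)$ is bounded in $\dot{H}^1(\R^4)\times\dot{H}^1(\R^4)$. Viewing it as a bounded sequence in $\dot{H}^1(\R^4;\R^2)$ and applying the profile decomposition for bounded sequences in $\dot{H}^1$, we obtain, along a subsequence, profiles $(V^j,W^j)$, scales $\lambda_n^j>0$ and centres $x_n^j\in\R^4$ with the usual asymptotic orthogonality, and a remainder $(R_n^J,T_n^J)$ with $\lim_{J\to\infty}\limsup_n(\|R_n^J\|_{L^4}+\|T_n^J\|_{L^4})=0$, such that $(P_n,Q_n)=\sum_{j=1}^J(\lambda_n^j)^{-1}(V^j,W^j)((\cdot-x_n^j)/\lambda_n^j)+(R_n^J,T_n^J)$. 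Asymptotic orthogonality provides the Pythagorean identity $K(P_n,Q_n)=\sum_{j=1}^JK(V^j,W^j)+K(R_n^J,T_n^J)+o_n(1)$ together with a Brezis--Lieb-type identity $N(P_n,Q_n)=\sum_{j=1}^JN(V^j,W^j)+o_n(1)+o_J(1)$, every mixed bubble--bubble and bubble--remainder interaction vanishing in the limit (this needs some care for the non-symmetric coupling term $\tfrac19\int P^3Q$, but is handled exactly as for the diagonal quartic terms). Writing $\alpha_j:=K(V^j,W^j)\ge0$ and $\beta_j:=N(V^j,W^j)$, letting $n\to\infty$ and then $J\to\infty$, and using $N=S=\tfrac14K$ on $\mathcal{N}$, we arrive at $\sum_{j\ge1}\alpha_j\le 4m$ and $\sum_{j\ge1}\beta_j=m$.

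Then comes the rigidity step, where the critical loss of compactness is overcome. Applying the vector Sobolev inequality $N\le C_*K^2$ to each profile gives $\beta_j\le C_*\alpha_j^2=\alpha_j^2/(16m)$, hence
\[
m=\sum_{j\ge1}\beta_j\le\frac1{16m}\sum_{j\ge1}\alpha_j^2\le\frac1{16m}\Bigl(\sum_{j\ge1}\alpha_j\Bigr)^2\le\frac{(4m)^2}{16m}=m,
\]
so every inequality is an equality. Equality in $\sum_j\alpha_j^2\le(\sum_j\alpha_j)^2$ with $\alpha_j\ge0$ forces all but one $\alpha_j$ to vanish, say $\alpha_1>0$; equality in the last step forces $\alpha_1=4m$, hence $K(R_n^J,T_n^J)\to0$, so together with $\|R_n^J\|_{L^4}+\|T_n^J\|_{L^4}\to0$ the remainder is negligible; and equality in the first step forces $\beta_1=\alpha_1^2/(16m)=m$. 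Therefore $K(V^1,W^1)=4m=4N(V^1,W^1)$, i.e. $(V^1,W^1)\in\mathcal{N}$ with $S(V^1,W^1)=m$, so $m$ is attained; by the second paragraph $(V^1,W^1)$ is a non-trivial solution of \eqref{SIST3} of minimal action, i.e. a ground state, whence $\mathcal{G}\neq\emptyset$. The main obstacle is precisely this rigidity: a minimising sequence might a priori split into several bubbles concentrating at different scales and centres plus a non-vanishing residue, and the convexity inequality $\sum a_j^2\le(\sum a_j)^2$ combined with the scale-invariant one-bubble Sobolev bound is what forbids this; secondarily, some care is needed for the Brezis--Lieb splitting of $N$ (chiefly the cross terms created by the coupling $\tfrac19\int P^3Q$) and for checking that $\mathcal{N}$ is a genuine $C^1$ natural constraint with vanishing multiplier.
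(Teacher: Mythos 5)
Your proof is correct, but it takes a genuinely different route from the paper. The paper constrains the minimization to the level set $N(u,w)=1$, constructs a sequence of probability Radon measures $\nu_m=\varphi(v_m,z_m)\,dx$, and runs the original Lions trichotomy: vanishing is excluded by renormalizing the concentration function to level $1/2$ via the dilation/translation invariance (Lemma \ref{lema310} and Step 1 of Theorem \ref{TEO311}), dichotomy is excluded with a localized Sobolev inequality (Corollary \ref{corol39}) and strict superadditivity of $\lambda\mapsto\lambda^{1/2}$, and the point-mass structure in the limit measure is then killed by the same renormalization; finally a Lagrange multiplier is computed and the minimizer is rescaled to solve \eqref{SIST3}. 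You instead minimize $S$ on the Nehari manifold $\{K=4N\}$ (which automatically builds in $N>0$, so you never need the paper's reduction to non-negative functions), invoke the $\dot H^1$ profile decomposition to split a minimizing sequence into bubbles plus a remainder small in $L^4$, and rule out both vanishing and multi-bubble splitting in one stroke by the superadditivity/rigidity chain $m=\sum\beta_j\le C_*\sum\alpha_j^2\le C_*(\sum\alpha_j)^2\le m$, forcing exactly one non-trivial profile; the vanishing Lagrange multiplier for the natural constraint then yields a solution, and $\mathcal{C}\subset\mathcal{N}$ upgrades it to a ground state. Each approach has a clean point: the paper's argument is elementary, measure-theoretic and self-contained, at the cost of the somewhat delicate localization lemma and the dilation trick to exclude vanishing and Dirac masses; yours is more conceptual and shorter, with the concentration--compactness difficulties packaged into the profile-decomposition theorem and the convexity inequality doing the rigidity work, but it imports a stronger black box (vector-valued $\dot H^1$ profile decomposition with Pythagorean and Br\'ezis--Lieb expansions, including for the non-diagonal quartic cross-term $\int P^3Q$, which you rightly flag as needing asymptotic-orthogonality care). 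One small point worth stating explicitly in your write-up: when exactly one $\alpha_{j_0}>0$ survives, all other $\alpha_j=0$ force $(V^j,W^j)=(0,0)$ and hence $\beta_j=0$, so the possible negativity of $N$ on a profile never actually threatens the chain $\sum\beta_j=m$.
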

To prove Theorem \ref{ESGS}, we shall employ the concentration-compactness method, introduced in \cite{lions2}, which says that any sequence of probability measures must have a subsequence such that either vanishing or dichotomy or compactness (see Lemma \ref{lema35}) must occur. As we will see, the ground states may be seen as minimizers of a normalized minimization problem. In order to find at least one minimizer of this problem we shall construct a suitable sequence of probability Radon measures. After some calculations, we will avoid  vanishing and dichotomy, implying in the compactness of the sequence.  Up to dilatation and translation such a sequence converges to the desired minimizer. As a consequence of Theorem \ref{ESGS}, we also obtain the optimal constant for the critical Sobolev inequality.

In what blow-up in finite time is concerned, our result reads as follows.
\begin{theorem}\label{blow4}
	Suppose $(u_0,w_0)\in H^1(\mathbb{R}^4)\times H^1(\mathbb{R}^4)$ and let $(u,w)$ be the corresponding solution of \eqref{SIST1} defined in the maximal time interval of existence, say, $I$. If $(u_0,w_0)$  is a pair of radially symmetric functions satisfying
	\begin{equation*}
		E(u_0,w_0)<\mathcal{E}(P,Q)
	\end{equation*}
	\begin{equation*}
		K(u_0,w_0)>K(P,Q),
	\end{equation*}
	where $(P,Q)$ is any ground state in $\mathcal{G}$ and $\mathcal{E}$ is the energy defined in \eqref{E}, then the time interval $I$ is finite.
\end{theorem}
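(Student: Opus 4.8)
The plan is to run the convexity (localized virial, à la Ogawa--Tsutsumi) method, adapted to radial data of possibly infinite variance. Suppose, for contradiction, that $I=(-T_\ast,T^\ast)$ with $T^\ast=+\infty$ (the case $T_\ast=+\infty$ is identical, or follows from the time‑reversal symmetry of \eqref{SIST1}). Fix a radial $\psi_R\in C^\infty_c(\R^4)$ with $\psi_R(x)=|x|^2$ for $|x|\le R$, $\psi_R$ constant for $|x|\ge 2R$, $0\le\psi_R''\le 2$, $\Delta\psi_R\le 8$ and $|\partial^\alpha\psi_R|\lesssim R^{2-|\alpha|}$, and set $\mathcal V_R(t):=\int\psi_R(x)\big(|u(t,x)|^2+3\sigma|w(t,x)|^2\big)\,dx$. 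By conservation of $M$ (see \eqref{M}) this is finite and nonnegative for every $t\in I$, and $\mathcal V_R'(0)=2\int\nabla\psi_R\cdot\big(\operatorname{Im}(\bar u_0\nabla u_0)+3\operatorname{Im}(\bar w_0\nabla w_0)\big)$ is finite; the weight $3\sigma$ is precisely the one that cancels the $\operatorname{Im}(\bar u^3w)$ contribution to the first derivative, which is what keeps the second derivative clean.

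Differentiating twice and using \eqref{SIST1} exactly as in the scalar energy‑critical case yields the identity $\mathcal V_R''(t)=8K(u(t),w(t))-32N(u(t),w(t))+\mathcal R_R(t)$, where the leading part coincides with the usual finite‑variance virial quantity because $N$ is quartic and critical in dimension $4$ (so the lower‑order terms $-u,-\mu w$ contribute nothing), and $\mathcal R_R(t)$ is supported in $\{|x|\ge R\}$. Since $\psi_R''\le 2$ everywhere and $\psi_R''\equiv2$ on $\{|x|\le R\}$, for radial functions the kinetic part of $\mathcal V_R''$ is bounded above by $8K(u,w)$, so
\[
\mathcal V_R''(t)\le 8K(u,w)-32N(u,w)+\mathcal R_R(t),\qquad |\mathcal R_R(t)|\lesssim R^{-2}M(u_0,w_0)+\int_{|x|\ge R}\!\!\big(|u|^4+|w|^4+|u|^2|w|^2+|u|^3|w|\big).
\]
Writing $N=\tfrac12K-\mathcal E$ with $\mathcal E$ the critical energy of \eqref{E}, and using $\mathcal E\le E$ (the difference $E-\mathcal E=\tfrac12\int(|u|^2+\mu|w|^2)$ is nonnegative) together with conservation of $E$, gives $\mathcal V_R''(t)\le -8K(u(t),w(t))+32E(u_0,w_0)+\mathcal R_R(t)$.

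The second ingredient is a trapping (invariant‑region) argument based on the sharp vector Sobolev inequality coming out of Theorem~\ref{ESGS}: there is $C_\ast>0$, attained exactly at ground states, with $N(f,g)\le C_\ast K(f,g)^2$ for every $(f,g)\in\dot H^1\times\dot H^1$. Testing \eqref{SIST3} against $(P,Q)$ gives the Pohozaev/Nehari relation $K(P,Q)=4N(P,Q)$, whence $C_\ast=N(P,Q)/K(P,Q)^2=\tfrac1{4K(P,Q)}$ and $\mathcal E(P,Q)=\tfrac14K(P,Q)$. Hence $\mathcal E(f,g)\ge h\big(K(f,g)\big)$ with $h(y):=\tfrac12 y-C_\ast y^2$, a concave parabola whose maximum is $h(K(P,Q))=\mathcal E(P,Q)$, attained at $y=K(P,Q)$. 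Along the flow, $h\big(K(u(t),w(t))\big)\le\mathcal E(u(t),w(t))\le E(u_0,w_0)<\mathcal E(P,Q)=\max h$; since $t\mapsto K(u(t),w(t))$ is continuous (as $(u,w)\in C(I;H^1\times H^1)$) and $K(u_0,w_0)>K(P,Q)$, the value $K(u(t),w(t))$ can never reach $K(P,Q)$, so $K(u(t),w(t))>K(P,Q)$ — indeed $K(u(t),w(t))\ge y_+$, the larger root of $h(y)=E(u_0,w_0)$ — for all $t\in I$.

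Finally, the radial Sobolev inequality $\|f\|_{L^\infty(|x|\ge R)}\lesssim R^{-3/2}\|f\|_{L^2}^{1/2}\|\nabla f\|_{L^2}^{1/2}$ in $\R^4$, combined with conservation of $M$ and Young's inequality, gives, for any $\varepsilon>0$, $|\mathcal R_R(t)|\le\varepsilon K(u(t),w(t))+C_\varepsilon R^{-6}+CR^{-2}M(u_0,w_0)$ (e.g. $\int_{|x|\ge R}|u|^4\lesssim R^{-3}\|u\|_{L^2}^3\|\nabla u\|_{L^2}\le\varepsilon\|\nabla u\|_{L^2}^2+C_\varepsilon R^{-6}$, and the other terms likewise). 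Therefore $\mathcal V_R''(t)\le -(8-\varepsilon)K(u(t),w(t))+32E(u_0,w_0)+o_R(1)$, and by the trapping step this is $\le -(8-\varepsilon)K(P,Q)+32E(u_0,w_0)+o_R(1)$. Since $32E(u_0,w_0)-8K(P,Q)=32\big(E(u_0,w_0)-\mathcal E(P,Q)\big)=:-\delta_0<0$ by hypothesis, first choosing $\varepsilon$ with $\varepsilon K(P,Q)<\delta_0/2$ and then $R$ large makes $\mathcal V_R''(t)\le-\delta_0/4$ for all $t\in[0,T^\ast)$; integrating twice, $\mathcal V_R(t)\le\mathcal V_R(0)+\mathcal V_R'(0)\,t-\tfrac{\delta_0}{8}t^2\to-\infty$ as $t\to\infty$, contradicting $\mathcal V_R\ge0$. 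Hence $T^\ast<\infty$ and, symmetrically, $T_\ast<\infty$, so $I$ is finite. The main obstacle is making the localized virial identity rigorous at $H^1$ regularity and controlling $\mathcal R_R$: one proves the identity for smooth, fast‑decaying data (where all the integrations by parts are legitimate) and then passes to the limit through the continuity of the data‑to‑solution map (Theorem~\ref{BCL}) and a density argument — the radial hypothesis is used only here, via the radial embedding that renders $\mathcal R_R$ negligible. A secondary delicate point is the trapping step, which needs the exact value of $C_\ast$ and the identity $\mathcal E(P,Q)=\tfrac14K(P,Q)$, both of which come out of the proof of Theorem~\ref{ESGS}.
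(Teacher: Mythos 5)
Your key idea is to choose the weight so that the first derivative of the virial quantity has no ``bad'' term, and then run the classical convexity argument. Unfortunately, this fails for general $\sigma$: the weight that makes $\mathcal V_R'$ clean is exactly the weight that ruins $\mathcal V_R''$.

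Concretely, with $\mathcal V_R=\int\psi_R\bigl(|u|^2+3\sigma|w|^2\bigr)$, you correctly compute that the $\operatorname{Im}(\bar u^3w)$ contribution to $\mathcal V_R'$ cancels, giving $\mathcal V_R'=2\operatorname{Im}\int\nabla\psi_R\cdot(\bar u\nabla u+3\,\bar w\nabla w)$. But now differentiate once more. The $w$-equation is $i\sigma w_t+\Delta w-\mu w+g=0$, i.e.\ $w_t=\tfrac{i}{\sigma}(\Delta w-\mu w+g)$; so the terms $3\bar w_t\nabla w$ and $3\bar w\nabla w_t$ each carry the factor $\tfrac{3}{\sigma}$, and the Morawetz-type computation gives, in place of the $u$-part
$4\sum\operatorname{Re}\int\partial_{jk}\psi_R\,\partial_j\bar u\,\partial_k u$, the $w$-part $\tfrac{3}{\sigma}\cdot 4\sum\operatorname{Re}\int\partial_{jk}\psi_R\,\partial_j\bar w\,\partial_k w$ (and similarly for the bi-Laplacian and nonlinear terms). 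Taking $\psi_R=|x|^2$ on $|x|\le R$, the leading part of $\mathcal V_R''$ is therefore $8\int|\nabla u|^2+\tfrac{24}{\sigma}\int|\nabla w|^2-(\cdots)$, which equals $8K$ only when $\sigma=3$. Thus your asserted identity $\mathcal V_R''=8K-32\mathcal P+\mathcal R_R$ is \emph{false} for $\sigma\ne 3$, and the whole convexity contradiction collapses: there is no way to relate the unequally-weighted kinetic term to $E$ and $K$ without further information. (For $\sigma=3$, $\mu=9$ this does work, and in fact it is precisely the paper's Section 5 argument, where moreover no radial cutoff is needed because $|x|^2\in L^2$-weight virial can be used directly.)

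This is exactly the obstruction the paper is circumventing. The paper does \emph{not} differentiate $\mathcal V$ twice; instead it defines $\mathcal R(t)=2\operatorname{Im}\int\nabla\phi\cdot(\bar u\nabla u+\sigma\,\bar w\nabla w)$ with weight $\sigma$ (not $3$). This is not $\mathcal V'$ for any virial weight, but it is the quantity whose derivative $\mathcal R'$ has the correct coefficient $1$ on both kinetic terms (the $\sigma$ in $\mathcal R$ exactly cancels the $1/\sigma$ produced by the $w$-equation). The price is that $\mathcal R$ is not the derivative of a nonnegative function, so one cannot simply integrate twice; instead the paper bounds $|\mathcal R(t)|\lesssim R\,M(u_0,w_0)^{1/2}K(u(t),w(t))^{1/2}$, uses $\mathcal R'\le-2\delta$ to infer $K(u(t),w(t))\gtrsim t^2$, feeds this back to get $\mathcal R'\le-4K$, and closes an ODE comparison argument. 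Your trapping step (via the concave function $h(y)=\tfrac12 y-C_{\rm opt}y^2$ and the sharp Sobolev constant) and your cutoff/radial-Sobolev estimates are fine and match the paper's Lemma \ref{lema delta} and the $\mathcal R_2,\mathcal R_3$ bounds in spirit; the missing piece is that you must abandon the ``differentiate $\mathcal V_R$ twice'' scheme and instead differentiate the weight-$\sigma$ functional $\mathcal R$ once, accepting the ensuing lower-bound/ODE bookkeeping to reach the contradiction.
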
 

As usual, in order to prove Theorem \ref{blow4}, we shall use the convexity method, which, roughly speaking, consists in deriving a contradiction by working with the ``virial function''
$$
\mathcal{V}(t)=\int \phi(x)(|u(t,x)|^2+\sigma^2|w(t,x)|^2)dx,
$$
for some suitable $\phi\in C^\infty(\mathbb{R}^4)$, and its derivative
\begin{equation}\label{V'}
	\mathcal{V}'(t)=2\hbox{Im}\int\nabla\phi\cdot(\nabla u\bar{u}+\sigma\nabla w\bar{w})dx-2\int\phi \hbox{Im}\left(\bar{u}f(u,w)+\sigma\bar{w}g(u,w)\right)dx.
\end{equation}
Here, $f(u,w)=\left(\frac{1}{9}|u|^2+2|w|^2\right)u+\frac{1}{3}\bar{u}^2w$ and $g(u,w)=(9|w|^2+2|u|^2)w+\frac{1}{9}u^3$.

Notice that the second term in \eqref{V'} does not  necessarily vanish (unless $\sigma=3$), which brings some difficulties in order to apply the method. To overcome this, we shall use a modification of the method as in \cite{inui}, which consists in dropping the second term in \eqref{V'} and work with radially symmetric solutions. Thus we work with the function
\begin{equation*}
	\mathcal{R}(t)=2\hbox{Im}\int\nabla\phi\cdot(\nabla u\bar{u}+\sigma\nabla w\bar{w})dx
\end{equation*}
instead of the usual $\mathcal{V}'$.

\begin{remark}
An initial data satisfying the assumptions of Theorem \ref{blow4} may be constructed as follows: fix a pair $(u_0,w_0)$ of smooth radially symmetric functions satisfying $\hbox{Re}(\bar{u}_0^3w_0)>0$ and define
$$
(u_{0\lambda},w_{0\lambda})=\lambda(u_0,w_0), \quad \lambda>0.
$$
It is easily seen that
$$
E(u_{0\lambda},w_{0\lambda})=\lambda^2 a-\lambda^4b
$$
and
$$
K	(u_{0\lambda},w_{0\lambda})=\lambda^2 K(u_0,w_0),
$$
where $a$ and $b$ are positive constants depending on $(u_0,w_0)$.
Therefore, if $\lambda$ is large enough then $	(u_{0\lambda},w_{0\lambda})$ satisfies the assumptions of Theorem 1.4.
\end{remark}

As we already pointed out, the global well-posedness (and scattering) of \eqref{NLS} for arbitrary initial data in $\dot{H}^1(\R^4)$ was established in \cite{D} under the assumptions  $\tilde{E}(u_0)<\tilde{E}(W)$ and $\|\nabla u_0\|_{L^2}<\|\nabla W\|_{L^2}$. Therefore, we expect that if the initial data satisfies the assumptions $E(u_0,w_0)<\mathcal{E}(P,Q)$ and $K(u_0,w_0)<K(P,Q)$, then the solution of \eqref{SIST1} must be global in time (at least under the assumption of resonance $\mu=3\sigma$). This is currently under investigation.

The paper is organized as follows. In Section \ref{notation} we just recall some basic notation and two useful lemmas. The existence of ground states solutions is proved in Section \ref{grounds}. In Section \ref{blowup} we establish the existence of blow-up solutions by proving Theorem \ref{blow4}. Finally, in Section  \ref{appendix} we establish the existence of blowing-up solutions in the case of resonance but without the assumption of  radial initial data.

\section{Notation and two useful lemmas}\label{notation}
Throughout the work we will use the standard notation in PDEs. In particular, $C$ will represent a generic constant which may vary from inequality to inequality. Given two positive number $a$ and $b$, we write $a\lesssim b$ whenever  $a\leq Cb$ for some constant $C>0$; similar for the case $a\gtrsim b$. For a complex number $z\in \mathbb{C}$, Re$\,z$ and Im$\,z$ represents its real and imaginary parts.  Also, $\bar{z}$ denotes the complex conjugate of $z$. 

Given two real parameters $s\in\R$ and $1\leq p\leq \infty$, we denote the standard Sobolev, the homogeneous Sobolev and the Lebesgue spaces by  $H^{s,p}=H^{s,p}(\mathbb{R}^d)$, $\dot{H}^{s,p}=\dot{H}^{s,p}(\R^d)$ and $L^p=L^p(\mathbb{R}^d)$, respectively, with their usual norms. As usual, in the case $p=2$, we simplify nations by setting  $H^s=H^{s,2}$ and $\dot{H}^s=\dot{H}^{s,2}$. Given a time interval $I$ and a Banach space $X$,  the space $L^p(I;X)$  will be endowed with the norm
$$
\Vert f \Vert_{L_t^pX}=\left\| \left\|f(t)\|_X\right\|\right\|_{L^p(I)}.
$$
Integration of a function $f$ on $\R^4$ will be denoted by $\int fdx$ or simply $\int f$, if this will not cause a confusion.

Now, we present an adapted version of the generalized Brezis-Lieb Lemma (see \cite[Theorem 2]{BL}). Let $f:\mathbb{R}^l\rightarrow\mathbb{R}$ be a continuous function such that $f(0,...,0)=0$ and for all $a,b\in\mathbb{R}^l$ and $\epsilon>0$,
\begin{equation*}
	|f(a+b)-f(b)|\leq \epsilon\zeta(a)+\psi_\epsilon(b),
\end{equation*}
where $\zeta$ and $\psi_\epsilon$ are non-negative functions.
\begin{lemma}\label{BreLie} 
	Let $v_m=u_m-u$ be a sequence of measurable functions from $\mathbb{R}^d\rightarrow\mathbb{R}^l$ such that
	\begin{itemize}
		\item[(i)] $v_m\rightarrow 0$ a.e.;
		\item[(ii)] $f(u)\in L^1(\mathbb{R}^d)$;
		\item[(iii)] $\int \zeta(v_m)(x)dx\leq M<\infty$, for some constant $M$, independent of $m$;
		\item[(iv)] $\int \psi_\epsilon(u)(x)dx<\infty$, for any $\epsilon>0$.
	\end{itemize}
	Then, as $m\rightarrow\infty$,
	$$
	\int_{\R^d} |f(u_m)-f(v_m)-f(u)|dx\rightarrow 0.
	$$
\end{lemma}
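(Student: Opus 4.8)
The plan is to run the classical truncation argument of Brezis and Lieb \cite{BL}, whose mechanism is to pair the pointwise ``approximate additivity'' of $f$ (encoded in the structural hypothesis) with the dominated convergence theorem after subtracting off an $\epsilon$-small error term.

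First I would fix $\epsilon>0$ and record a pointwise bound for the integrand. Writing $u_m=v_m+u$, the triangle inequality together with the structural hypothesis (using $v_m$ as the ``small'' argument) and $f(0)=0$ yields, for some constant $C$,
$$
\bigl|f(u_m)(x)-f(v_m)(x)-f(u)(x)\bigr|\ \le\ C\epsilon\,\zeta(v_m)(x)+\psi_\epsilon(u)(x)+\bigl|f(u)(x)\bigr| .
$$
Motivated by this, I introduce the truncated sequence
$$
F_{m,\epsilon}(x):=\Bigl(\,\bigl|f(u_m)(x)-f(v_m)(x)-f(u)(x)\bigr|-C\epsilon\,\zeta(v_m)(x)\,\Bigr)^{+}\ \ge\ 0 ,
$$
so that $0\le F_{m,\epsilon}\le \psi_\epsilon(u)+|f(u)|$ by the displayed bound, and the majorant $\psi_\epsilon(u)+|f(u)|$ lies in $L^1(\mathbb{R}^d)$ by hypotheses (ii) and (iv) --- crucially, this majorant does not depend on $m$.

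Next I would verify that $F_{m,\epsilon}\to0$ almost everywhere as $m\to\infty$. Since $v_m\to0$ a.e., continuity of $f$ and $f(0)=0$ give $f(v_m)\to0$ a.e.; likewise $u_m=v_m+u\to u$ a.e.\ and continuity give $f(u_m)\to f(u)$ a.e.; hence $|f(u_m)-f(v_m)-f(u)|\to0$ a.e., and therefore $F_{m,\epsilon}\to0$ a.e. As $F_{m,\epsilon}$ is dominated by the fixed $L^1$ function $\psi_\epsilon(u)+|f(u)|$, the dominated convergence theorem gives $\int_{\mathbb{R}^d}F_{m,\epsilon}\,dx\to0$ as $m\to\infty$.

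It remains to assemble the pieces. From $|f(u_m)-f(v_m)-f(u)|\le F_{m,\epsilon}+C\epsilon\,\zeta(v_m)$ and hypothesis (iii),
$$
\int_{\mathbb{R}^d}\bigl|f(u_m)-f(v_m)-f(u)\bigr|\,dx\ \le\ \int_{\mathbb{R}^d}F_{m,\epsilon}\,dx\ +\ C\epsilon\int_{\mathbb{R}^d}\zeta(v_m)\,dx\ \le\ \int_{\mathbb{R}^d}F_{m,\epsilon}\,dx\ +\ CM\epsilon ,
$$
and letting $m\to\infty$ we obtain $\limsup_{m\to\infty}\int_{\mathbb{R}^d}|f(u_m)-f(v_m)-f(u)|\,dx\le CM\epsilon$. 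Since $\epsilon>0$ is arbitrary while $C$ and $M$ do not depend on $\epsilon$, the $\limsup$ equals $0$, which is exactly the claim. The one genuinely delicate point --- and the step I would set up most carefully --- is the truncation: $|f(u_m)-f(v_m)-f(u)|$ itself need not admit any fixed integrable majorant, since $\zeta(v_m)$ is controlled only in the integrated sense (iii) and need not decay pointwise; hence the $\epsilon$-error must be excised \emph{before} passing to the limit in $m$, and only then is $\epsilon$ sent to $0$. Everything else is routine, and the argument is insensitive to the dimensions $d$ and $l$.
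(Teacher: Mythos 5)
Your proof is correct and takes exactly the route the paper relies on: the paper simply cites \cite[Theorem 2]{BL} and notes that the complex-valued proof carries over to the vector-valued setting, and what you have written out is precisely that Brezis--Lieb truncation argument, with the $\epsilon$-excision $F_{m,\epsilon}=\bigl(|f(u_m)-f(v_m)-f(u)|-C\epsilon\,\zeta(v_m)\bigr)^{+}$ followed by dominated convergence in $m$ and then $\epsilon\to0$. One small bookkeeping remark: with the paper's structural inequality read literally (applied with $a=v_m$, $b=u$, and again with $a=v_m$, $b=0$ using $f(0)=0$) the fixed majorant comes out as $\psi_\epsilon(u)+\psi_\epsilon(0)$ rather than $\psi_\epsilon(u)+|f(u)|$; either version is integrable by (ii) and (iv) (the former requiring $\psi_\epsilon(0)=0$, which is implicit), so the dominated-convergence step and the conclusion are unaffected.
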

\begin{proof}
The proof of this result was established in \cite[Theorem 2]{BL} for complex-valued functions. However, an inspection in the proof reveals it also holds for vector-valued functions as stated above.
\end{proof}

Next lemma will be used to show the  blow-up in finite time according to Theorem \ref{blow4}.

\begin{lemma}\label{lema comp1}
	Let $I\subset\mathbb{R}$ be an open interval with $0\in I$, $a\in\R$, $b>0$ and $q>1$. Define $\gamma=(bq)^{-1/(q-1)}$ and $f(r)=a-r+br^q$, for $r>0$. Let $G(t)$ be a non-negative continuous function such that $f\circ G\geq 0$ in $I$. By assuming that $a<\displaystyle\left(1-\displaystyle\frac{1}{q}\displaystyle\right)\gamma$, we have
	\begin{itemize}
		\item[(i)] If $G(0)<\gamma$ then $G(t)<\gamma$, for all $t\in I$;
		\item[(ii)] If $G(0)>\gamma$ then $G(t)>\gamma$, for all $t\in I$.
	\end{itemize}
\end{lemma}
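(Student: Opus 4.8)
The plan is to turn the statement into an elementary connectedness argument, after first pinning down the shape of the graph of $f$ on $(0,\infty)$. First I would differentiate: $f'(r)=-1+bq\,r^{q-1}$, which vanishes only at $r=\gamma=(bq)^{-1/(q-1)}$, and $f''(r)=bq(q-1)r^{q-2}>0$ since $q>1$, so $f$ is strictly convex on $(0,\infty)$ and $r=\gamma$ is its unique point of global minimum. Using $\gamma^{q-1}=(bq)^{-1}$ one gets $b\gamma^{q}=b\gamma\cdot\gamma^{q-1}=\gamma/q$, hence $f(\gamma)=a-\gamma+\gamma/q=a-(1-\tfrac1q)\gamma$. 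Thus the hypothesis $a<(1-\tfrac1q)\gamma$ is nothing but the statement $f(\gamma)<0$.

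Next I would describe the superlevel set $\{r>0:f(r)\ge 0\}$. Since $f$ is continuous and strictly convex on $(0,\infty)$, is negative at its minimum $\gamma$, and satisfies $f(r)\to+\infty$ as $r\to\infty$, the set $\{r>0:f(r)<0\}$ is a nonempty bounded open interval $(r_1,r_2)$ with $0\le r_1<\gamma<r_2<\infty$. Consequently $\{r>0:f(r)\ge 0\}=(0,r_1]\cup[r_2,\infty)$, a disjoint union of two sets lying at positive distance $r_2-r_1$ from one another (the case $r_1=0$, i.e.\ $a\le 0$, simply means the first piece is empty, which does not affect anything). Now, because $f\circ G\ge0$ is assumed on $I$, every $G(t)$ lies in the domain of $f$, so $G>0$ on $I$ and in fact $G(t)\in(0,r_1]\cup[r_2,\infty)$ for all $t\in I$; hence $G(I)\subseteq(0,r_1]\cup[r_2,\infty)$. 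Since $G$ is continuous and $I$ is an interval, $G(I)$ is connected, and therefore it must be contained entirely in one of the two pieces.

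To conclude I would just test which piece contains $G(0)$. In case (i), $G(0)<\gamma<r_2$ rules out $G(I)\subseteq[r_2,\infty)$, so $G(I)\subseteq(0,r_1]\subseteq(0,\gamma)$, giving $G(t)<\gamma$ for all $t\in I$; in case (ii), $G(0)>\gamma>r_1$ rules out $G(I)\subseteq(0,r_1]$, so $G(I)\subseteq[r_2,\infty)\subseteq(\gamma,\infty)$, giving $G(t)>\gamma$ for all $t\in I$. There is no real obstacle here: the argument is a one-step application of connectedness of $G(I)$, and the only points needing a little care are the short computation identifying the hypothesis with $f(\gamma)<0$ and the harmless bookkeeping around whether $r_1=0$ (equivalently $a\le 0$), in which case the relevant branch of (i) is vacuous.
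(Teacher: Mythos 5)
Your proof is correct, and it is essentially the same continuity argument the paper points to (it only cites \cite{pastor2} and \cite{B} rather than spelling it out): the key point in both is that $a<(1-\tfrac1q)\gamma$ forces $f(\gamma)<0$, so the constraint $f\circ G\ge 0$ confines the connected set $G(I)$ to one of the two components of $\{f\ge0\}$, which lie on opposite sides of $\gamma$. Your explicit description of $\{f\ge0\}=(0,r_1]\cup[r_2,\infty)$ via strict convexity is a clean way to package the standard intermediate-value reasoning; the only slightly glib step is the remark that $f\circ G\ge0$ forces $G>0$ (if $a>0$, $G$ could touch $0$ without contradiction), but this is harmless since $0$ still lies on the same side of $\gamma$ as $r_1$ and the connectedness argument goes through unchanged.
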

\begin{proof}
The proof follows essentially by continuity. The interested reader will find the details, for instance, in \cite[Lemma 3.1]{pastor2} or \cite[Lemma 5.2]{B}.
\end{proof}

\section{Existence of a  ground state solution}\label{grounds}
This section is devoted to prove the existence of ground state solutions as stated in Theorem \ref{ESGS}. As mentioned before, we will follow the ideas in \cite{pastor3}.

\subsection{Critical Sobolev-type inequality and some remarks}
Let us start with some properties of the solutions of \eqref{SIST3}. The first result states that the function $N$, defined in \eqref{K&N},  must be positive when evaluated at a pair $(P,Q)$ of non-trivial solutions of \eqref{SIST3}.

\begin{lemma}\label{lema31}
	Let $\mathcal{N}:=\{(P,Q)\in \dot{H}^1(\mathbb{R}^4)\times\dot{H}^1(\mathbb{R}^4);\,\, N(P,Q)>0\}$. If $\mathcal{C}$ denotes the set of all non-trivial solutions of  \eqref{SIST3} then  $\mathcal{C}\subset \mathcal{N}$. 
\end{lemma}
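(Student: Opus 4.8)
The plan is to show that any non-trivial weak solution $(P,Q)$ of \eqref{SIST3} necessarily satisfies $N(P,Q)>0$, by combining the Pohozaev-type identity recalled from \cite[Lemma 2.2]{oliveira} with the weak-form equations \eqref{solfraca}. First I would test the first equation in \eqref{solfraca} with $f=P$ and the second with $g=Q$ and add them; this yields the ``Nehari-type'' identity
\begin{equation*}
	K(P,Q)=\int P\nabla P\cdot\text{(stuff)}\cdots = 4N(P,Q)+\text{(a correction from the cubic coupling term)}.
\end{equation*}
More carefully, the right-hand side of the sum is
$$
\int\left(\tfrac{1}{9}P^4+2Q^2P^2+\tfrac{1}{3}P^3Q\right)+\int\left(9Q^4+2P^2Q^2+\tfrac{1}{9}P^3Q\right),
$$
which I will reorganize in terms of $N(P,Q)=\int \tfrac{1}{36}P^4+\tfrac{9}{4}Q^4+P^2Q^2+\tfrac{1}{9}P^3Q$. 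Matching coefficients, the quartic ``pure'' terms give exactly $4\int(\tfrac1{36}P^4+\tfrac94Q^4+P^2Q^2)$, while the coupling term $\tfrac13 P^3Q+\tfrac19P^3Q=\tfrac49 P^3Q$ gives exactly $4\cdot\tfrac19\int P^3Q$; hence the Nehari identity is the clean statement $K(P,Q)=4N(P,Q)$.

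Next I would bring in the $d=4$ Pohozaev identity, which (as recalled in the excerpt, specialized to $d=4$, $\omega=-1$, $\mu=3\sigma$) reduces to no constraint on $K$ — indeed for $d=4$ the identity $(d-4)\int(|\nabla P|^2+|\nabla Q|^2)+\cdots=0$ is automatically satisfied, so Pohozaev alone gives nothing. Therefore the key is instead the \emph{scaling} identity: testing against the dilation generator, i.e. using $f=x\cdot\nabla P$, $g=x\cdot\nabla Q$ (equivalently differentiating $S(P(\cdot/\lambda)\,\lambda^{(d-2)/2},\dots)$ — in $d=4$ the relevant rescaling is $P_\lambda=\lambda P(\lambda x)$, under which $K$ is scale-invariant and $N$ scales like $\lambda^0$ as well), one obtains a second relation. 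Actually in the energy-critical dimension both $K$ and $N$ are scale invariant under $P_\lambda(x)=\lambda P(\lambda x)$, so the ``scaling derivative'' of $S$ along this family vanishes identically and again gives no new information. Thus the cleanest route is: from $K(P,Q)=4N(P,Q)$ and the fact that $K(P,Q)>0$ for a non-trivial pair (if $K(P,Q)=0$ then $P,Q$ are constants in $\dot H^1(\mathbb R^4)$, hence zero a.e., contradicting non-triviality), we immediately conclude $N(P,Q)=\tfrac14 K(P,Q)>0$, so $(P,Q)\in\mathcal N$ and $\mathcal C\subset\mathcal N$.

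The step I expect to be the main obstacle is justifying rigorously that pairing the weak equations \eqref{solfraca} with $f=P$ and $g=Q$ is legitimate — i.e. that $P,Q$ are themselves admissible test functions in $\dot H^1(\mathbb R^4)$ and that all the quartic integrals converge. This requires an a priori integrability input: by Sobolev embedding $\dot H^1(\mathbb R^4)\hookrightarrow L^4(\mathbb R^4)$, every term $\int P^4, \int Q^4, \int P^2Q^2, \int P^3Q$ is finite by Hölder, so $P$ and $Q$ are indeed valid test functions and no truncation/density argument beyond the standard one is needed; I would spell this out via Hölder's inequality $\int|P|^3|Q|\le\|P\|_{L^4}^3\|Q\|_{L^4}<\infty$. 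One should also record that $K(P,Q)=0$ forces $(P,Q)=(0,0)$ in $\dot H^1$, which is where ``non-trivial'' is used. Modulo these routine verifications, the proof is the one-line identity $K=4N$ followed by positivity of $K$ on $\mathcal C\setminus\{0\}$.
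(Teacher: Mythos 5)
Your proof is correct and follows essentially the same route as the paper: test the weak formulation \eqref{solfraca} with $(f,g)=(P,Q)$, add the two resulting identities, match coefficients to obtain $K(P,Q)=4N(P,Q)$, and conclude $N(P,Q)>0$ because a non-trivial pair in $\dot H^1(\mathbb R^4)\times\dot H^1(\mathbb R^4)$ has $K(P,Q)>0$. The detours through the Pohozaev identity and scaling (which you correctly note are vacuous in the energy-critical case) and the explicit check, via Sobolev embedding and H\"older, that $P,Q$ are admissible test functions are sensible added justification but not a different argument.
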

\begin{proof}
Assume $(P,Q)\in\mathcal{C}$. By inserting $(f,g)=(P,Q)$ in \eqref{solfraca} we obtain
	$$
	\int |\nabla P|^2=\int \frac{1}{9}P^4+2Q^2P^2+\frac{1}{3}P^3Q,
	$$
	and
	$$
	\int |\nabla Q|^2=\int 9Q^4+2Q^2P^2+\frac{1}{9}P^3Q.
	$$
	By summing both equations, we get
	\begin{equation}\label{36}
		K(P,Q)=\int\frac{1}{9}P^4+9Q^4+4Q^2P^2+\frac{4}{9}P^3Q=4N(P,Q).
	\end{equation}
	Since $(P,Q)$ is non-trivial, it follows that $N(P,Q)>0$ and $(P,Q)\in\mathcal{N}$ as desired.
\end{proof}

Let us introduce the functional
\begin{equation*}
	J(P,Q):=\frac{K(P,Q)^2}{N(P,Q)}, \quad (P,Q)\in\mathcal{N}.
\end{equation*}

\begin{remark}\label{obs32}
(i) Since solutions of \eqref{SIST3} do not belong to $L^2$, the associated ``energy functional'' now reads as
		\begin{equation}\label{E}
			\mathcal{E}(u,w):=\frac{1}{2}K(u,w)-N(u,w),\,\, (u,w)\in \dot{H}^1(\mathbb{R}^4)\times\dot{H}^1(\mathbb{R}^4).
		\end{equation}
In particular, the energy now  coincides with the action function, that is, $\mathcal{E}(P,Q)=S(P,Q).$ 
		
	(ii) Observe that, using \eqref{36}, 
		$$
		S(P,Q)=\frac{1}{2}K(P,Q)-N(P,Q)=N(P,Q).
		$$
		Moreover, 
		$$
		J(P,Q)=\frac{K(P,Q)^2}{N(P,Q)}=16N(P,Q)=16S(P,Q).
		$$
	Hence, a non-trivial solution of \eqref{SIST3} is a ground state if, and only if, it has least energy $\mathcal{E}$ among all solutions if, and only if, it minimizes $J$.
\end{remark}

From now on in this section, we will assume that  $u$ and $w$ are real-valued functions. We start by noticing that from  Hölder and Young's inequality 
\begin{equation*}\label{D1}
	\int u^3w \lesssim\Vert u\Vert_{L^4}^4+\Vert w\Vert_{L^4}^4 \quad \mbox{and}\quad  	\int u^2w^2\lesssim\Vert u\Vert_{L^4}^4+\Vert w\Vert_{L^4}^4,
\end{equation*}
which gives $N(u,w)\lesssim\Vert u\Vert_{L^4}^4+\Vert w\Vert_{L^4}^4$. So, using Sobolev's inequality
$
\Vert f\Vert_{L^4}^4\lesssim\Vert \nabla f\Vert_{L^2}^4
$
we get the ``critical inequality''
\begin{equation}\label{N}
	\begin{split}
		N(u,w)\leq C K(u,w)^2.
	\end{split}
\end{equation}
for some positive constant $C$. Concerned with the optimal constant we can place in \eqref{N}, for functions $(u,w)\in\mathcal{N}$, we see that it is given by
\begin{equation}\label{Min}
	C_{\rm opt}^{-1}:=\inf\{J(u,w);\,(u,w)\in\mathcal{N}\},
\end{equation} 

In view of Remark \ref{obs32}, in order to show the existence of a ground state solution, it suffices to show  this infimum is attained. To this end,  we will consider the normalized version of the problem as follows
\begin{equation}\label{MinNorm}
	I:=\inf\{K(u,w);\, (u,w)\in\mathcal{N},\,\, N(u,w)=1\}.
\end{equation}

\begin{definition}
	A minimizing sequence for \eqref{Min} is a sequence $(u_m,w_m)$ in $\mathcal{N}$ such that\- $J(u_m,w_m)\rightarrow	C_{\rm opt}^{-1}$. In the same way, a minimizing sequence for \eqref{MinNorm} is a sequence $(u_m,w_m)$ in $\mathcal{N}$ such that $N(u_m,w_m)=1$, for all $m$, and $K(u_m,w_m)\rightarrow I$.
\end{definition}

Next, since
$
|\nabla |u||^2\leq |\nabla u|^2
$
and 
  $|\nabla |w||^2\leq |\nabla w|^2$ a.e. (see, for instance, \cite[Theorem 6.17]{lieb}), we deduce $K(|u|,|w|)\leq K(u,w)$. Moreover, 
$$
N(u,w)\leq \int \frac{1}{36}|u|^4+\frac{9}{4}|w|^4+|u|^2|w|^2+\frac{1}{9}|u|^3|w|=N(|u|,|w|).
$$
This implies we may assume, with no loss of generality, that minimizing sequences for both \eqref{Min} and \eqref{MinNorm} are always non-negative.
\begin{lemma}\label{equi}
We have  $C_{\rm opt}=I^{-2}$.
\end{lemma}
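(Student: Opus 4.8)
The plan is to establish the two inequalities $C_{\rm opt}\le I^{-2}$ and $C_{\rm opt}\ge I^{-2}$ by comparing the two minimization problems \eqref{Min} and \eqref{MinNorm} through scaling. The key observation is that the functional $J(u,w)=K(u,w)^2/N(u,w)$ is invariant under the scaling $(u,w)\mapsto(\lambda u,\lambda w)$ for $\lambda>0$, since $K$ is homogeneous of degree $2$ and $N$ is homogeneous of degree $4$; hence $K^2$ and $N$ both scale by $\lambda^4$. This means that in computing the infimum defining $C_{\rm opt}^{-1}$ we may restrict to pairs normalized by $N(u,w)=1$, on which $J(u,w)=K(u,w)^2$.

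First I would prove $C_{\rm opt}^{-1}\ge I^2$: given any $(u,w)\in\mathcal{N}$, set $\lambda=N(u,w)^{-1/4}>0$ and $(\tilde u,\tilde w)=(\lambda u,\lambda w)$, so that $N(\tilde u,\tilde w)=1$ and $(\tilde u,\tilde w)\in\mathcal{N}$; then
\begin{equation*}
	J(u,w)=J(\tilde u,\tilde w)=K(\tilde u,\tilde w)^2\ge I^2,
\end{equation*}
using the definition \eqref{MinNorm} of $I$. Taking the infimum over $(u,w)\in\mathcal{N}$ gives $C_{\rm opt}^{-1}\ge I^2$. Conversely, to prove $C_{\rm opt}^{-1}\le I^2$, I would take a minimizing sequence $(u_m,w_m)\in\mathcal{N}$ with $N(u_m,w_m)=1$ and $K(u_m,w_m)\to I$; then $J(u_m,w_m)=K(u_m,w_m)^2\to I^2$, and since each $(u_m,w_m)\in\mathcal{N}$ we get $C_{\rm opt}^{-1}\le I^2$. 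Combining the two inequalities yields $C_{\rm opt}^{-1}=I^2$, i.e. $C_{\rm opt}=I^{-2}$.

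There is essentially no hard part here: the only thing to be careful about is that $I>0$, so that $I^{-2}$ makes sense and the two infima are genuinely reciprocal rather than degenerate. Positivity of $I$ follows from the critical inequality \eqref{N}: for $(u,w)\in\mathcal{N}$ with $N(u,w)=1$ we have $1=N(u,w)\le C\,K(u,w)^2$, so $K(u,w)\ge C^{-1/2}>0$, whence $I\ge C^{-1/2}>0$. One should also note that $\mathcal{N}$ is scaling-invariant (if $N(u,w)>0$ then $N(\lambda u,\lambda w)=\lambda^4 N(u,w)>0$), which legitimizes the normalization step. With these routine remarks in place the identity $C_{\rm opt}=I^{-2}$ follows immediately from the homogeneity of $K$ and $N$.
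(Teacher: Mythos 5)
Your proof is correct and follows essentially the same route as the paper: both arguments rest on the scale invariance of $J$ (since $K$ and $N$ are homogeneous of degrees $2$ and $4$) and the normalization $(u,w)\mapsto N(u,w)^{-1/4}(u,w)$ to pass between the two infima. The extra remark verifying $I>0$ via \eqref{N} is a sensible addition that the paper leaves implicit.
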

 \begin{proof}
	Indeed, set $A:=\{(u,w)\in\mathcal{N},\, N(u,w)=1 \}$. Then, for any $(u,w)\in A$ we have $J(u,w)=K(u,w)^2$ and, hence,  $C_{\rm opt}^{-1}\leq K(u,w)^2$ or equivalently $C_{\rm opt}^{-1/2}\leq K(u,w)$, for all $(u,w)\in A$.This means $C_{\rm opt}^{-1/2}$ is a lower bound for the set $\{K(u,w);\, (u,w)\in \mathcal{N},\, N(u,w)=1\}$. Therefore, $C_{\rm opt}^{-1/2}\leq I$, i.e.,  $I^{-2}\leq C_{\rm opt}$. On the other hand, since $N$ and $K$ are homogeneous of degree 4 and 2, respectively, we have that $J(\lambda u,\lambda w)=J(u,w)$, for any $\lambda>0$. Now, given $\epsilon>0$, let $(u,w)\in\mathcal{N}$ be such that $J(u,w)<C_{\rm opt}^{-1}+\epsilon$ and set $(\tilde{u},\tilde{w}):=N(u,w)^{-1/4}(u,w)$. Then  $N(\tilde{u},\tilde{w})=1$, $J(\tilde{u}, \tilde{w})=J(u,w)$ and 
$$I^2\leq  K(\tilde{u},\tilde{w})^2=J(\tilde{u},\tilde{w})=J(u,w)<C_{\rm opt}^{-1}+\epsilon.$$ 
Since $\epsilon>0$ is arbitrary, we get $C_{\rm opt}\leq I^{-2}$ and the proof is completed.
\end{proof}

\begin{remark}\label{obs34i}
 From the above lemma, \eqref{N} becomes
	\begin{equation}\label{DSCN}
		N(u,w)\leq I^{-2}K(u,w)^2,\quad  (u,w)\in\mathcal{N}.
	\end{equation}
	In addition, if $(u,w)$ is a minimizer for \eqref{MinNorm}, then $K(u,w)=I$ and $N(u,w)=1$, so 
	$$
	J(u,w)=\frac{K(u,w)^2}{N(u,w)}=I^2=C_{\rm opt}^{-1}.
	$$
	Thus, $(u,w)$ is also a minimizer for \eqref{Min}. 
\end{remark}

Before proceeding, it is convenient to set the function
$$\varphi(u,w):= \frac{1}{36}u^4+\frac{9}{4}w^4+u^2w^2+\frac{1}{9}u^3w.$$
Notice that $\varphi$ is homogeneous of degree 4. Also,  for $R>0$ and $y\in\mathbb{R}^4$, by defining  the function $u^{R,y}:=R^{-1}u(R^{-1}(x-y))$, we see that
$N(u^{R,y},w^{R,y})=N(u,w)$ and $K(u^{R,y},w^{R,y})=K(u,w)$.
Thus, the functionals $K$ and $N$ are invariant under the transformation 
\begin{equation}\label{obs34ii}
	(u,w)\mapsto(u^{R,y},w^{R,y})=(R^{-1}u(R^{-1}(x-y)),R^{-1}w(R^{-1}(x-y))).
\end{equation}

We end this section with a localized version of Sobolev's inequality. In what follows, by $B(x,R)$ we denote the ball in $\R^4$ with radius $R>0$ centered at $x$. We first state the following.

\begin{lemma}\label{lema38}
Given any $\delta>0$, we may found a constant $C(\delta)>0$ with the following property: if $0<r<R$ are real constants with $r/R<C(\delta)$ and $x\in \mathbb{R}^4$, then there is a cut-off function $\chi_R^r\in H^{1,\infty}(\mathbb{R}^4)$ such that $\chi_R^r=1$ on $B(x,r)$, $\chi_R^r=0$ outside $B(x,R)$ and
	\begin{equation*}
		K(\chi_R^ru,\chi_R^rw)\leq \int_{B(x,R)}\left(|\nabla u|^2+|\nabla w|^2\right)dy+\delta K(u,w),
	\end{equation*}
	and
	\begin{equation*}
		K((1-\chi_R^r)u,(1-\chi_R^r)w)\leq \int_{\mathbb{R}^4\backslash B(x,r)}\left(|\nabla u|^2+|\nabla w|^2\right) dy+\delta K(u,w),
	\end{equation*}
	for any $(u,w)\in \dot{H}^1(\mathbb{R}^4)\times\dot{H}^1(\mathbb{R}^4)$.
\end{lemma}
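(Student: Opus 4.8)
The plan is to construct $\chi_R^r$ as a radial function of the distance to $x$ that interpolates between $1$ on $B(x,r)$ and $0$ outside $B(x,R)$, and to control the error terms by exploiting the scaling structure of $\dot H^1(\mathbb R^4)$. First I would set $\chi_R^r(y) = \eta\bigl(\tfrac{|y-x|}{R}\bigr)$ for a fixed smooth cut-off $\eta:[0,\infty)\to[0,1]$ with $\eta \equiv 1$ on $[0,\theta]$ and $\eta\equiv 0$ on $[1,\infty)$, where $\theta = r/R$. This immediately gives $\chi_R^r = 1$ on $B(x,r)$, $\chi_R^r=0$ outside $B(x,R)$, and $\|\nabla\chi_R^r\|_{L^\infty}\lesssim \frac{1}{R(1-\theta)}$, but the point will be to choose $\eta$ more carefully so that the gradient is spread over the annulus in a way compatible with a critical Sobolev estimate. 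Concretely, I would take $\eta$ so that $|\nabla\chi_R^r(y)| \lesssim \frac{1}{|y-x|}$ on the annulus $r<|y-x|<R$, which is the natural scale-invariant choice in $\mathbb R^4$ (this is where the smallness of $r/R$ will enter as $\log(R/r)$).

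The core estimate is then an expansion: $\nabla(\chi_R^r u) = \chi_R^r\nabla u + u\nabla\chi_R^r$, so
\begin{equation*}
	\int |\nabla(\chi_R^r u)|^2 \le \int_{B(x,R)} |\nabla u|^2 + 2\int \chi_R^r u\, \nabla u\cdot\nabla\chi_R^r + \int |u|^2|\nabla\chi_R^r|^2,
\end{equation*}
and similarly for $w$. Summing the two, the leading term is exactly $\int_{B(x,R)}(|\nabla u|^2 + |\nabla w|^2)$, and it remains to absorb the cross term and the $|u|^2|\nabla\chi_R^r|^2$ term into $\delta K(u,w)$. For the last term I would use Hölder with exponents $2$ and $2$ on the annulus $A = B(x,R)\setminus B(x,r)$: $\int_A |u|^2|\nabla\chi_R^r|^2 \le \|u\|_{L^4(A)}^2 \|\nabla\chi_R^r\|_{L^4(A)}^2$. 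With the choice $|\nabla\chi_R^r|\lesssim 1/|y-x|$ one computes $\|\nabla\chi_R^r\|_{L^4(A)}^4 \lesssim \int_r^R \rho^{-4}\rho^3\,d\rho = \log(R/r)$, which does \emph{not} go to zero — so instead I would use the Hardy-type / critical choice more cleverly: take $\eta$ supported so that the $L^4$-norm of $\nabla\chi_R^r$ over the annulus is controlled, e.g. by splitting the annulus into dyadic subannuli and using a logarithmic cut-off $\chi$ whose gradient in $L^4$ is $\lesssim (\log(R/r))^{-3/4}$ (the standard Brezis–Nirenberg/de Figueiredo logarithmic cut-off construction). Then $\|\nabla\chi_R^r\|_{L^4(A)}^2 \lesssim (\log(R/r))^{-1/2} \to 0$ as $r/R\to 0$, and by Sobolev $\|u\|_{L^4(A)}^2 \le \|u\|_{L^4(\mathbb R^4)}^2 \lesssim \|\nabla u\|_{L^2}^2 \le K(u,w)$, so this term is $\le \varepsilon(r/R)\,K(u,w)$ with $\varepsilon(r/R)\to 0$. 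The cross term is handled by Cauchy–Schwarz: $2\int_A \chi_R^r u\,\nabla u\cdot\nabla\chi_R^r \le 2\|\nabla u\|_{L^2(A)}\,\|u\nabla\chi_R^r\|_{L^2(A)}$, and $\|u\nabla\chi_R^r\|_{L^2(A)}\le \|u\|_{L^4(A)}\|\nabla\chi_R^r\|_{L^4(A)}$, which is again $\lesssim \varepsilon(r/R)^{1/2} K(u,w)^{1/2}$, so the whole cross term is $\lesssim \varepsilon(r/R)^{1/2} K(u,w)$. Choosing $C(\delta)$ small enough that $\varepsilon(r/R)^{1/2}\cdot(\text{const}) < \delta$ whenever $r/R < C(\delta)$ gives the first inequality; the second follows by the identical argument applied to $1-\chi_R^r$, whose gradient is supported in the same annulus $A$ and satisfies the same $L^4$ bound, with the leading term now $\int_{\mathbb R^4\setminus B(x,r)}(|\nabla u|^2+|\nabla w|^2)$.

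The main obstacle, and the step requiring care, is precisely the construction of the cut-off with a favorable $L^4$-gradient bound: a naive linear cut-off over the annulus gives $\|\nabla\chi_R^r\|_{L^4}^4 \sim \log(R/r)$ which blows up, so one genuinely needs the logarithmic interpolation (choosing $\chi_R^r$ to transition on a logarithmic scale, i.e. essentially a function of $\log|y-x|$), which is the classical device from the critical-exponent literature. Once that cut-off is in hand, everything else is Cauchy–Schwarz, Hölder, and the (global) critical Sobolev embedding $\dot H^1(\mathbb R^4)\hookrightarrow L^4(\mathbb R^4)$, together with the monotonicity $\|\nabla(\chi u)\|_{L^2}^2$-type expansions above. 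I would also note that the constant $C(\delta)$ depends only on $\delta$ and the fixed profile $\eta$, uniformly in $x\in\mathbb R^4$, since translating $x$ does not affect any of the norms involved.
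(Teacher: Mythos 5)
Your construction is correct and is essentially the logarithmic-cutoff argument from the cited reference (Flucher--M\"{u}ller, Lemma 8, and its adaptation in Noguera--Pastor, Lemma 3.8), to which the paper defers without giving details. One minor arithmetic slip: with $|\nabla\chi_R^r(y)| = \bigl(|y-x|\log(R/r)\bigr)^{-1}$ on the transition annulus one has $\|\nabla\chi_R^r\|_{L^4(A)}^4 \sim (\log(R/r))^{-3}$, so $\|\nabla\chi_R^r\|_{L^4(A)}^2 \sim (\log(R/r))^{-3/2}$ rather than $(\log(R/r))^{-1/2}$; this is strictly more favorable and does not affect the argument.
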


\begin{proof}
The proof of this result follows the same strategy as in \cite[Lemma 8]{FM} or in \cite[Lemma 3.8]{pastor3}. So, we omit the details.
\end{proof}

With this in hand, we  state the localized version of the Sobolev inequality as follows.
\begin{corollary}\label{corol39}
	Assume $(u,w)\in\dot{H}^1(\mathbb{R}^4)\times\dot{H}^1(\mathbb{R}^4)$ satisfies $u,w>0$. Fix $\delta>0$ and $r/R \leq C(\delta)$ with $C(\delta)$ as in Lemma \ref{lema38}. Then
	\begin{equation*}
		\int_{B(x,r)}\varphi(u,w)dy\leq I^{-2}\left[\int_{B(x,R)}|\nabla u|^2+|\nabla w|^2dy+\delta K(u,w)\right]^{2},
	\end{equation*}
	\begin{equation*}
		\int_{\mathbb{R}^4\backslash B(x,R)}\varphi(u,w)dy\leq I^{-2}\left[\int_{\mathbb{R}^4\backslash B(x,r)}|\nabla u|^2+|\nabla w|^2dy+(2\delta +\delta^2) K(u,w)\right]^{2}.
	\end{equation*}
\end{corollary}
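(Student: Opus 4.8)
The plan is to deduce Corollary~\ref{corol39} directly from the localized Sobolev estimate of Lemma~\ref{lema38} together with the sharp critical inequality \eqref{DSCN}. The key observation is that $\varphi(u,w)$ is exactly the integrand of $N$, so that $\int_\Omega \varphi(u,w)\,dy = N(u\mathbf 1_\Omega, w\mathbf 1_\Omega)$ whenever the restricted pair still lies in $\mathcal N$; but rather than restrict (which would destroy $\dot H^1$-regularity), I would multiply by the cut-off $\chi_R^r$ from Lemma~\ref{lema38}. Since $u,w>0$ and $0\le\chi_R^r\le 1$, homogeneity of degree $4$ of $\varphi$ gives the pointwise bound $\varphi(u,w)=\varphi\!\left(\tfrac{u}{\chi_R^r},\tfrac{w}{\chi_R^r}\right)(\chi_R^r)^4\ge\varphi(\chi_R^r u,\chi_R^r w)$ on $B(x,r)$, where $\chi_R^r\equiv 1$; more simply, on $B(x,r)$ we have $\varphi(u,w)=\varphi(\chi_R^r u,\chi_R^r w)$ identically, so
\[
\int_{B(x,r)}\varphi(u,w)\,dy \le \int_{\R^4}\varphi(\chi_R^r u,\chi_R^r w)\,dy = N(\chi_R^r u,\chi_R^r w).
\]
The next step is to check that $(\chi_R^r u,\chi_R^r w)\in\mathcal N$: this holds because $\chi_R^r u,\chi_R^r w\ge 0$ are not both identically zero (assuming $(u,w)$ is genuinely supported near $x$; if $\int_{B(x,r)}\varphi=0$ the inequality is trivial), and for non-negative functions $N$ agrees with $\int\varphi\ge 0$, with strict positivity unless both vanish. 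Then \eqref{DSCN} applies:
\[
N(\chi_R^r u,\chi_R^r w)\le I^{-2}\,K(\chi_R^r u,\chi_R^r w)^2,
\]
and finally the first estimate of Lemma~\ref{lema38} bounds $K(\chi_R^r u,\chi_R^r w)$ by $\int_{B(x,R)}(|\nabla u|^2+|\nabla w|^2)\,dy+\delta K(u,w)$. Squaring (both sides being non-negative) yields the first displayed inequality of the corollary.

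For the second inequality I would repeat the argument with the complementary cut-off $1-\chi_R^r$, which equals $1$ on $\R^4\setminus B(x,R)$ and $0$ on $B(x,r)$. Thus $\int_{\R^4\setminus B(x,R)}\varphi(u,w)\,dy\le\int_{\R^4}\varphi((1-\chi_R^r)u,(1-\chi_R^r)w)\,dy=N((1-\chi_R^r)u,(1-\chi_R^r)w)$, then \eqref{DSCN} gives the factor $I^{-2}$ times $K((1-\chi_R^r)u,(1-\chi_R^r)w)^2$, and the second estimate of Lemma~\ref{lema38} controls this by $\int_{\R^4\setminus B(x,r)}(|\nabla u|^2+|\nabla w|^2)\,dy+\delta K(u,w)$. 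Squaring gives a bound with $[\,\cdots+\delta K(u,w)\,]^2$; the stated form with $(2\delta+\delta^2)K(u,w)$ arises because one wants to pull the $\delta K$ term out of the square — writing $(A+\delta K)^2 = A^2 + 2\delta K A + \delta^2 K^2 \le (A + (2\delta+\delta^2)K)\cdot(\text{something})$; more precisely, using $A\le K$ (since a localized Dirichlet integral is at most $K(u,w)$) one gets $(A+\delta K)^2\le A^2+(2\delta+\delta^2)K^2$, and then $A^2\le (A+(2\delta+\delta^2)K)^2$ trivially, or one simply keeps $(A+\delta K)^2\le (A+(2\delta+\delta^2)K)^2$ after noting $\delta K\le (2\delta+\delta^2)K$. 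I would present whichever bookkeeping is cleanest; the point is purely that the tail Dirichlet integral is dominated by the full $K(u,w)$, which lets the two $\delta$-errors (one from the cut-off estimate, one absorbed) be combined.

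The only genuinely non-routine point — and the one I would state carefully — is the membership $(\chi_R^r u,\chi_R^r w)\in\mathcal N$ and likewise for $1-\chi_R^r$, i.e. that the localized pairs have strictly positive $N$ so that \eqref{DSCN} is legitimately applicable. Since $u,w>0$ a.e. and the cut-offs are non-negative and not identically zero on a set of positive measure, $\varphi(\chi_R^r u,\chi_R^r w)>0$ on a set of positive measure (each of the four terms of $\varphi$ is a product of non-negative powers of $u$ and $w$), hence $N>0$. In the degenerate case where, say, $u\equiv w\equiv 0$ on $B(x,R)$, the left-hand side of the first inequality is zero and there is nothing to prove; similarly for the tail. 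Everything else is homogeneity and the two ingredients already in hand, so the proof is short.
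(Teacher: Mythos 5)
Your proof is correct and follows the standard route that the paper defers to its references for: cut off, apply the sharp Sobolev bound \eqref{DSCN}, and control the localized Dirichlet integral by Lemma~\ref{lema38}. One remark worth making: as you observe, applying the second estimate of Lemma~\ref{lema38} directly gives the tail inequality with $\delta K(u,w)$ in the bracket, which is strictly sharper than the stated $(2\delta+\delta^2)K(u,w)$; the weaker form in the corollary is presumably inherited verbatim from \cite{FM}, where the analogue of Lemma~\ref{lema38} is stated only for $\chi_R^r$ and the bound for $1-\chi_R^r$ must be manufactured from it, producing the extra $\delta^2$ and the doubled $\delta$. Since $\delta\le 2\delta+\delta^2$, your version implies the stated one, so there is no gap. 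The brief detour via dividing by $\chi_R^r$ and invoking degree-4 homogeneity is unnecessary and slightly misleading (division by $\chi_R^r$ is undefined where it vanishes); the clean observation you reach immediately afterward — that $\varphi(u,w)=\varphi(\chi_R^r u,\chi_R^r w)$ pointwise on $B(x,r)$ and $\varphi(\chi_R^r u,\chi_R^r w)\ge 0$ globally because $u,w>0$ — is all that is needed, and similarly for $1-\chi_R^r$ on the exterior. Your care about membership in $\mathcal N$ is also more than is required: for non-negative $u,w$ the functional $N$ is automatically non-negative, and when it vanishes the inequality \eqref{DSCN} holds trivially, so one can apply it without further discussion.
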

\begin{proof}
This may proved as in \cite[Corollary 9]{FM} or in \cite[Corollary 3.9]{pastor3}. So, we also omit the details here.
\end{proof}

\subsection{Concentration-compactness}

In what follows we shall denote by $\mathcal{M}_+(\R^4)$ the space of all  non-negative Radon measures, by $\mathcal{M}_+^b(\R^4)$ the space of all bounded (or finite) Radon measures and by $\mathcal{M}_+^1(\R^4)$ the space of all probability Radon measures. We write $\nu\ll\mu$  if the measure $\nu$ is absolutely
continuous with respect to the measure $\mu$. Also, by $C_0(\R^4)$ and $C_b(\R^4)$ we denote the space of all compactly supported continuous functions and the space all bounded continuous functions, respectively.

Let us recall some  notions of convergence of measures. 
\begin{definition}
	\begin{itemize}
		\item[(i)] A sequence $(\mu_m)\subset\mathcal{M}_+(\R^4)$ is said to converge vaguely to $\mu$ in $\mathcal{M}_+(\R^4)$, and denoted by $\mu_m\overset{\ast}{\rightharpoonup}\mu$, if $\int fd\mu_m\rightarrow\int fd\mu$ for all $f\in{C}_0(\R^4)$.
		
		\item[(ii)] A sequence $(\mu_m)\subset\mathcal{M}_+^b(\R^4)$ is said to converge weakly to $\mu$,
		in $\mathcal{M}_+^b(\R^4)$, and denoted by $\mu_m\rightharpoonup\mu$, if $\int fd\mu_m\rightarrow\int fd\mu$, for all $f\in {C}_b(\R^4)$.
		
		\item[(iii)] A sequence $(\mu_m)\subset\mathcal{M}_+^b(\R^4)$ is said to be uniformly tight if, for every $\epsilon>0$, there exists a compact subset $K_\epsilon\subset \R^4$ such that $\mu_m(X\backslash K_\epsilon)\leq \epsilon$ for all $m$. We also say that a set $\mathcal{H}\subset \mathcal{M}_+(\R^4)$ is vaguely bounded if $\sup_{\mu\in\mathcal{H}}\left|\int f d\mu\right|<\infty$, for all $f\in{C}_0(X)$.
	\end{itemize}
\end{definition}

The first lemma below  is a slightly modification of the concentration-compactness lemma  presented in \cite[Lemma I.1]{lions1}. 

\begin{lemma}[Concentration-compactness lemma I] \label{lema35}	
	Suppose that $(\nu_m)$ is a sequence in $\mathcal{M}_+^1(\mathbb{R}^4)$. Then, there is a subsequence, still denoted by $(\nu_m)$, such that one of the following conditions holds:
	\begin{itemize}
		\item[(i)](Vanishing) For all $R>0$ it holds
		$$
		\lim_{m\rightarrow\infty}\left(\sup_{x\in\mathbb{R}^4}\nu_m(B(x,R))\right)=0.
		$$
		
		\item[(ii)](Dichotomy) There is a number $\lambda\in(0,1)$ such that for all $\epsilon>0$ there exists $R>0$ and a sequence $(x_m)$ with the following property: given $R'>R$
		$$
		\nu_m(B(x_m,R))\geq \lambda-\epsilon,
		$$
		$$
		\nu_m(\mathbb{R}^4\backslash B(x_m,R'))\geq 1-\lambda-\epsilon,
		$$
		for $m$ sufficiently large.
		
		\item[(iii)](Compactness) There exists a sequence $(x_m)\subset\mathbb{R}^4$ such that for each $\epsilon>0$ there is a radius $R>0$ with the property
		$$
		\nu_m(B(x_m,R))\geq 1-\epsilon,
		$$
		for all $m$.
	\end{itemize}
\end{lemma}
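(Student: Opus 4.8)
The plan is to follow Lions' original argument via the \emph{concentration function}. For each $m$ I set
$$
Q_m(R):=\sup_{x\in\mathbb{R}^4}\nu_m(B(x,R)),\qquad R>0,
$$
which is nondecreasing in $R$, takes values in $[0,1]$, and satisfies $Q_m(R)\to1$ as $R\to\infty$ since $\nu_m$ is a probability measure. By Helly's selection theorem I pass to a subsequence (not relabelled) along which $Q_m\to Q$ pointwise, with $Q:[0,\infty)\to[0,1]$ nondecreasing, and put
$$
\lambda:=\lim_{R\to\infty}Q(R)=\sup_{R>0}Q(R)\in[0,1].
$$
The three alternatives correspond to $\lambda=0$, $0<\lambda<1$, and $\lambda=1$. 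If $\lambda=0$ then $Q\equiv0$, hence $Q_m(R)\to0$ for every $R>0$, which is precisely vanishing, alternative (i).

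If $0<\lambda<1$, fix $\epsilon\in(0,\lambda)$. Since $Q(R)\uparrow\lambda$ I may choose $R>0$ with $Q(R)>\lambda-\tfrac{\epsilon}{2}$, so $Q_m(R)>\lambda-\epsilon$ and hence $\nu_m(B(x_m,R))>\lambda-\epsilon$ for a suitable $x_m$ and all large $m$; this is the first inequality. For the second, given $R'>R$ one has $\nu_m(B(x_m,R'))\le Q_m(R')\to Q(R')\le\lambda$, so $\nu_m(B(x_m,R'))<\lambda+\epsilon$ for $m$ large, whence $\nu_m(\mathbb{R}^4\setminus B(x_m,R'))\ge1-\lambda-\epsilon$. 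After renaming $\epsilon$ this is dichotomy, alternative (ii); note the threshold on $m$ for the first inequality is independent of $R'$, so both hold simultaneously for $m$ large.

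If $\lambda=1$, I must exhibit a single sequence $(x_m)$ serving all $\epsilon$ at once. For each $\epsilon\in(0,\tfrac12)$ the argument above yields a radius $R_\epsilon$ and centres $x_m^{\epsilon}$ with $\nu_m(B(x_m^{\epsilon},R_\epsilon))>1-\epsilon$ for $m$ large. The key observation is that any two balls carrying $\nu_m$-mass strictly greater than $\tfrac12$ must intersect; applied to $B(x_m^{\epsilon},R_\epsilon)$ and $B(x_m^{\epsilon'},R_{\epsilon'})$ this gives $|x_m^{\epsilon}-x_m^{\epsilon'}|<R_\epsilon+R_{\epsilon'}$, hence $B(x_m^{\epsilon'},R_{\epsilon'})\subset B(x_m^{\epsilon},R_\epsilon+2R_{\epsilon'})$. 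Fixing once and for all $x_m:=x_m^{1/4}$ (and $x_m$ arbitrary for the finitely many small $m$), it follows that for every $\epsilon>0$ there is $R=R(\epsilon)$ with $\nu_m(B(x_m,R))\ge1-\epsilon$ for $m$ large; enlarging $R(\epsilon)$ to absorb the finitely many remaining indices (each $\nu_m$ being a probability measure) gives the bound for all $m$, i.e.\ compactness, alternative (iii).

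The step I expect to be the main obstacle is exactly this last case: Helly's theorem and the definition of $Q_m$ hand us, for each $\epsilon$, \emph{some} radius and \emph{some} sequence of centres, but a priori both depend on $\epsilon$, whereas the compactness alternative demands one sequence of centres that works uniformly in $\epsilon$. The overlap argument ("two balls of mass $>\tfrac12$ must meet") is what welds these together, and it is the only genuinely nontrivial ingredient. A minor technical point is that Helly's theorem only guarantees convergence of $Q_m$ outside the countable discontinuity set of $Q$; this is harmless, because throughout the proof $Q$ is evaluated only at radii that may be chosen freely, in particular away from that set.
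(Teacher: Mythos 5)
Your proof is correct and follows the standard Lions concentration-function argument (Helly selection on $Q_m(R)=\sup_x\nu_m(B(x,R))$, trichotomy on $\lambda=\lim_{R\to\infty}Q(R)$, and the ``two balls of mass $>\tfrac12$ must overlap'' welding step in the compactness case), which is precisely the method in the reference \cite{FM} to which the paper outsources this lemma. You also correctly flag and dispose of the only delicate point, namely the need to choose the radius $R$ at a continuity point of $Q$ and to produce an $\epsilon$-independent sequence of centres in case (iii).
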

\begin{proof}
We refer the reader to \cite[Lemma 23]{FM}.
\end{proof}

As already commented, in order to find a minimizer for the minimization problem \eqref{MinNorm}, we will construct a suitable sequence of probability Radon measures and apply Lemma \ref{lema35}. We start with the following.

\begin{lemma}[Concentration-compactness lemma II] \label{lema36}
	 Let $(u_m,w_m)\subset\dot{H}^1(\mathbb{R}^4)\times \dot{H}^1(\mathbb{R}^4)$ be a sequence such that $u_m,w_m\geq 0$ and 
	\begin{equation*}
		\left\{\begin{array}{lcc}
			({u}_m,w_m)\rightharpoonup (u,w), & \hbox{in}& \dot{H}^1(\mathbb{R}^4)\times \dot{H}^1(\mathbb{R}^4),\\
			\mu_m:=(|\nabla u_m|^2+|\nabla w_m|^2) dx \overset{\ast}{\rightharpoonup} \mu,& \hbox{in}& \mathcal{M}_+^b(\mathbb{R}^4)\\
			\nu_m:=\varphi(u_m,w_m) dx \overset{\ast}{\rightharpoonup} \nu,& \hbox{in}& \mathcal{M}_+^b(\mathbb{R}^4). \\
			
		\end{array}\right.
	\end{equation*}
	Then,
	\begin{itemize}
		\item[(i)] There exists an at  most countable set $J$, a family of distinct points $\{x_j\in \mathbb{R}^4;\, j\in J\}$, and a family of non-negative numbers $\{a_j;\, j\in J\}$ such that
		\begin{equation}\label{315}
			\nu=\varphi(u,w)dx+\sum_{j\in J}a_j\delta_{x_j}.
		\end{equation}
		
		\item[(ii)]Moreover, we have
		\begin{equation}\label{316}
			\mu \geq  \left(|\nabla u|^2+|\nabla w|^2\right)dx +\sum_{j\in J}b_j\delta_{x_j}
		\end{equation}
		fore some family $\{b_j;\, j\in J\},b_j>0,$ such that
		\begin{equation}\label{317}
			a_j\leq I^{-2}b_j^2,\quad \forall j\in J.
		\end{equation}
		In particular, $\displaystyle\sum_{j\in J}a_j^{1/2}<\infty$.
	\end{itemize}
\end{lemma}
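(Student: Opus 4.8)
The plan is to follow the classical concentration-compactness argument of P.-L. Lions, adapted to the vector-valued critical case as in \cite{FM,pastor3}. First I would reduce everything to the scalar critical Sobolev inequality \eqref{DSCN}: since $u_m,w_m\geq 0$, we have $\varphi(u_m,w_m)\asymp |u_m|^4+|w_m|^4$ up to harmless constants, so the sequence $(u_m,w_m)$ is bounded in $\dot H^1\times\dot H^1$ (this is needed to even assert the weak limits $u,w$ and the vague limits $\mu,\nu$ exist, along a subsequence, by Banach--Alaoglu and the Sobolev embedding). Writing $\bar u_m:=u_m-u$ and $\bar w_m:=w_m-w$, the Brezis--Lieb-type Lemma \ref{BreLie}, applied with $f=\varphi$ and $l=2$ (checking the decomposition hypothesis $|\varphi(a+b)-\varphi(b)|\le\epsilon\zeta(a)+\psi_\epsilon(b)$, which holds because $\varphi$ is a homogeneous degree-$4$ polynomial and Young's inequality), gives
$$
\varphi(u_m,w_m)\,dx \;=\; \varphi(u,w)\,dx + \varphi(\bar u_m,\bar w_m)\,dx + o(1)
$$
in $L^1$; hence the singular part of $\nu$, namely $\tilde\nu:=\nu-\varphi(u,w)\,dx$, is the vague limit of $\varphi(\bar u_m,\bar w_m)\,dx$. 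Similarly, by weak convergence in $\dot H^1$ and lower semicontinuity, $\tilde\mu:=\mu-(|\nabla u|^2+|\nabla w|^2)\,dx$ is a non-negative measure and is the vague limit of $(|\nabla\bar u_m|^2+|\nabla\bar w_m|^2)\,dx$.

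Next I would establish the key pointwise (in the sense of measures) inequality $\tilde\nu\le I^{-2}\tilde\mu^{\,2}$ relating the two singular measures. Fix any $\phi\in C_0^\infty(\R^4)$; applying the critical inequality \eqref{DSCN} to $(\phi\,\bar u_m,\phi\,\bar w_m)$ — or more precisely the version for $\varphi$, which since $u,w>0$ after the reduction reads $\int\varphi(f,g)\le I^{-2}\,K(f,g)^2$ — and letting $m\to\infty$, the cross terms $\int|\nabla\phi|^2(\bar u_m^2+\bar w_m^2)$ and $\int\phi\nabla\phi\cdot(\bar u_m\nabla\bar u_m+\cdots)$ vanish because $\bar u_m,\bar w_m\to 0$ strongly in $L^2_{\rm loc}$ (Rellich), leaving
$$
\int|\phi|^4\,d\tilde\nu \;\le\; I^{-2}\Bigl(\int|\phi|^2\,d\tilde\mu\Bigr)^{2}.
$$
A standard density/approximation argument (replacing $|\phi|^2$ by an arbitrary non-negative continuous compactly supported function, then by indicators of balls) upgrades this to $\tilde\nu(E)\le I^{-2}\tilde\mu(E)^2$ for Borel sets $E$, which forces $\tilde\nu\ll\tilde\mu$ and, by the reverse-Hölder/atomic structure lemma of Lions, $\tilde\nu$ is purely atomic: $\tilde\nu=\sum_{j\in J}a_j\delta_{x_j}$ with $J$ at most countable. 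This yields \eqref{315}. Setting $b_j:=\tilde\mu(\{x_j\})$ (with $b_j>0$: if $b_j=0$ the inequality would give $a_j=0$, so we simply discard those $j$), the set inequality applied to $E=\{x_j\}$ gives $a_j\le I^{-2}b_j^2$, i.e. \eqref{317}; and since $\mu\ge(|\nabla u|^2+|\nabla w|^2)\,dx+\tilde\mu\ge(|\nabla u|^2+|\nabla w|^2)\,dx+\sum_j b_j\delta_{x_j}$ we get \eqref{316}. Finally $\sum_j a_j^{1/2}\le I^{-1}\sum_j b_j\le I^{-1}\tilde\mu(\R^4)<\infty$ because $\tilde\mu$ is a bounded measure (the gradients being bounded in $L^2$), giving the last assertion.

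The main obstacle I anticipate is not any single estimate but the careful bookkeeping in passing from the test-function inequality $\int|\phi|^4\,d\tilde\nu\le I^{-2}(\int|\phi|^2\,d\tilde\mu)^2$ to the measure-theoretic conclusion that $\tilde\nu$ is atomic with the stated bound — this is exactly the delicate step in \cite[Lemma I.2]{lions1} and its homogeneous-Sobolev analogues, where one must be careful that $\tilde\mu$ has no "continuous" part charging the atoms of $\tilde\nu$ and that the exponent $2$ (rather than $d/(d-2)=2$ in dimension $4$, which happily coincides) produces genuine atoms rather than an absolutely continuous remainder. A secondary technical point is justifying that the cross terms in the localized inequality vanish, which requires the local strong $L^2$ convergence $\bar u_m\to 0$; this is where boundedness in $\dot H^1$ together with the Rellich--Kondrachov theorem on balls is used, and one must be slightly careful since $\bar u_m$ need not converge in $L^2(\R^4)$ globally, only locally — but the test functions are compactly supported, so this suffices.
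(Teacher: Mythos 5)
Your proposal is correct and follows essentially the same path as the paper's proof: a Brezis--Lieb plus gradient-orthogonality reduction to the centered sequence $(y_m,z_m)\rightharpoonup(0,0)$, then the localized Sobolev estimate $\int|\phi|^4\,d\nu\le I^{-2}\bigl(\int|\phi|^2\,d\mu\bigr)^2$ upgraded to $\nu(E)\le I^{-2}\mu(E)^2$, and finally the atomic decomposition. The one presentational difference is that where you cite Lions' reverse-H\"older/atomic-structure lemma as a black box, the paper proves that step explicitly via Radon--Nikodym and Lebesgue differentiation, showing the density $h$ obeys $0\le h(x)\le I^{-2}\mu(\{x\})$ and hence vanishes $\mu$-a.e.\ off the countable set of atoms of $\mu$.
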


\begin{proof}
This lemma is inspired in the limit case lemma in \cite{lions2} (see also \cite{evans}, \cite{FM}, and \cite{pastor3} for similar results). The weak converge of $(u_m,w_m)\rightharpoonup (u,w)$ implies that, up to a subsequence, we have $(u_m,w_m)\rightarrow (u,w)$ a.e. in  $\mathbb{R}^4$, which gives $u,w\geq 0$. So, we shall divide the proof into the cases  $(u,w)=(0,0)$ and $(u,w)\neq(0,0)$.

	\textbf{Case 1.} Assume $(u,w)=(0,0)$. 
	
Take $\xi\in C^\infty_0(\mathbb{R}^4)$ and observe that from the vague convergence of $(\nu_m)$, the homogeneity of $\varphi$, and \eqref{DSCN},
	\begin{equation}\label{318}
		\begin{split}
			\int |\xi|^4d\nu=\lim_{m\rightarrow\infty}\int |\xi|^4\varphi(u_m,w_m)dx
			=\lim_{m\rightarrow\infty}\int \varphi(|\xi|u_m,|\xi|w_m)dx
			\leq I^{-2} \liminf_{m\rightarrow \infty}K(\xi u_m,\xi w_m)^2.
		\end{split}
	\end{equation}
	Since $(u_m,w_m)\rightharpoonup (0,0)$ in $\dot{H}^1(\mathbb{R}^4)\times \dot{H}^1(\mathbb{R}^4)$, we know that (see \cite[Theorem 8.6]{lieb}) $(u_m,w_m)\to (0,0)$ in $L^2_{\rm loc}(\mathbb{R}^4)\times L^2_{\rm loc}(\R^4)$. Then,  using the triangular inequality, we obtain
	\begin{equation*}
		\begin{split}
			\Big|\left(\Vert \nabla(\xi u_m)\Vert_{L^2}^2\right.\bigr.&+\bigl.\left.\Vert \nabla(\xi w_m)\Vert_{L^2}^2\right)^{1/2}
			-(\Vert \xi\nabla( u_m)\Vert_{L^2}^2+\Vert \xi\nabla( w_m)\Vert_{L^2}^2)^{1/2}\Big|\\
			&\leq \left(\Vert \nabla(\xi u_m)-\xi\nabla u_m\Vert_{L^2}^2+(\Vert \nabla(\xi w_m)-\xi\nabla w_m\Vert_{L^2}^2\right)^{1/2}\\
			&=\left(\Vert u_m\nabla\xi\Vert_{L^2}^2+\Vert w_m\nabla\xi\Vert_{L^2}^2\right)^{1/2}\\
			&\lesssim \left(\int_{\rm supp(\xi)}|u_m|^2+| w_m|^2\right)^{1/2}\rightarrow 0,\quad m\rightarrow\infty.\\
		\end{split}
	\end{equation*}
	Combining with the vague convergence of $(\mu_m)$, we get
	\begin{equation*}
		\begin{split}
			\liminf_{m\rightarrow\infty}K(\xi u_m,\xi w_m)^2&=\liminf_{m\rightarrow\infty}\left(\int |\xi|^2(|\nabla u_m|^2+|\nabla w_m|^2)dx\right)^2\\
			&=\liminf_{m\rightarrow\infty}\left(\int|\xi|^2d\mu_m\right)^2
			=\left(\int|\xi|^2d\mu\right)^2.
		\end{split}
	\end{equation*}
	Then, from \eqref{318}, we deduce
	\begin{equation}\label{320}
		\int |\xi|^4d\nu\leq I^{-2}\left(\int|\xi|^2d\mu\right)^2,\quad \xi\in C^\infty_0(\mathbb{R}^4).
	\end{equation}
	We claim that \eqref{320} implies that
	\begin{equation}\label{321}
		\nu(E)\leq I^{-2}\mu(E)^2,\quad E\in \mathcal{B}(\mathbb{R}^4),
	\end{equation}
where $\mathcal{B}(\mathbb{R}^4)$ denotes the Borel $\sigma$-algebra on $\R^4$.	Indeed, let $U\subset \mathbb{R}^4$ be an open set and take a compact set $K\subset U$. By $C^\infty$ Urysohn's lemma (see \cite[Lemma 8.18]{Folland}), there exists $g\in C^\infty_0(\mathbb{R}^4)$ obeying $0\leq g\leq 1$, $g=1$ in $K$ and supp$(g)\subset U$. Thus, by \eqref{320},
	$$
	\nu(K)=\int_K g^4d\nu\leq \int g^4d\nu\leq I^{-2}\left(\int g^2d\mu\right)^2\leq I^{-2}\left(\int_{\mbox{\scriptsize supp} (g)}g^2d\mu\right)^2\leq I^{-2}\left(\int_Ud\mu\right)^2,
	$$
that is, $\nu(K)\leq I^{-2}\mu(U)^2$, for all $K\subset U$ compact. Since $\nu$ is a Radon measure, by its inner regularity, we have for all open set $U\subset\mathbb{R}^4$,
	\begin{equation}\label{322}
		\nu(U)\leq I^{-2}\mu(U)^2.
	\end{equation}
	Now, if $E\in\mathcal{B}(\mathbb{R}^4)$ and $U$ is an open set with $E\subset U$, then from \eqref{322}, we get $\nu(E)\leq \nu(U)\leq I^{-2}\mu(U)^2$. Since $\mu$ is a Radon measure, we can use its outer regularity to get \eqref{321}.
	
	Next, consider $D=\{x\in\mathbb{R}^4;\, \mu(\{x\})>0\}$. We may write $D=\displaystyle\bigcup_{k=1}^\infty D_k$, where $D_k=\{x\in\mathbb{R}^4;\,\mu(\{x\})>1/k\}.$ Since $\mu$ is a finite measure we deduce that  $D_k$ is finite for all $k$. Indeed, assume that exists $k_0$ such that $D_{k_0}$ has infinitely many elements, i.e., $D_{k_0}=\{x_j, j\in\mathbb{N}\}$. Then $\mu(D_{k_0})=\sum_{j\in\mathbb{N}}\mu(\{x_j\})>\sum_{j\in\mathbb{N}}1/k_0=\infty$, which contradicts the fact that $\mu$ is finite. Hence, $D_k$  is finite for all $k\in\mathbb{N}$ and the set $D$ is at most countable. Thus,  we may write $D=\{x_j;\,\, j\in J\}$, with $J\subset\mathbb{N}$.
	
	Set $b_j=\mu(\{x_j\})$, $j\in J$, then for any $E\in\mathcal{B}(\mathbb{R}^4)$, we have
	\begin{equation}\label{323}
		\sum_{j\in J} b_j \delta_{x_j} (E)=\sum_{\underset{x_j\in E}{j\in J}}b_j=\sum_{\underset{x_j\in E}{j\in J}}\mu(\{x_j\})\leq \mu(E),
	\end{equation}
	where $\delta_{x_j}(E)=1$, if $x_j\in E$, and $\delta_{x_j}(E)=0$ otherwise. This gives \eqref{316} in the case $(u,w)=(0,0)$. 
	
	Now, observe that from \eqref{321} we have $\nu\ll\mu$ and, by  Radon-Nikodym theorem (see \cite[Section 1.6]{EG}) we may obtain a non-negative function $h\in L^1(\mathbb{R}^4,\mu)$ satisfying
	\begin{equation}\label{324}
		\nu(E)=\int_Eh(x)d\mu(x), \quad E\in\mathcal{B}(\mathbb{R}^4).
	\end{equation} 
	Moreover, $h$ satisfies
	\begin{equation}\label{325}
		h(x)=\lim_{r\rightarrow 0}\frac{\nu(B(x,r))}{\mu(B(x,r))},\quad\mu \mbox{ a.e. }x\in \mathbb{R}^4.
	\end{equation}
	Using \eqref{325} and \eqref{321}, we get $0\leq h(x)\leq I^{-2}\mu(\{x\}).$ Thus, $h(x)=0$, $\mu$ a.e. on $\mathbb{R}^4\backslash D$. In particular, we can rewrite the integral \eqref{324} as
	\begin{equation}\label{326}
		\int_E h(x)d\mu(x)=\sum_{\underset{x_j\in E}{j\in J}}h(x_j)\mu(\{x_j\}).
	\end{equation}
	Setting $a_j=\nu(\{x_j\})$, $j\in J$, we have from \eqref{324} and \eqref{326} that in fact $a_j=h(x_j)b_j$, $\forall j\in J$. Then, for all $E\in\mathcal{B}(\mathbb{R}^4)$, we have
	$$
	\nu(E)=\sum_{\underset{x_j\in E}{j\in J}}h(x_j)\mu(\{x_j\})=\sum_{\underset{x_j\in E}{j\in J}}a_j=\sum_{j\in J}a_j\delta_{x_j}(E),
	$$
	which establishes \eqref{315} for $(u,w)=(0,0)$.
	Finally, inequality \eqref{317} follows immediately from the definitions of $a_j$ and $b_j$ and \eqref{321}. Finally, if we take $E=\R^4$ in \eqref{323} we deduce that $\sum_{j\in J}b_j$ is convergent. Hence, the convergence of the series $\sum_{j\in J} a_j^{1/2}$ follows from \eqref{321}.
	
	\textbf{Step 2.} Case $(u,w)\neq (0,0)$.
	
Let us start by noting that since $u,w\geq 0$ we have $\varphi(u,w)\geq 0$; thus $\varphi(u,w)dx$ defines a positive measure. 
	
We claim the measures 
	\begin{equation*}
		\mu-(|\nabla u|^2+|\nabla w|^2)dx\quad \hbox{and}\quad \nu-\varphi(u,w)dx
	\end{equation*}
	are non-negative.
	
	Indeed, set $(y_m,z_m)=(u_m-u,w_m-w)$ and consider the sequence of measures
	$$
	\tilde{\mu}_m:=(|\nabla y_m|^2+|\nabla z_m|^2)dx\quad \hbox{and}\quad \tilde{\nu}_m:=\varphi(|y_m|,|z_m|)dx.
	$$
	Since $(y_m,z_m)\rightharpoonup (0,0)$ in $\dot{H}^1(\mathbb{R}^4)\times\dot{H}^1(\mathbb{R}^4)$, the sequence $(K(y_m,z_m))$ is uniformly bounded. Hence, since
	$$
	\left| \int fd\tilde{\mu}_m\right|\leq \Vert f\Vert_{L^\infty}K(y_m,z_m),\quad f\in C^\infty_0(\mathbb{R}^4),
	$$
	we have that $(\tilde{\mu}_m)$ is vaguely bounded on $\mathcal{M}_+^b(\mathbb{R}^4)$. Therefore, there are  $\tilde{\mu}\in\mathcal{M}_+^b(\mathbb{R}^4)$ and  a subsequence of $(\tilde{\mu}_m)$, still denoted by $(\tilde{\mu}_m)$,  satisfying (see \cite[Section 31]{Bauer})
	\begin{equation}\label{328}
		\tilde{\mu}_m\overset{\ast}{\rightharpoonup}\tilde{\mu},\quad \mbox{in}\quad \mathcal{M}_+^b(\mathbb{R}^4).
	\end{equation}
	Now, if
	\begin{equation}\label{329}
		\mu_m\overset{\ast}{\rightharpoonup}\tilde{\mu}+(|\nabla u|^2+|\nabla w|^2)dx,\quad \mbox{in}\quad \mathcal{M}_+^b(\mathbb{R}^4),
	\end{equation}
	then by uniqueness of the vague limit, 
	$$
	\mu=\tilde{\mu}+(|\nabla u|^2+|\nabla w|^2)dx,
	$$
	from which we may conclude that $\mu-(|\nabla u|^2+|\nabla w|^2)dx$ is non-negative.
	
	So, we turn our attention to establish \eqref{329}. Since, by assumption, $\partial_{x_i} y_m\rightharpoonup 0$ and  $\partial_{x_i}z_m\rightharpoonup 0$ in $L^2(\mathbb{R}^4)$ and $f\partial_{x_i} u, f\partial_{x_i}w\in L^2(\mathbb{R}^4)$ for each $f\in C_0(\mathbb{R}^4)$, we have
	\begin{equation}\label{330}
		\begin{split}
			\lim_{m\rightarrow\infty}\int f\nabla y_m\cdot \nabla u dx=0 \quad \mbox{and} \quad
			\lim_{m\rightarrow\infty}\int f\nabla z_m\cdot \nabla w dx=0.\\
		\end{split}
	\end{equation}
	Thus, 
	\begin{equation*}
		\begin{split}
			0&\leq \left|\int fd\mu_m-\int f\left[d\tilde{\mu}+(|\nabla u|^2+|\nabla w|^2)dx\right]\right|\\
			&=\left|\int f(|\nabla u_m|^2+|\nabla w_m|^2)dx-\int f\left[d\tilde{\mu}+(|\nabla u|^2+|\nabla w|^2)dx\right]\right|\\
			&=\left|\int f\left[|\nabla y_m|^2+2\nabla y_m\cdot \nabla u+|\nabla u|^2+|\nabla z_m|^2+2\nabla z_m\cdot \nabla w+|\nabla w|^2\right]dx\right.\\
			&\quad \left.-\int fd\tilde{\mu}-\int f(|\nabla u|^2+|\nabla w|^2)dx\right|\\
			&\leq \left|\int fd\tilde{\mu}_m-\int fd\tilde{\mu}\right|+2\left[\left|\int f\nabla y_m\cdot\nabla udx\right|+\left|\int f\nabla z_m\cdot\nabla wdx\right|\right].
		\end{split}
	\end{equation*}
Note the  first term on the right-hand side of the above inequality  goes to zero by \eqref{328}. The second and third ones go to zero in view of and \eqref{330}. Consequently, \eqref{329} holds.
	
Let us now show that $(\tilde{\nu}_m)$ is vaguely bounded in $\mathcal{M}_+^b(\mathbb{R}^4)$. As seen before, $(K(y_m,z_m))$ is uniformly bounded. Then, \eqref{N} yields
	$$
	\left| \int fd\tilde{\nu}_m\right|\leq \Vert f\Vert_{L^\infty}\int \varphi(|y_m|,|z_m|)dx=CN(|y_m|,|z_m|)\leq CK(|y_m|,|z_m|)^2<M,
	$$
	for some constant $M$. Thus we may obtain a subsequence, still denoted by $(\tilde{\nu}_m)$, such that
	\begin{equation*}
		\tilde{\nu}_m\overset{\ast}{\rightharpoonup}\tilde{\nu},\quad \mbox{in}\quad\mathcal{M}_+^b(\mathbb{R}^4),
	\end{equation*}
	
As before, if
	\begin{equation}\label{332}
		\nu_m\overset{\ast}{\rightharpoonup}\tilde{\nu}+\varphi(u,w)dx,\quad \mbox{in}\quad\mathcal{M}_+^b(\mathbb{R}^4),
	\end{equation}
we have $\nu=\tilde{\nu}+\varphi(u,w)dx$ and, consequently, we deduce that $\nu-\varphi(u,w)dx$ is non-negative. So, what is left is to prove \eqref{332}. 
	
	We already know that $\varphi(u,w)\leq C(|u|^4+|w|^4)$. Then, we are able to use Brezis-Lieb's Lemma \ref{BreLie} with $\varphi(|u|,|w|)$ instead of $f$ in the following way: by assumption we may assume $(y_m,z_m)\rightarrow 0$ a.e. in $\mathbb{R}^4$ and, by Sobolev's inequality,  $(u,w)\in L^4(\mathbb{R}^4)\times L^4(\mathbb{R}^4)$ or equivalently $\varphi(|u|,|w|)\in L^1(\mathbb{R}^4)$. Note that 
	$$
	\left|\varphi(|a_1+b_1|,|a_2+b_2|)-\varphi(|b_1|,|b_2|)\right|\leq \epsilon\phi(a_1,a_2)+\psi_\epsilon(b_1,b_2),
	$$
	where $\phi(a_1,a_2)=|a_1|^4+|a_2|^4$ and $\psi_\epsilon(b_1,b_2)\leq C_\epsilon(|b_1|^4+|b_2|^4)$, with $\epsilon>0$. Also, because $(y_m,z_m)$ is uniformly bounded in $L^4(\mathbb{R}^4)$,
	$$
	\int\phi(y_m,z_m)dx\leq M\quad \hbox{and}\quad \int\psi_\epsilon(u,w)dx<\infty,
	$$
	for $M$ independent of $\epsilon$ and $m$. The Brezis-Lieb then Lemma gives us
	\begin{equation}\label{333}
		\lim_{m\rightarrow\infty}\int|\varphi(|u_m|,|w_m|)-\varphi(|y_m|,|z_m|)-\varphi(|u|,|w|)|dx=0.
	\end{equation} 
	Hence, for all $g\in C_0(\mathbb{R}^4),$
	\begin{equation*}
		\begin{split}
			0&\leq \left|\int gd\nu_m-\int g[d\tilde{\nu}+\varphi(u,w)dx]\right|\\
			&=\left|\int g \varphi(u_m,w_m)dx-\int g \varphi(|y_m|,|z_m|)dx+\int g \varphi(|y_m|,|z_m|)dx-\int g[d\tilde{\nu}+\varphi(u,w)]dx\right|\\
			&\leq \Vert g\Vert_{L^\infty}\int |\varphi(|u_m|,|w_m|-\varphi(|y_m|,|z_m|)-\varphi(|u|,|w|)|dx+\left|\int gd\tilde{\nu}_m-\int g\tilde{\nu}\right|.\\
		\end{split}
	\end{equation*}
 By taking the limit as $m\to\infty$, the first term on the right-hand side of the above inequality vanishes in view of \eqref{333}; the second term goes to zero by the vague convergence of $(\tilde{\nu}_m)$. So \eqref{332} holds. This  completes the proof of our claim.

As a consequence of the above claim we have
	\begin{equation*}
		\left\{\begin{array}{lc}
			(|\nabla y_m|^2+|\nabla z_m|^2)dx\overset{\ast}{\rightharpoonup}\mu-(|\nabla u|^2+|\nabla w|^2)dx,&\hbox{ in }\mathcal{M}_+^b(\mathbb{R}^4),\\
			\varphi(|y_m|,|z_m|)dx\overset{\ast}{\rightharpoonup} \nu-\varphi(u,w)dx,&\hbox{ in }\mathcal{M}_+^b(\mathbb{R}^4),\\
		\end{array}\right.
	\end{equation*}
	and we complete the proof of the lemma after applying Step 1. Note, if necessary, in Step 1 we may replace $(\nu_m)$ by  $\nu_m:=\varphi(|u_m|,|w_m|)dx$ in order to get a non-negative measure. This completes the proof of the lemma.
\end{proof}

Before proving Theorem \ref{ESGS}, we will establish an adapted version of Lemma 1.7.4 in \cite{cazenave}, which will help us to avoid the vanishing property. 
\begin{lemma}\label{lema310}
	Let $(u_m,w_m)\subset{L}^4(\mathbb{R}^4)\times {L}^4(\mathbb{R}^4)$ be such that $u_m,w_m\geq 0$ and $\int \varphi(u_m,w_m)dx=1$, for any $m\in\mathbb{N}$. Let $Q_m(R)$ be the concentration function of $\varphi(u_m,w_m)$ defined by
	$$
	Q_m(R):=\sup_{y\in\mathbb{R}^4}\int_{B(y,R)}\varphi(u_m,w_m)dx,\quad R>0.
	$$
	Then, for each $m\in\N$, there is $y=y(m,R)$ such that
	$$
	Q_m(R)=\int_{B(y,R)}\varphi(u_m,w_m)dx. 
	$$
\end{lemma}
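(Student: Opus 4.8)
The plan is to show that, for fixed $m\in\mathbb{N}$ and $R>0$, the function
$$
g(y):=\int_{B(y,R)}\varphi(u_m,w_m)\,dx=\int_{\mathbb{R}^4}\mathbf{1}_{B(y,R)}(x)\,\varphi(u_m,w_m)(x)\,dx
$$
is continuous on $\mathbb{R}^4$ and tends to $0$ as $|y|\to\infty$; a routine compactness argument then produces the maximizer. Abbreviate $\rho:=\varphi(u_m,w_m)$, so that $\rho\in L^1(\mathbb{R}^4)$, $\rho\geq 0$, and $\int\rho\,dx=1$ by hypothesis.

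First I would establish the continuity of $g$. For $y,y'\in\mathbb{R}^4$ one has
$$
|g(y)-g(y')|\leq \int_{B(y,R)\,\Delta\,B(y',R)}\rho\,dx,
$$
where $B(y,R)\,\Delta\,B(y',R)$ denotes the symmetric difference of the two balls. Since the Lebesgue measure of this symmetric difference tends to $0$ as $y'\to y$, and since $\rho\in L^1(\mathbb{R}^4)$ so the set function $E\mapsto\int_E\rho\,dx$ is absolutely continuous with respect to Lebesgue measure, the right-hand side goes to $0$; hence $g$ is (uniformly) continuous. Next, $g$ vanishes at infinity: given $\varepsilon>0$, the integrability of $\rho$ provides $\rho_0>0$ with $\int_{\{|x|>\rho_0\}}\rho\,dx<\varepsilon$, and then $|y|>\rho_0+R$ forces $B(y,R)\subset\{|x|>\rho_0\}$, so $g(y)<\varepsilon$.

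Finally, put $Q_m(R)=\sup_{y\in\mathbb{R}^4}g(y)\in[0,1]$. If $Q_m(R)=0$ any $y$ works; otherwise (note $Q_m(R)>0$ since $\rho$ is not a.e.\ zero, as $\int\rho=1$), choose $(y_k)$ with $g(y_k)\to Q_m(R)$. By the decay at infinity there is a ball $B(0,\rho_1)$ outside of which $g<Q_m(R)/2$, so $(y_k)$ lies eventually in $\overline{B(0,\rho_1)}$ and is bounded; passing to a convergent subsequence $y_{k_j}\to y=:y(m,R)$ and invoking the continuity of $g$ gives $g(y)=Q_m(R)$, which is exactly the assertion. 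There is essentially no obstacle here: the proof is elementary real analysis, in the spirit of \cite[Lemma 1.7.4]{cazenave}, and the only mild points to check are the absolute continuity of the integral (for the continuity of $g$) and the $L^1$ tail estimate (for the decay), both immediate consequences of $\varphi(u_m,w_m)\in L^1(\mathbb{R}^4)$.
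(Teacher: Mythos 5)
Your proof is correct, but it takes a genuinely different route from the paper's. You establish that $g(y)=\int_{B(y,R)}\varphi(u_m,w_m)\,dx$ is continuous (via the absolute continuity of $E\mapsto\int_E\rho$ against the Lebesgue measure of the symmetric difference of two balls) and that $g$ vanishes at infinity (via the $L^1$ tail), and then conclude by the standard ``continuous function vanishing at infinity attains its supremum'' argument. The paper instead works directly with a maximizing sequence $(y_i)$: boundedness is obtained not from decay of $g$ but from a disjoint-balls contradiction --- if $(y_i)$ were unbounded one could extract a subsequence with pairwise disjoint balls $B(y_i,R)$, and summing $\int_{B(y_i,R)}\rho\geq\epsilon$ over infinitely many of them would contradict $\int\rho=1$; and the passage to the limit along a convergent subsequence is done by dominated convergence (with $\rho$ as the dominating function and a.e.\ convergence of the indicator functions) rather than by invoking continuity of $g$. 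The two routes are morally equivalent --- dominated convergence is precisely what your continuity step encodes --- but your version isolates the two qualitative properties of $g$ (continuity, decay) as standalone facts, which is slightly cleaner and makes explicit the point $Q_m(R)>0$ that the paper uses silently; the paper's disjoint-balls argument has the minor advantage of not requiring the decay estimate. Both are complete and elementary.
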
 

\begin{proof}
	Fix $m\in\mathbb{N}$. By the definition of $Q_m$, for any $R>0$, there is $(y_i)$ in $\mathbb{R}^4$ such that
	$$
	Q_m(R)=\lim_{i\rightarrow\infty}\int_{B(y_{i},R)}\varphi(u_m,w_m)dx>0.
	$$
	Hence, there exists $i_0$ such that if $i>i_0$ then $\int_{B(y_i,R)}\varphi(u_m,w_m)dx\geq \epsilon$, where $\epsilon>0$. 
	
	Let us show that $(y_i)$ is bounded. Otherwise, there is a subsequence, still denoted by $(y_i)$, such that $B(y_j,R)\cap B(y_i,R)=\emptyset$, $\forall i\neq j$. Thus
	$$
	1=\int \varphi(u_m,w_m)dx\geq \sum_{i\geq i_0}\int_{B(y_i,R)}\varphi(u_m,w_m)dx=\infty,
	$$
	which is an absurd. Therefore $(y_j)$ has a convergent subsequence $(y_{j_k})$ with limit $y=y(m,R)$. Applying the dominated convergence theorem, we get
	$$
	Q_m(R)=\lim_{j_k\rightarrow\infty}\int_{B(y_{j_k},R)}\varphi(u_m,w_m)dx=\int_{B(y,R)}\varphi(u_m,w_m)dx,
	$$
	and the proof ie completed.
\end{proof}

\subsection{Proof of Theorem \ref{ESGS}}

Following the strategy in \cite{pastor3}, before proceeding to the proof of Theorem \ref{ESGS}, we first state the following result.
\begin{theorem}\label{TEO311}
	Suppose that $(u_m,w_m)$ is a minimizing sequence for \eqref{MinNorm} with $u_m,w_m\geq 0$. Then, up to translation and dilation $(u_m,w_m)$ is relatively compact in $\mathcal{N}$, that is, there exist a subsequence $(u_{m_j},w_{m_j})$ and sequences $(R_j)\subset\mathbb{R}$, $(y_j)\subset \mathbb{R}^4$ such that the pair $(v_j,z_j)$ given by
	$$
	v_j:=R_j^{-1}u_{m_j}(R_j^{-1}(x-y_j)),\quad  z_j:=R_j^{-1}w_{m_j}(R_j^{-1}(x-y_j)),
	$$
	strongly converges in $\mathcal{N}$ to some $(v,z)$, which minimizes \eqref{MinNorm}.
\end{theorem}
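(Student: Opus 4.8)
The strategy is the classical concentration-compactness dichotomy applied to the normalized minimizing sequence, exactly in the spirit of \cite{FM,pastor3}. Let $(u_m,w_m)$ be a minimizing sequence for \eqref{MinNorm}, so that $N(u_m,w_m)=1$ and $K(u_m,w_m)\to I$; in particular $(u_m,w_m)$ is bounded in $\dot H^1(\mathbb R^4)\times\dot H^1(\mathbb R^4)$. The first step is to \emph{renormalize via scaling and translation} so as to fix the concentration scale. Define the concentration function $Q_m(R)$ of $\varphi(u_m,w_m)$ as in Lemma \ref{lema310}; since $Q_m(R)\to 1$ as $R\to\infty$ and $Q_m(R)\to 0$ as $R\to 0$, for each $m$ we can pick $R_m>0$ with $Q_m(R_m)=1/2$, and by Lemma \ref{lema310} a center $y_m$ realizing this supremum. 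Then set
\[
v_m(x):=R_m^{-1}u_m\bigl(R_m^{-1}(x-y_m)\bigr),\qquad z_m(x):=R_m^{-1}w_m\bigl(R_m^{-1}(x-y_m)\bigr).
\]
By the scaling/translation invariance noted in \eqref{obs34ii}, $(v_m,z_m)$ is still a minimizing sequence for \eqref{MinNorm}, and its concentration function now satisfies $\sup_{y}\int_{B(y,1)}\varphi(v_m,z_m)\,dx=1/2$ for every $m$, with the supremum attained at the origin. Passing to a subsequence we may assume $(v_m,z_m)\rightharpoonup(v,z)$ in $\dot H^1\times\dot H^1$, $\mu_m:=(|\nabla v_m|^2+|\nabla z_m|^2)dx\overset{\ast}{\rightharpoonup}\mu$ and $\nu_m:=\varphi(v_m,z_m)dx\overset{\ast}{\rightharpoonup}\nu$ in $\mathcal M_+^b(\mathbb R^4)$ (all these measures are uniformly bounded since $K(v_m,z_m)$ is bounded and, by \eqref{N}, so is $N(v_m,z_m)=1$).

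The heart of the argument is to \emph{rule out vanishing and dichotomy} for the normalized sequence $\nu_m$ — strictly speaking one applies Lemma \ref{lema35} to the probability measures $\nu_m/\nu_m(\mathbb R^4)=\nu_m$ (recall $N(v_m,z_m)=1$). Vanishing would force $\sup_y\int_{B(y,1)}\varphi(v_m,z_m)\,dx\to 0$, contradicting the normalization $=1/2$. To exclude dichotomy, suppose it holds with mass $\lambda\in(0,1)$: then one splits $(v_m,z_m)$ using the cut-off functions $\chi^r_R$ of Lemma \ref{lema38} centered at the points $x_m$ supplied by the dichotomy, into a piece essentially supported in $B(x_m,R)$ carrying $N$-mass $\approx\lambda$ and a piece supported outside $B(x_m,R')$ carrying $N$-mass $\approx 1-\lambda$. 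Applying the localized Sobolev inequality of Corollary \ref{corol39} to each piece and using $N(v_m,z_m)=1$, $K(v_m,z_m)\to I$ together with $I^{-2}=C_{\rm opt}$, one obtains in the limit (as $\epsilon\to0$, $r/R\to0$) an inequality of the form $I\ge I\lambda^{1/2}+I(1-\lambda)^{1/2}$, i.e. $1\ge\lambda^{1/2}+(1-\lambda)^{1/2}$, which is impossible for $\lambda\in(0,1)$ by strict concavity of $t\mapsto t^{1/2}$. (This is precisely where the critical exponent and the sharp constant $I$ enter; the cross term $\frac19 P^3Q$ in $\varphi$ is harmless because it is dominated by $|u|^4+|w|^4$ via Young's inequality, so all estimates reduce to the scalar critical case.) Hence \emph{compactness} must occur: there is a sequence of centers — which, after a further translation, we may take to be the one already built into $(v_m,z_m)$ — such that for every $\epsilon>0$ there is $R>0$ with $\int_{B(0,R)}\varphi(v_m,z_m)\,dx\ge1-\epsilon$ uniformly in $m$.

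Finally one \emph{upgrades compactness to strong convergence and identifies the minimizer}. From compactness and Lemma \ref{lema36} one shows the weak limit $(v,z)$ is nontrivial: if $(v,z)=(0,0)$ then by \eqref{315} $\nu=\sum a_j\delta_{x_j}$ with $\sum a_j=1$; compactness forces all the mass to sit at a single point $x_0$, i.e. $\nu=\delta_{x_0}$, and then $a_{x_0}=1\le I^{-2}b_{x_0}^2$ by \eqref{317} with $b_{x_0}\le\mu(\mathbb R^4)=\lim K(v_m,z_m)=I$, giving $1\le I^{-2}I^2=1$, an equality case; a standard refinement of the estimate \eqref{320} (testing against approximate indicators of $B(x_0,r)$) then shows the limit profile must be an optimizer concentrated at $x_0$, which after one more dilation contradicts the normalization $Q_m(1)=1/2$ unless in fact no mass escapes, i.e. $(v,z)\neq(0,0)$. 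With $(v,z)\neq(0,0)$, Lemma \ref{lema36} applied to $(v,z)$ gives $\nu=\varphi(v,z)dx+\sum_j a_j\delta_{x_j}$ and $\mu\ge(|\nabla v|^2+|\nabla z|^2)dx+\sum_j b_j\delta_{x_j}$; evaluating at $\mathbb R^4$ and using weak lower semicontinuity of $K$, together with $N(v,z)\le I^{-2}K(v,z)^2$ from \eqref{DSCN}, the relations $a_j\le I^{-2}b_j^2$ and $1=N(v,z)+\sum a_j$, $I\ge K(v,z)+\sum b_j$ (in the limit), and strict concavity of $t\mapsto t^{1/2}$ once more, one concludes all $a_j=0$, hence $\nu=\varphi(v,z)dx$ so $N(v,z)=1$, and $K(v,z)\le I$; combined with the definition of $I$ this yields $K(v,z)=I$, i.e. $(v,z)$ minimizes \eqref{MinNorm}. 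Moreover $\int\varphi(v_m,z_m)\to\int\varphi(v,z)$ together with $K(v_m,z_m)\to K(v,z)$ and weak convergence in the Hilbert space $\dot H^1\times\dot H^1$ upgrades the convergence $(v_m,z_m)\to(v,z)$ to strong convergence in $\dot H^1\times\dot H^1$, hence in $\mathcal N$ (recall $\mathcal N$ carries the $\dot H^1\times\dot H^1$ topology and $N$ is continuous there). Renaming $(v,z)$ as the claimed limit finishes the proof. The main obstacle is the dichotomy exclusion: making the cut-off splitting quantitative and showing that the error terms $\delta K(v_m,z_m)$ in Corollary \ref{corol39} can be absorbed — this requires carefully choosing $\delta$ small and $r/R$ small \emph{after} fixing $\epsilon$ in the dichotomy alternative, and keeping track of the fact that the two pieces have gradient energies that add up to at most $\limsup K(v_m,z_m)+o(1)=I+o(1)$.
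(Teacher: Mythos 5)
Your outline follows the paper's proof closely---renormalize by scaling and translation so that the concentration function of $\varphi(v_m,z_m)$ equals $1/2$ at scale one, invoke Lemma~\ref{lema35}, rule out vanishing from the normalization and dichotomy from Corollary~\ref{corol39}, then apply Lemma~\ref{lema36} to identify the limit. One small slip first: with $R_m$ chosen so that $Q_m(R_m)=1/2$, the rescaling you wrote, $v_m=R_m^{-1}u_m(R_m^{-1}(\cdot-y_m))$, places the $1/2$ mark at scale $R_m^{-2}$, not at scale $1$; you should either set $v_m=R_mu_m(R_m(\cdot-y_m))$ or choose $R_m$ via $Q_m(1/R_m)=1/2$ as in the paper's Step~1.

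The genuine gap is in the identification step. You split on whether $(v,z)=(0,0)$, and in that case you assert that ``compactness forces all the mass to sit at a single point $x_0$''; this does not follow. Compactness (tightness) only gives $\nu(\mathbb R^4)=1$, whereas concluding that $\nu$ is a single Dirac requires first the concavity chain
\begin{equation*}
I\ \geq\ \mu(\mathbb R^4)\ \geq\ K(v,z)+\sum_j b_j\ \geq\ I\Big[N(v,z)^{1/2}+\sum_j a_j^{1/2}\Big]\ \geq\ I\Big[N(v,z)+\sum_j a_j\Big]^{1/2}=I,
\end{equation*}
whose forced equalities, by strict concavity of $t\mapsto t^{1/2}$, show that at most one of $N(v,z),a_1,a_2,\dots$ is nonzero. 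You invoke this chain only in the case $(v,z)\neq(0,0)$, leaving the case $(v,z)=(0,0)$ without the tool that actually singles out a Dirac. Moreover, the contradiction you sketch afterwards (``a standard refinement of \eqref{320}\dots after one more dilation contradicts $Q_m(1)=1/2$'') is not justified and is not how the atom is excluded; the clean argument, which also makes the case split unnecessary, is: once equality in the chain forces $\nu=a_{j_0}\delta_{x_{j_0}}$ with $a_{j_0}=\nu(\mathbb R^4)=1$, the normalization $\sup_y\int_{B(y,1)}\varphi(v_m,z_m)\,dx=1/2$ together with weak convergence gives $1=\nu(B(x_{j_0},1))=\lim_m\nu_m(B(x_{j_0},1))\leq 1/2$, a contradiction (paper's Step~6). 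Finally, you never establish that $(\nu_m)$ is uniformly tight, which is what upgrades vague to weak convergence and thus gives $\nu(\mathbb R^4)=1$ and $\nu_m(B(x_{j_0},1))\to\nu(B(x_{j_0},1))$; the paper's Step~4 does this by showing the compactness centers $x_m$ must intersect $B(0,1)$ (else the total $N$-mass would exceed $1$), whence all the mass stays in a fixed ball $B(0,2R+1)$. Without this step the chain above only bounds $\nu(\mathbb R^4)\leq 1$, and no contradiction results.
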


\begin{proof}
By definition of a minimizing sequence for \eqref{MinNorm}, we have
	\begin{equation*}
		\lim_{m\rightarrow\infty}K(u_m,w_m)=I,\quad N(u_m,w_m)=\int \varphi(u_m,w_m)dx=1,\forall m.
	\end{equation*}
In order to make the proof as simple as possible, we will proceed into 6 steps.\\
	\textbf{Step 1.} There exist sequences $(R_m)$ in $\mathbb{R}$ and $(y_m)$ in $\mathbb{R}^4$ such that
	\begin{equation*}
		v_m:=R^{-1}u_{m}(R_m^{-1}(x-y_m)),\quad z_m:=R^{-1}w_{m}(R_m^{-1}(x-y_m))
	\end{equation*}
	satisfies
	\begin{equation}\label{341}
		\sup_{y\in\mathbb{R}^4}\int_{B(y,1)}\varphi(v_m,z_m)dx=\int_{B(0,1)}\varphi(v_m,z_m)dx=\frac{1}{2}.
	\end{equation}
	To show that, let us take $ R>0,s\in\mathbb{R}^4$ and consider the following scaling
	$$
	v_m^{R,s}:=R^{-1}u_m(R^{-1}(x-s)),\quad z_m^{R,s}:=R^{-1}w_m(R^{-1}(x-s)).
	$$
	From \eqref{obs34ii} we get $K(v_m^{R,s},z_m^{R,s})=K(u_m,w_m)$ and $N(v_m^{R,s},z_m^{R,s})=N(u_m,w_m)=1$. Let us consider the concentration function corresponding to  $\varphi(v_m,z_m)$ given by
	$$
	Q_m^{R,s}(t)=\sup_{y\in\mathbb{R}^4}\int_{B(y,t)}\varphi(v_m^{R,s}(x),z_m^{R,s}(x))dx.
	$$
	A change of variables gives $Q_m(t/R)=Q_m^{R,s}(t)$ for all $t\geq 0$ and $s\in\mathbb{R}^4$, where $Q_m$ is defined as in Lemma \ref{lema310}. In particular, for all $m$, $Q_m$ is a non-decreasing function with $Q_m(0)=0$, $Q_m(1/R)=Q_m^{R,s}(1)$ and $Q_m(t)\rightarrow 1$ as $t\rightarrow\infty$. Therefore,
	$$
	\lim_{R\rightarrow 0^+}Q_m^{R,s}(1)=\lim_{R\rightarrow 0^+}Q_m(1/R)=1.
	$$
	Consequently, for any $m$ we can find $R_m>0$ obeying
	\begin{equation*}
		Q_m^{R_m,s}(1)=Q_m(1/R_m)=\frac{1}{2},\quad \forall s\in\mathbb{R}^4,
	\end{equation*}
	i.e., 
	\begin{equation}\label{343}
		\sup_{y\in\mathbb{R}^4}\int_{B(y,1)}\varphi(v_m^{R_m,s},z_m^{R_m,s})dx=Q_m^{R_m,s}(1)=\frac{1}{2},\quad \forall s\in\mathbb{R}^4.
	\end{equation}
	
	On the other hand, since $\int \varphi(v_m^{R_m,s},z_m^{R_m,s})dx=1$ and $v_m^{R_m,s},z_m^{R_m,s}\geq 0$, Lemma \ref{lema310} gives us $y_m\in\mathbb{R}^4$ obeying
	\begin{equation*}
		\begin{split}
			\sup_{y\in\mathbb{R}^4}\int_{B(y,1)}&\varphi(v_m^{R_m,s}(x),z_m^{R_m,s}(x))dx=\int_{B(y_m,1)}\varphi(v_m^{R_m,s}(x),z_m^{R_m,s}(x))dx\\
			&=\int_{B(0,1)}\varphi(R_m^{-1}u_m(R^{-1}_m(r+y_m-s)),R_m^{-1}w_m(R_m^{-1}(r+y_m-s)))dr,\\
		\end{split}
	\end{equation*}
	where we used the change of variables $x=r+y_m$. Choosing $s=2y_m$ and using \eqref{343}, we get
	\begin{equation*}
		\begin{split}
			\int_{B(0,1)}&\varphi(R_m^{-1}u_m(R_m^{-1}(r-y_m),R_m^{-1}w_m(R_m^{-1}(r-y_m))dr\\
			&=\sup_{y\in\mathbb{R}^4}\int_{B(y,1)}\varphi(R_m^{-1}u_m(R_m^{-1}(x-2y_m)),R_m^{-1}w_m(R_m^{-1}(x-2y_m)))dx\\
			&=Q_m^{R_m,2y_m}(1)\\
			&=\frac{1}{2},
		\end{split}
	\end{equation*}
	which is the second equality in \eqref{341}. For the first one, observe that
	\begin{equation*}
		\begin{split}
			\sup_{y\in\mathbb{R}^4}\int_{B(y,1)}\varphi(v_m,z_m)dx&=\sup_{y\in\mathbb{R}^4}\int_{B(y,1)}\varphi(R_m^{-1}u_m(R_m^{-1}(x-y_m)),R_m^{-1}w_m(R_m^{-1}(x-y_m)))dx\\
			&=\sup_{y\in\mathbb{R}^4}\int_{B(y,1)}\varphi(v_m^{R_m,y_m},z_m^{R_m,y_m})dx\\
			&=\frac{1}{2},\\
		\end{split}
	\end{equation*}
	where in the last equality we used \eqref{343}.
	
	Next, from \eqref{obs34ii} and Step 1, $(v_m,z_m)$ is also a minimizing sequence for \eqref{MinNorm} with $v_m,z_m\geq 0$, that is, 
	\begin{equation}\label{344}
		\lim_{m\rightarrow\infty} K(v_m,z_m)=I,\quad N(v_m,z_m)=\int \varphi(v_m,z_m)dx=1,\forall m\in\mathbb{N}.
	\end{equation}
	Particularly, $(v_m,z_m)$ is uniformly bounded in $\mathcal{N}$. Thus, there exist $(v,z)\in\dot{H}^1(\mathbb{R}^4)\times\dot{H}^1(\mathbb{R}^4)$ such that, up to a subsequence, 
	\begin{equation}\label{345}
		(v_m,z_m)\rightharpoonup (v,z)\quad \mbox{in }  \dot{H}^1(\mathbb{R}^4)\times\dot{H}^1(\mathbb{R}^4).
	\end{equation}
	Let us show that $(v_m,z_m)\rightarrow (v,z)$ in $\mathcal{N}$ and $(v,z)$ is a minimizer for \eqref{MinNorm}. Indeed, as in the proof of Lemma \ref{lema36} we have $v,z\geq 0$. Set the sequence of measures 
	\begin{equation*}
		\mu_m:=(|\nabla v_m|^2+|\nabla z_m|^2)dx \quad \mbox{and} \quad \nu_m:=\varphi(v_m,z_m)dx.
	\end{equation*}
	The identity in \eqref{344} gives that $(\nu_m)$ is a  sequence of probability measures. Thus, by Lemma \ref{lema35}, up to a subsequence, occurs one of the following cases: vanishing, dichotomy or compactness. The idea now is to exclude the vanishing and dichotomy cases.
	
	\textbf{Step 2} Vanishing does not occur.
	
	Indeed, in view of \eqref{341} it follows that for $R=1$
	$$
	\lim_{m\rightarrow\infty}\sup_{y\in\mathbb{R}^4}\nu_m(B(y,1))\geq\frac{1}{2}.
	$$
	
	\textbf{Step 3.} Dichotomy does not occur.
	
	Suppose the opposite. Then, there is $\lambda\in(0,1)$ such that for all $\epsilon>0$, there exist $R>0$ and a sequence $(x_m)$ in $\mathbb{R}^4$ such that given $R'>R$ and $m$ sufficiently large,
	\begin{equation}\label{347}
		\nu_m(B(x_m,R)\geq \lambda-\epsilon,\quad \nu_m(\mathbb{R}^4\backslash B(x_m,R'))\geq 1-\lambda-\epsilon.
	\end{equation}
	Thus, for $m$ sufficiently large, fixing $\delta>0$, Corollary \ref{corol39} yields that choosing $\rho$ satisfying $R<\rho<R'$ with $\rho/R'\leq C(\delta)$ and $R/\rho\leq C(\delta)$,
	$$
	\int_{B(x_m,R)}\varphi(v_m,z_m)dx\leq I^{-2}\left[\int_{B(x_m,\rho)}|\nabla v_m|^2+|\nabla z_m|^2dx+\delta K(v_m,z_m)\right]^{2}
	$$
	and
	$$
	\int_{\mathbb{R}^4\backslash B(x_m,R')}\varphi(v_m,z_m)dy\leq I^{-2}\left[\int_{\mathbb{R}^4\backslash B(x_m,\rho)}|\nabla v_m|^2+|\nabla z_m|^2dy+(2\delta +\delta^2) K(u,w)\right]^{2}.
	$$
	Combining both inequalities with \eqref{347}, we get
	\begin{equation}\label{348}
		I\left[(\lambda-\epsilon)^{1/2}+(1-\lambda-\epsilon)^{1/2}\right]\leq K(v_m,z_m)+(3\delta+\delta^2)K(v_m,z_m).
	\end{equation}
	From \eqref{344} the right-hand side of \eqref{348} is bounded by $K(v_m,z_m)+(3\delta+\delta^2)M$, where $M>0$ does not depend on $m$. Therefore, taking $\delta,\epsilon\rightarrow 0$ and $m\rightarrow\infty$ leads to
$I[\lambda^{1/2}+(1-\lambda)^{1/2}]\leq I,$
	that is, $\lambda^{1/2}+(1-\lambda)^{1/2}\leq 1$. But this contradicts the fact that if $\lambda\in(0,1)$ then  $\lambda^{1/2}+(1-\lambda)^{1/2}>1$. Hence, dichotomy does not occurs.
	
	Thereby, Lemma \ref{lema35} implies that compactness occurs, that is, there is a sequence $(x_m)$ in $\mathbb{R}^4$ such that for all $\epsilon>0$ there is a radius $R>0$ such that
	\begin{equation}\label{350}
		\nu_m(B(x_m,R))\geq 1-\epsilon,\quad \forall m.
	\end{equation}
	
	\textbf{Step 4.} The sequence $(\nu_m)$ is uniformly tight.
	
	Indeed, we first show that $B(x_m,R)\cap B(0,1)\neq\emptyset$, for all $m$. Otherwise, there is $m_0$ such that $B(x_{m_0},R)\cap B(0,1)=\emptyset$. Taking $\epsilon\in(0,1/2)$ in \eqref{350} leads  to
	$$
	\int_{B(x_{m_0},R)}\varphi({v}_{m_0},w_{m_0})dx>\frac{1}{2}.
	$$
	Combining with \eqref{341}, we have
	$$
	\int \varphi(v_{m_0},w_{m_0})dx\geq \int_{B(x_{m_0},R)}\varphi(v_{m_0},w_{m_0})dx +\int_{B(0,1)}\varphi(v_{m_0},w_{m_0})dx>\frac{1}{2}+\frac{1}{2}=1,
	$$
	which is a contradiction with \eqref{344}. 
	
	Now, since $B(x_m,R)\subset B(0,2R+1)$, for all $m$, \eqref{350} yields
	$$
	\nu_m(B(0,2R+1))\geq 1-\epsilon, \quad\forall m.
	$$
	Then, because $(\nu_m)$ is a sequence of probability measures, 
	$$
	\nu_m\left(\mathbb{R}^4\backslash \overline{B(0,2R+1)}\right)=1-\nu_m(B(0,2R+1))\leq\epsilon,\quad\forall m.
	$$
	that is, $(\nu_m)$ uniformly tight.
	
	\textbf{Step 5.} Up to a subsequence, $(\nu_m)$ weakly converge to some $\nu\in\mathcal{M}_+^1(\mathbb{R}^4)$.
	
	In fact, note that for each $f\in{C}_0(\mathbb{R}^4)$,
	$$
	\left| \int f d\nu_m\right|\leq\Vert f\Vert_{L^\infty}\nu_m(\mathbb{R}^4)=\Vert f\Vert_{L^\infty}<\infty.
	$$
	Hence, by Theorems 31.2 and 30.6 in \cite{Bauer}, there is $\nu\in\mathcal{M}^b_+(\mathbb{R}^4)$ such that, up to a subsequence, $\nu_m\rightharpoonup\nu$ weakly in $\mathcal{M}^b_+(\mathbb{R}^4)$, that is, 
	\begin{equation}\label{351}
		\int fd\nu_m\rightarrow\int fd\nu,\quad \forall f\in{C}_b(\mathbb{R}^4).
	\end{equation}
	In particular, taking $f\equiv 1$, we have
	\begin{equation}\label{352}
		\nu(\mathbb{R}^4)=\lim_{m\rightarrow\infty}\nu_m(\mathbb{R}^4)=1,
	\end{equation}
	which implies that $\nu\in\mathcal{M}^1_+(\mathbb{R}^4).$
	
	Now, since $K(v_m,z_m)$ is uniformly bounded, then  $(\mu_m)$ is vaguely bounded. Therefore, up to a subsequence, there is $\mu\in\mathcal{M}_+^b(\mathbb{R}^4)$ obeying
	\begin{equation}\label{353}
		\mu_m \overset{\ast}{\rightharpoonup}\mu\quad\mbox{in}\quad \mathcal{M}_+^b(\mathbb{R}^4).
	\end{equation}
	Thus, with \eqref{345}, \eqref{351} and \eqref{353} in hand, we can use Lemma \ref{lema36} to get
	\begin{equation}\label{354}
		\mu\geq (|\nabla v|^2+|\nabla z|^2)dx+\sum_{j\in J}b_j\delta_{x_j},\quad \nu=\varphi(v,z)dx+\sum_{j\in J}a_j\delta_{x_j}
	\end{equation}
	for a family $\{x_j\in\mathbb{R}^4;\, j\in J\}$ with $J$ at most countable and $a_j,b_j\geq 0$ satisfying
	\begin{equation}\label{355}
		a_j\leq I^{-2}b_j^2,\quad \forall j\in J
	\end{equation}
	with $\sum_{j\in J}a_j^{1/2}$ convergent. Hence, \eqref{DSCN}, \eqref{352} and \eqref{355} lead to
	\begin{equation}\label{356}
		\begin{split}
			I&=\liminf_{m\rightarrow\infty}\mu_m(\mathbb{R}^4)\geq\mu(\mathbb{R}^4)
			\geq K(v,z)+\sum_{j\in J}b_j
			\geq I\left[N(v,z)^{1/2}+\sum_{j\in J}a_j^{1/2}\right]\\
			&\geq I\left[N(v,z)+\sum_{j\in J}a_j\right]^{1/2}
			=I[\nu(\mathbb{R}^4)]^{1/2}
			=I,
		\end{split}
	\end{equation}
	where we have used that $\lambda\mapsto \lambda^{1/2}$ is a concave function. Then, for all the inequalities in \eqref{356} to be in fact equalities, it is necessary that at most one of the terms $N(v,z)$ or $a_j,\, j\in J$ must be different from zero. 
	
	\textbf{Step 6.} $a_j=0$ for all $j\in J$.
	
	Suppose that there exist $j_0\in J$ such that $a_{j_0}\neq 0$. Then, from \eqref{352} and the decomposition \eqref{354} it follows that $\nu=a_{j_0}\delta_{x_{j_0}}$, and hence
	\begin{equation}\label{357}
		1=\nu(\mathbb{R}^4)=a_{j_0}.
	\end{equation}
	From \eqref{341} we get
	$$
	\frac{1}{2}\geq \int_{B(x_{j_0},1)}\varphi(v_m,z_m)dx=\nu_m(B(x_{j_0},1)),\quad \forall m.
	$$
In view of the weak convergence \eqref{351},
	$$
	\frac{1}{2}\geq\lim_{m\rightarrow\infty}\nu_m(B(x_{j_0},1))=\nu(B(x_{j_0},1))=\int_{B(x_{j_0},1)}d\nu=a_{j_0},
	$$
 which contradicts  \eqref{357}.
	
	With this in hand, we must be in the case $\nu=\varphi(u,v)dx$ and from \eqref{352}, we obtain
	\begin{equation}\label{358}
		N(v,z)=\int \varphi(v,z)dx=1,
	\end{equation}
	which means that $(v,z)\in\mathcal{N}$. 
	
	To show that $(v,z)$ is a minimizer for \eqref{MinNorm}, it remains to guarantee that $K(v,z)=I$. But, from the definition of $I$ and \eqref{358} it follows that $I\leq K(v,z)$. On the other hand, the lower semi-continuity of the weak convergence \eqref{345}, gives 
	$$
	K(v,z)\leq \liminf K(v_m,z_m)=I.
	$$
	Thus $K(v,z)=I$ and $(v_m,z_m)\rightarrow (v,z)$ strongly in $\mathcal{N}$, completing the proof of the theorem. 
\end{proof}

As an immediate consequence of Theorem \ref{TEO311} we have.

\begin{corollary}
	There is $(v,z)\in\mathcal{N}$ satisfying $N(v,z)=1$ and $K(v,z)=C_{\rm opt}^{-1/2}$, where $C_{\rm opt}$ is the optimal constant in the critical Sobolev-type inequality \eqref{N}.
\end{corollary}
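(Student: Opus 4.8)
The plan is to read the statement off Theorem~\ref{TEO311} combined with the equivalence recorded in Lemma~\ref{equi}. First I would start from a minimizing sequence $(u_m,w_m)$ for the normalized problem \eqref{MinNorm}; by the observation made just before Lemma~\ref{equi} (namely $K(|u|,|w|)\le K(u,w)$ and $N(u,w)\le N(|u|,|w|)$ a.e.) one may assume without loss of generality that $u_m,w_m\ge 0$, so the hypotheses of Theorem~\ref{TEO311} are met.

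Next I would apply Theorem~\ref{TEO311}: up to translation and dilation, a subsequence of $(u_m,w_m)$ converges strongly in $\mathcal{N}$ to a pair $(v,z)$ that minimizes \eqref{MinNorm}. Hence $(v,z)\in\mathcal{N}$, $N(v,z)=1$, and $K(v,z)=I$, where $I$ is the infimum in \eqref{MinNorm}. Note $I>0$, since the Sobolev inequality \eqref{N} forces $1=N(v,z)\le CK(v,z)^2$, so that $C_{\rm opt}$ is a genuine positive constant and $C_{\rm opt}^{-1/2}$ makes sense.

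Finally I would translate $K(v,z)=I$ into the language of the optimal constant. By Lemma~\ref{equi} we have $C_{\rm opt}=I^{-2}$, equivalently $I=C_{\rm opt}^{-1/2}$; therefore $K(v,z)=C_{\rm opt}^{-1/2}$ together with $N(v,z)=1$, which is exactly the assertion. (Equivalently, by Remark~\ref{obs34i} the minimizer $(v,z)$ of \eqref{MinNorm} also minimizes \eqref{Min}, so $J(v,z)=C_{\rm opt}^{-1}$, and since $N(v,z)=1$ this again gives $K(v,z)^2=C_{\rm opt}^{-1}$.)

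Since all the analytic difficulty—constructing the concentration function, excluding vanishing via the normalization \eqref{341}, and excluding dichotomy via the localized Sobolev inequality of Corollary~\ref{corol39} and the strict concavity of $\lambda\mapsto\lambda^{1/2}$—is already handled inside the proof of Theorem~\ref{TEO311}, there is no substantive obstacle at this stage. The only point requiring a modicum of care is bookkeeping the scaling invariance \eqref{obs34ii}, so that the rescaled sequence remains a minimizing sequence with non-negative components; but that is precisely Step~1 in the proof of Theorem~\ref{TEO311}.
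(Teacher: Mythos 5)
Your proposal matches the paper's own (implicit) proof: the corollary is stated as an immediate consequence of Theorem~\ref{TEO311}, which yields $(v,z)\in\mathcal{N}$ with $N(v,z)=1$ and $K(v,z)=I$, and Lemma~\ref{equi} then gives $I=C_{\rm opt}^{-1/2}$. Your additional remark that $I>0$ (so that $C_{\rm opt}^{-1/2}$ is well-defined) is a small but correct piece of bookkeeping the paper leaves tacit.
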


We are now in a position to prove Theorem \ref{ESGS}. 

\begin{proof}[Proof of Theorem \ref{ESGS}]
	We start applying Theorem \ref{TEO311} to get a minimizer of \eqref{MinNorm}, which will be denoted by $(v,z)$. By Lagrange's multiplier theorem, there is a constant $\lambda$ such that for any pair $(f,g)\in\dot{H}^1(\mathbb{R}^4)\times\dot{H}^1(\mathbb{R}^4)$ it holds
	\begin{equation*}
		\begin{split}
			&2\int \nabla v\cdot \nabla fdx=\lambda\int\left( \frac{1}{9}v^3+2z^2v+\frac{1}{3}v^2z\right)fdx,\\
			&2\int \nabla z\cdot\nabla gdx= \lambda\int \left(9z^3+2v^2z+\frac{1}{9}v^3\right)gdx.
		\end{split}
	\end{equation*}
	Taking $f=v$ and $g=z$ and summing up both equations we see that  $2K(v,z)=4\lambda N(v,z)=4\lambda$ and consequently,   $\lambda> 0$. Next, by setting $(P_0,Q_0):=\left(\frac{\lambda}{2}\right)^{\frac{1}{2}}(v,z)$ we deduce that $(P_0,Q_0)$ is non-trivial. Let us show that $(P_0,Q_0)$ is indeed a ground state solution for \eqref{SIST3}. Note that
	\begin{equation*}
		\begin{split}
			\int \nabla P_0\cdot\nabla fdx &= \left(\frac{\lambda}{2}\right)^{\frac{1}{2}}\int \nabla v\cdot \nabla fdx\\
			&=\int \left(\frac{\lambda}{2}\right)^{\frac{3}{2}}\left(\frac{1}{9}v^3+2z^2v+\frac{1}{3}v^2z\right)fdx\\
			&=\int \left(\frac{1}{9}P_0^3+2Q_0^2P_0+\frac{1}{3}P_0^2Q_0\right)fdx\\
		\end{split}
	\end{equation*}
	and
	\begin{equation*}
		\begin{split}
			\int \nabla Q_0\cdot\nabla g dx&= \left(\frac{\lambda}{2}\right)^{\frac{1}{2}}\int \nabla z\cdot \nabla gdx\\
			&=\int \left(\frac{\lambda}{2}\right)^{\frac{3}{2}}\left(9z^3+2v^2z+\frac{1}{9}v^3\right)gdx\\
			&=\int \left(9Q_0^3+2P_0^2Q_0+\frac{1}{9}P_0^3\right)gdx.\\
		\end{split}
	\end{equation*}
	Therefore $(P_0,Q_0)$ is a solution of \eqref{SIST3}. Also, since $K$ is homogeneous of degree two,
	$$
	J(P_0,Q_0)=\frac{K(P_0,Q_0)^2}{N(P_0,Q_0)}=\frac{K\left(\left(\frac{\lambda}{2}\right)^{\frac{1}{2}}(v,z)\right)^2}{N\left(\left(\frac{\lambda}{2}\right)^{\frac{1}{2}}(v,z)\right)}=\frac{\left(\frac{\lambda}{2}\right)^{2}}{\left(\frac{\lambda}{2}\right)^{2}}\frac{K(v,z)^2}{N(v,z)}=J(v,z).
	$$
	Since $(v,z)$ is a minimizer \eqref{MinNorm}, Remark \ref{obs34i} yields that it is also a minimizer of \eqref{Min} and in view of the above identity so is $(P_0,Q_0)$. Remark \ref{obs32} now gives that $(P_0,Q_0)$ is a ground state solution of \eqref{SIST3}.
\end{proof}

\begin{corollary}\label{corbest}
	The inequality 
	\begin{equation}\label{360}
		N(u,w)\leq C_{\rm opt}K(u,w)^{2},
	\end{equation}
	holds for all $(u,w)\in\mathcal{N}$, with the optimal constant given by
	\begin{equation}\label{361}
		C_{\rm opt}=\frac{1}{16\mathcal{E}(P,Q)},
	\end{equation}
	where $(P,Q)$ is any ground state solution of \eqref{SIST3} and $\mathcal{E}$ is defined in \eqref{E}.
\end{corollary}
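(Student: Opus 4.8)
The plan is to read off both statements directly from the minimization problems \eqref{Min}--\eqref{MinNorm} and the structural facts about ground states already recorded in Remark \ref{obs32}; essentially no new analysis is required, only a careful chaining of identities.

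For the inequality \eqref{360} itself, I would simply note that by Lemma \ref{equi} and Remark \ref{obs34i} the sharp form of the critical inequality \eqref{N} over $\mathcal{N}$ is precisely \eqref{DSCN}, i.e. $N(u,w)\le I^{-2}K(u,w)^2$ for every $(u,w)\in\mathcal{N}$; since $C_{\rm opt}=I^{-2}$, this is exactly \eqref{360}. Equivalently, unraveling the definition \eqref{Min}: for any $(u,w)\in\mathcal{N}$ one has $J(u,w)=K(u,w)^2/N(u,w)\ge C_{\rm opt}^{-1}$, which rearranges to \eqref{360}.

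The only substantive point is to identify $C_{\rm opt}^{-1}$ with $16\,\mathcal{E}(P,Q)$ for an \emph{arbitrary} ground state $(P,Q)$. Here I would use that the proof of Theorem \ref{ESGS} manufactures, out of a minimizer $(v,z)$ of \eqref{MinNorm} (equivalently of \eqref{Min}, by Remark \ref{obs34i}), a non-trivial solution $(P_0,Q_0)=(\lambda/2)^{1/2}(v,z)$ of \eqref{SIST3}; since $J$ is invariant under $(u,w)\mapsto(tu,tw)$ (because $K$ and $N$ are homogeneous of degrees $2$ and $4$), $J(P_0,Q_0)=J(v,z)=C_{\rm opt}^{-1}$. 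As $\mathcal{C}\subset\mathcal{N}$ by Lemma \ref{lema31}, $(P_0,Q_0)$ then minimizes $J$ over $\mathcal{C}$ as well, hence is a ground state by Remark \ref{obs32}, and $C_{\rm opt}^{-1}=J(P_0,Q_0)=16\,\mathcal{E}(P_0,Q_0)$, the last equality being the computation $J=16N=16S=16\mathcal{E}$ on solutions from Remark \ref{obs32}.

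It remains to pass from $(P_0,Q_0)$ to a general $(P,Q)\in\mathcal{G}$: by Definition \ref{gsdef}(ii) all ground states have the same action, $S(P,Q)=\inf_{\mathcal{C}}S=S(P_0,Q_0)$, so by Remark \ref{obs32}(i) $\mathcal{E}(P,Q)=\mathcal{E}(P_0,Q_0)$, whence $16\,\mathcal{E}(P,Q)=C_{\rm opt}^{-1}$, which is \eqref{361}. I do not expect a genuine obstacle here; the two spots to be careful about are the scale-invariance of $J$ (so that the Lagrange-multiplier rescaling in the proof of Theorem \ref{ESGS} does not alter the minimal value) and the observation that a minimizer of \eqref{Min} over $\mathcal{N}$ automatically minimizes $J$ over the smaller set $\mathcal{C}$ of genuine solutions, which is exactly what lets the value $C_{\rm opt}^{-1}$ be read off any ground state.
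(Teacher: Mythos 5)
Your proposal is correct and follows the same route as the paper's proof, which is essentially a one-line chaining of identities from Remark \ref{obs32} and \eqref{Min}: $C_{\rm opt}^{-1}=J(P,Q)=16S(P,Q)=16\mathcal{E}(P,Q)$. You simply fill in the details the paper leaves implicit — that the $J$-value of the ground state $(P_0,Q_0)$ manufactured in the proof of Theorem \ref{ESGS} equals $\inf_{\mathcal{N}}J=C_{\rm opt}^{-1}$ by scale invariance of $J$, and that any other ground state shares the same action and hence the same $J$-value — which makes the argument more transparent but does not change its substance.
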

\begin{proof}
For any ground state $(P,Q)$, from Remark \ref{obs32} and \eqref{Min}, we must have 
	$$
	 C_{\rm opt}^{-1}=J(P,Q)=16S(P,Q)=16\mathcal{E}(P,Q),
	$$
	which is the desired.
\end{proof}

\begin{remark}
We do not know any result concerning the uniqueness of ground state solutions for \eqref{SIST3}. However, note that in view of Lemma \ref{equi}, $	C_{\rm opt}$ is indeed a constant which does not depend on the choice of the ground state.
\end{remark}

\section{Blow-up of radially symmetric solutions}\label{blowup}

This section is devoted to prove Theorem \ref{blow4}.
As mentioned  before, we start considering, for  $\phi\in C^\infty(\mathbb{R}^4)$ and $(u,w)$ solution of \eqref{SIST1}, the function
$$
\mathcal{V}(t)=\int \phi(x)(|u|^2+\sigma^2|w|^2)dx.
$$
It is not difficult to see that 
\begin{equation}\label{VV}
	\mathcal{V}'(t)=2\hbox{Im}\int \nabla\phi\cdot(\widebar{u}\nabla u+\sigma\widebar{w}\nabla w)dx-2\hbox{Im}\int \phi\widebar{u}f(u,w)+\sigma\widebar{w}g(u,w))dx.
\end{equation}
As mentioned before, since the second term in \eqref{VV} does not necessarily vanishes, we will follow the ideas presented in \cite{inui} and work with radially symmetric solutions and the function
\begin{equation}\label{R}
	\mathcal{R}(t)=2\hbox{Im}\int\nabla\phi\cdot(\bar{u}\nabla u+\sigma\bar{w}\nabla w)dx
\end{equation}
instead of $\mathcal{V}$. Following the strategy presented in \cite[Lemma 2.9]{kavian}, we may derive $\mathcal{R}$ to obtain
\begin{equation*}
	\begin{split}
		\mathcal{R}'(t)=& 4\sum_{1\leq m,j\leq 4} \hbox{Re}\int \frac{\partial^2\phi}{\partial{x_m} \partial{x_j}}(\partial_{x_j} \bar{u}\partial_{x_m}u+\partial_{x_j}\bar{w}\partial_{x_m} w)dx\\
		&-\int \Delta^2\phi(|u|^2+|w|^2)dx-\hbox{Re}\int \Delta\phi H(u,w)dx,\\
	\end{split}
\end{equation*} 
where $H(u,w):=\bar{u}f(u,w)+\bar{w}g(u,w)$. Now, observe that if  $u_0,w_0$ are radially symmetric, so are the respective solutions $u,w$. Besides, if we also take $\phi$ to be radially symmetric, we can write  $\phi(x)=\phi(|x|)$, $u(x)=(|x|)$ and $w(x)=w(|x|)$. Then, for $r=|x|$, we have
$$
\sum_{1\leq m,j\leq 4}\hbox{Re}\frac{\partial^2\phi}{\partial{x_m} \partial{x_j}}(\partial_{x_j} \bar{u}\partial_{x_m}u+\partial_{x_j}\bar{w}\partial_{x_m} w)=\phi''(r)\left(|\nabla u|^2+|\nabla w|^2\right).
$$
In this case, we may rewrite $\mathcal{R'}$ as
\begin{equation}\label{R'rs}
	\mathcal{R}'(t)=4\int\phi''(|\nabla u|^2+|\nabla w|^2)dx-\int\Delta^2\phi(|u|^2+|w|^2)dx-\hbox{Re}\int\Delta\phi H(u,w)dx.
\end{equation}

Let us introduce the functional
$$
\mathcal{P}(u,w)=\int\left(\frac{1}{36}|u|^4+\frac{9}{4}|w|^4+|u|^2|w|^2+\frac{1}{9}\hbox{Re}(\bar{u}^3w)\right)dx.
$$
Observe that
\begin{equation*}
	H(u,w)=\widebar{u}f(u,w)+\widebar{w}g(u,w)=\frac{1}{9}|u|^4+9|w|^4+4|u|^2|w|^2+\frac{4}{9}\widebar{u}^3w,
\end{equation*}
and consequently
$$
\hbox{Re}\int H(u,w)dx=\int \left(\frac{1}{9}|u|^4+9|w|^4+4|u|^2|w|^2+\frac{4}{9}\hbox{Re}(\widebar{u}^3w)\right)dx=4 \mathcal{P}(u,w).
$$
Now, we introduce the functional
\begin{equation*}
	\tau(u,w)=K(u,w)-4\mathcal{P}(u,w).
\end{equation*}
Such a functional is sometimes called ``Pohozaev'' functional, because it is closed related with the so-called Pohozaev identities (see \cite[Lemma 2.2]{oliveira}).
Using the definitions of the energy \eqref{Energia}  we may rewrite
\begin{equation}\label{411}
	\tau(u,w)=4E(u,w)-K(u,w)-2\int (|u|^2+\mu|w|^2)dx.
\end{equation}

The next result shows that $\tau$ must be strictly negative when evaluated at a solution of \eqref{SIST1}.

\begin{lemma}\label{lema delta}
	Assume that $(u_0,w_0)\in H^1(\mathbb{R}^4)\times H^1(\mathbb{R}^4)$ and let $(u,w)$ be the corresponding solution of \eqref{SIST1} defined in the maximal time interval of existence $I$. If
	\begin{equation}\label{41}
		E(u_0,w_0)<\mathcal{E}(P,Q)
	\end{equation}
	and
	\begin{equation}\label{42}
		K(u_0,w_0)>K(P,Q),
	\end{equation}
	where $(P,Q)$ is any ground state and $\mathcal{E}$ is the energy in \eqref{E}, then there exists $\delta>0$ such that $\tau(u(t),w(t))\leq-\delta<0$, for all $t\in I$.
\end{lemma}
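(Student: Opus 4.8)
The plan is to avoid any time differentiation and instead combine conservation of the energy $E$ with the sharp Sobolev‑type inequality of Corollary~\ref{corbest}, feed the resulting polynomial inequality into Lemma~\ref{lema comp1} to propagate the constraint on $K$, and then read off the sign of $\tau$ from identity~\eqref{411}.

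First I would observe that, for every $t\in I$, conservation of the energy together with the fact that the $L^2$‑terms in \eqref{Energia} are non‑negative gives
\[
E_0:=E(u_0,w_0)=E(u(t),w(t))\ \ge\ \tfrac12 K(u(t),w(t))-\mathcal{P}(u(t),w(t)).
\]
To bound $\mathcal{P}$ I use $\mathrm{Re}(\bar u^3 w)\le |u|^3|w|$, which gives $\mathcal{P}(u,w)\le N(|u|,|w|)$; if $N(|u|,|w|)=0$ this is non‑positive and there is nothing to do, while if $N(|u|,|w|)>0$ then $(|u|,|w|)\in\mathcal{N}$ and Corollary~\ref{corbest} together with the diamagnetic inequality $K(|u|,|w|)\le K(u,w)$ yields $\mathcal{P}(u,w)\le C_{\rm opt}K(|u|,|w|)^2\le C_{\rm opt}K(u,w)^2$. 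Writing $G(t):=K(u(t),w(t))$ — a non‑negative continuous function of $t$ since $(u,w)\in C(I;H^1\times H^1)$ — we arrive at $2E_0\ge G(t)-2C_{\rm opt}G(t)^2$, that is, $f(G(t))\ge 0$ on $I$ with $f(r)=a-r+br^q$, $a=2E_0$, $b=2C_{\rm opt}$, $q=2$.

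Next I would apply Lemma~\ref{lema comp1}. By \eqref{361} and Remark~\ref{obs32} one has $\mathcal{E}(P,Q)=N(P,Q)=\tfrac14 K(P,Q)$, hence $C_{\rm opt}=\frac{1}{4K(P,Q)}$ and $\gamma=(bq)^{-1/(q-1)}=(4C_{\rm opt})^{-1}=K(P,Q)$. The smallness hypothesis $a<(1-\tfrac1q)\gamma$ of Lemma~\ref{lema comp1} then reads $2E_0<\tfrac12 K(P,Q)$, which is exactly \eqref{41}, while $G(0)=K(u_0,w_0)>K(P,Q)=\gamma$ is exactly \eqref{42}. Thus Lemma~\ref{lema comp1}(ii) gives $K(u(t),w(t))>K(P,Q)$ for all $t\in I$. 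Finally, identity \eqref{411} yields
\[
\tau(u(t),w(t))=4E_0-K(u(t),w(t))-2\!\int\!\big(|u(t)|^2+\mu|w(t)|^2\big)dx\ \le\ 4E_0-K(u(t),w(t))\ <\ 4E_0-K(P,Q),
\]
so the lemma holds with $\delta:=K(P,Q)-4E_0$, whose positivity is again precisely condition \eqref{41}.

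The only genuine subtlety — beyond the bookkeeping that makes the hypotheses of Lemma~\ref{lema comp1} line up verbatim with \eqref{41}–\eqref{42} — is deducing the bound $\mathcal{P}(u,w)\le C_{\rm opt}K(u,w)^2$ for complex‑valued $(u,w)$ from the real‑variable inequality of Corollary~\ref{corbest}; this is exactly what the estimate $\mathrm{Re}(\bar u^3 w)\le|u|^3|w|$, the diamagnetic inequality, and the trivial case $N(|u|,|w|)=0$ are there to handle.
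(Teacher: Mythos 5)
Your proof is correct, and the first half (bounding $\mathcal{P}$ by Corollary~\ref{corbest} plus the diamagnetic inequality, then feeding $K(u,w)\le 2E_0+2C_{\rm opt}K(u,w)^2$ into Lemma~\ref{lema comp1} to obtain $K(u(t),w(t))>K(P,Q)$ for all $t\in I$) is essentially identical to the paper's, down to the choice of parameters $a=2E_0$, $b=2C_{\rm opt}$, $q=2$, $\gamma=(4C_{\rm opt})^{-1}=K(P,Q)$. Where you diverge is the final step. The paper, after obtaining $K(u(t),w(t))>K(P,Q)$, first notes that this plus \eqref{411} gives only the sign $\tau(u(t),w(t))<0$, and then runs a separate contradiction argument to upgrade this to $\tau(u(t),w(t))<-\theta K(u(t),w(t))$ for some $\theta>0$, finally setting $\delta=\theta K(P,Q)$. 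You instead observe directly that \eqref{411}, the non-negativity of the $L^2$ terms, and the strict bound $K(u(t),w(t))>K(P,Q)$ already give
\[
\tau(u(t),w(t))\le 4E_0-K(u(t),w(t))<4E_0-K(P,Q)=-\bigl(K(P,Q)-4E_0\bigr),
\]
which is exactly the desired conclusion with the explicit constant $\delta=K(P,Q)-4E_0>0$ (positivity being precisely \eqref{41} via $K(P,Q)=4\mathcal{E}(P,Q)$). Your route is more elementary — no contradiction argument, and $\delta$ is explicit — at the cost of not recording the proportional bound $\tau<-\theta K$ that the paper proves as an intermediate statement; since Lemma~\ref{lema delta} as stated only asks for $\tau\le-\delta$, and the subsequent proof of Theorem~\ref{blow4} uses only that, nothing is lost.
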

\begin{proof}
	Notice that from the definition of the energy \eqref{E} and \eqref{36} we have
	\begin{equation}\label{412}
		K(P,Q)=4\mathcal{E}(P,Q).
	\end{equation}
	Moreover, using \eqref{360} we get $|\mathcal{P}(u,w)|\leq N(|u|,|w|)\leq C_{\rm opt}K(|u|,|w|)^2\leq C_{\rm opt}K(u,w)^2. $
	Thus, by conservation of the energy
	\begin{equation}\label{413}
		\begin{split}
			K(u,w)&=2E(u_0,w_0)+2\mathcal{P}(u,w)-\int(|u|^2+\mu|w|^2)\\
			&\leq 2E(u_0,w_0)+2|\mathcal{P}(u,w)|\\
			&\leq 2E(u_0,w_0)+2C_{\rm opt}K(u,w)^2.\\
		\end{split}
	\end{equation}
	Therefore, taking $a=2E(u_0,w_0)$, $b=2C_{\rm opt}$ and $q=2$ in Lemma \ref{lema comp1}, we have $\gamma=(4C_{\rm opt})^{-1}$ and $f(r)=2E(u_0,w_0)-r+2C_{\rm opt}r^2$, for $r>0$. Also, setting $G(t)=K(u(t),w(t))$, it follows from \eqref{413} that
	$$
	f\circ G(t)=2E(u_0,w_0)-K(u(t),w(t))+2C_{\rm opt}K(u(t),w(t))^2\geq 0.
	$$  
	In addition, from \eqref{361},
	$$
	a<\left(1-\frac{1}{q}\right)\gamma \Leftrightarrow E(u_0,w_0)<\frac{1}{16C_{\rm opt}}=\mathcal{E}(P,Q),
	$$
	and, from \eqref{412},
	$$
	G(0)>\gamma \Leftrightarrow K(u_0,w_0)>\frac{1}{4C_{\rm opt}}=4\mathcal{E}(P,Q)=K(P,Q).
	$$
	Therefore, in view of \eqref{41} and \eqref{42} we may apply Lemma \ref{lema comp1} to get
	\begin{equation}\label{414}
		K(u(t),w(t))>K(P,Q),\quad\forall t\in I.
	\end{equation}
	The hypothesis \eqref{41} together with the conservation of the energy and \eqref{412} give
	$$
	4E(u(t),w(t))=4E(u_0,w_0)<4\mathcal{E}(P,Q)=K(P,Q)<K(u(t),w(t)),
	$$
	and as a consequence of \eqref{411},
	\begin{equation*}
		\tau(u(t),w(t))<0,\quad t\in I.
	\end{equation*}
	Now, let us show that there is $\theta>0$, such that
	\begin{equation}\label{416}
		\tau(u(t),w(t))<-\theta K(u(t),w(t)),\forall t\in I.
	\end{equation} Indeed, if $E(u_0,w_0)\leq 0$, then we can take $\theta=1$, and by \eqref{411} we have the desired estimate. On the other hand, suppose that $E(u_0,w_0)>0$ and \eqref{416} does not hold. Thus, there exist sequences $(t_m)\subset I$ and $\theta_m\rightarrow 0$ obeying
	$$
	-\theta_mK(u(t_m),w(t_m))\leq \tau(u(t_m),w(t_m))<0,
	$$
	which implies
	\begin{equation*}
		\begin{split}
			E(u(t_m),w(t_m))&=\frac{1}{4}\tau(u(t_m),w(t_m))+\frac{1}{4}K(u(t_m),w(t_m))+\frac{1}{2}\int(|u(t_m)|^2+\mu |w(t_m))|^2\\
			&\geq (1-\theta_m)\frac{1}{4}K(u(t_m),w(t_m)).\\
		\end{split}
	\end{equation*}
	Again, the conservation of the energy, \eqref{412} and \eqref{414} lead to
	\begin{equation*}
		\begin{split}
			E(u_0,w_0)=E(u(t_m),w(t_m))&\geq(1-\theta_m)\frac{1}{4}K(u(t_m),w(t_m))\\
			&>(1-\theta_m)\frac{1}{4}K(P,Q)\\
			&\geq (1-\theta_m)\mathcal{E}(P,Q).\\
		\end{split}
	\end{equation*}
	Taking $m\rightarrow\infty$ we arrive at a contradiction with \eqref{41}. Hence, the result follows from \eqref{414} and \eqref{416} with $\delta=\theta K(P,Q)$.
\end{proof}

In the proof of Theorem \ref{blow4} we will use \eqref{R'rs} with a suitable function $\phi$. Let us start by defining the compactly supported smooth function $\zeta:\R\to\R$ by
$$
\zeta(r)=
\begin{cases}
e^{-\frac{1}{(r-1)(3-r)}}, \quad 1< r< 3,\\
0, \qquad \textrm{otherwise}.
\end{cases}
$$
Next, for $r\geq0$, we define
$$
\chi(r)=
\begin{cases}
r^2, \qquad 0\leq r\leq 1\\
r^2-{\displaystyle \dfrac{1}{m_0}\int_{1}^{r^2}\int_1^t\zeta(s)dsdt}, \quad 1<r<3,\\
9-m_1, \qquad r\geq3,
\end{cases}
$$
with
$$
m_0=\int_1^9\zeta(s)ds \quad \mbox{and} \quad m_1=\dfrac{1}{m_0}\int_1^9\int_1^t\zeta(s)ds.
$$
It is not difficult to see that  $\chi''(r)\leq 2$ and $0\leq \chi'(r)\leq 2r$, for all $ r\geq 0$.

\begin{lemma}\label{lema chi}
	For $x\in\mathbb{R}^4$, we set $r=|x|$.  Given any $R>0$, we define
 $\chi_R(r)=R^2\chi(r/R)$. Then
	\begin{itemize}
		\item[(i)] If $r\leq R$,
		\begin{equation}\label{48}
			\Delta\chi_R(r)=8\quad\mbox{and}\quad\Delta^2\chi_R(r)=0.
		\end{equation}

		\item[(ii)] If $r\geq R$,
		\begin{equation}\label{49}
			\Delta\chi_R(r)\leq C\quad\mbox{and}\quad|\Delta^2\chi_R(r)|\leq\frac{C}{R^2},
		\end{equation}
		where $C$ is a constant independent of $R$.
	\end{itemize}
\end{lemma}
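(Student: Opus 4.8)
The plan is a direct scaling computation using the explicit form of $\chi$ together with the two elementary bounds $\chi''(r)\le 2$ and $0\le\chi'(r)\le 2r$ recorded just above the statement. First I would write down the radial expressions of the two operators in $\mathbb{R}^4$: for $f=f(r)$ one has $\Delta f=f''+\tfrac{3}{r}f'$ and, iterating, $\Delta^2 f=f''''+\tfrac{6}{r}f'''+\tfrac{3}{r^2}f''-\tfrac{3}{r^3}f'$. Next, from $\chi_R(r)=R^2\chi(r/R)$ the chain rule gives the four identities $\chi_R'(r)=R\,\chi'(r/R)$, $\chi_R''(r)=\chi''(r/R)$, $\chi_R'''(r)=R^{-1}\chi'''(r/R)$ and $\chi_R''''(r)=R^{-2}\chi''''(r/R)$; these are essentially the only computations involved.

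For part (i): when $r\le R$ we have $r/R\le 1$, so $\chi_R(r)=R^2(r/R)^2=r^2$, and substituting $f(r)=r^2$ into the formulas above gives $\Delta\chi_R(r)=2+\tfrac{3}{r}\cdot 2r=8$ and $\Delta^2\chi_R(r)=\Delta(8)=0$.

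For part (ii): the first estimate I would prove for every $r>0$ at once. From $\chi_R''(r)=\chi''(r/R)\le 2$ and $0\le\chi_R'(r)=R\,\chi'(r/R)\le R\cdot 2(r/R)=2r$ we get $\Delta\chi_R(r)=\chi_R''(r)+\tfrac{3}{r}\chi_R'(r)\le 2+6=8$, so $C=8$ works and is independent of $R$. For the bilaplacian bound, first note that if $r\ge 3R$ then $r/R\ge 3$, so $\chi(r/R)\equiv 9-m_1$ is constant and every derivative of $\chi_R$ of order $\ge 1$ vanishes, hence $\Delta^2\chi_R(r)=0$. On the transition annulus $R\le r\le 3R$ I would combine the four scaling identities with the inequality $r\ge R$: each of the four terms of $\Delta^2\chi_R(r)$ is bounded in absolute value by a fixed multiple of $R^{-2}$ times one of $|\chi''''(r/R)|,|\chi'''(r/R)|,|\chi''(r/R)|,|\chi'(r/R)|$ with $r/R\in[1,3]$ — for instance $\tfrac{6}{r}|\chi_R'''(r)|\le\tfrac{6}{R^2}|\chi'''(r/R)|$ and $\tfrac{3}{r^3}|\chi_R'(r)|\le\tfrac{3}{R^2}|\chi'(r/R)|$. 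Since $\chi\in C^\infty$, the constants $\sup_{[1,3]}|\chi^{(k)}|$ for $k=1,2,3,4$ are finite and independent of $R$, and summing the four estimates yields $|\Delta^2\chi_R(r)|\le C/R^2$.

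There is no genuine obstacle here: the argument is bookkeeping in powers of $R$. The one point that deserves a line of justification is that $\chi$ is of class $C^\infty$ (in particular $C^4$, so that $\Delta^2\chi_R$ is a classical function on the annulus): the three pieces defining $\chi$ match to infinite order at $r=1$ and $r=3$ precisely because $\zeta$ and all its derivatives vanish at the endpoints of its support, so the corrective term built by integrating $\zeta$ is flat there.
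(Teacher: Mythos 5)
Your proof is correct and follows essentially the same route as the paper: reduce to the radial formulas for $\Delta$ and $\Delta^2$ in $\mathbb{R}^4$, use the chain-rule scaling $\chi_R^{(k)}(r)=R^{2-k}\chi^{(k)}(r/R)$, and close the estimates with $1/r\le 1/R$ (or, as you do more cleanly for $\Delta\chi_R$, directly from $\chi''\le 2$ and $0\le\chi'\le 2r$, which yields the explicit constant $8$). A small bonus: your radial bilaplacian formula $\Delta^2 f=f''''+\tfrac{6}{r}f'''+\tfrac{3}{r^2}f''-\tfrac{3}{r^3}f'$ has the correct signs, whereas the paper's displayed version has the last two signs reversed (a harmless typo since only absolute values enter the estimate).
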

\begin{proof}
	
	$(i)$ Since $r\leq R$ then $\chi_R(r)=r^2$. Hence,
	$$
	\partial_{x_i}\chi_R(r)=\partial_{x_i}(|x|^2)=2x_i\quad\Rightarrow\quad \partial^2_{x_i}\chi_R(r)=2.
	$$
	Thus, $\Delta\chi_R(r)=8$ and $\Delta^2\chi_R(r)=0$.
	
	$(ii)$ A straightforward calculation leads to
	\begin{equation*}
		\frac{\partial^k\chi_R(r)}{\partial r^k}=\frac{\chi^{(k)}(r/R)}{R^{k-2}}.
	\end{equation*}
	So, for $k=0,1,...$ we have
	\begin{equation}\label{chi}
		\left|\frac{\partial^{k}\chi_R(r)}{\partial r^k}\right|\leq \frac{C}{R^{k-2}}.
	\end{equation}
	On the other hand,
	$$
	\partial_{x_i}\chi_R(r)=R^2\partial_{x_i}\chi(|x|/R)=R\frac{x_i}{|x|}\cdot \chi'(r/R)
	$$
	and
	$$
	\partial^2_{x_i}\chi_R(r)=R\left[\frac{|x|^2-x_i^2}{|x|^3}\cdot\chi'(r/R)+\frac{1}{R}\frac{x_i^2}{|x|^2}\cdot\chi''(r/R)\right].
	$$
	Therefore, 
	$$
	\Delta\chi_R(r)=\frac{3}{r}\frac{\partial\chi_R(r)}{\partial r}+\frac{\partial^2\chi_R(r)}{\partial r^2}.
	$$
	A new calculation yields
	\begin{equation*}
		\Delta^2\chi_R(r)=\frac{\partial^4\chi_R(r)}{\partial r^4}+\frac{6}{r}\frac{\partial^3\chi_R(r)}{\partial r^3}
		-\frac{3}{r^2}\frac{\partial^2\chi_R(r)}{\partial r^2}+\frac{3}{r^3}\frac{\partial\chi_R(r)}{\partial r}.
	\end{equation*}
	Hence, using \eqref{chi} and the fact that $1/r\leq 1/R$, we obtain
	$$
	\Delta \chi_R(r)\leq C\quad \hbox{and}\quad|\Delta ^2\chi_R(r)|\leq\frac{C}{R^2}.
	$$
	The proof of the lemma is thus completed.
\end{proof}

Now, we are in a position to prove Theorem \ref{blow4}.

\begin{proof}[Proof of Theorem \ref{blow4}]
Assume the maximal interval of existence is of the form	 $I=(T_*,T^*)$ and let us prove that both $T^*$ and $T_*$ are finite. Actually, we will focus in the case $T^*<\infty$, because the  argument for $T_*$ follows similarly. Suppose by contradiction that $T^*=\infty$. Taking $\phi(x)=\chi_R(|x|)$, defined as in Lemma \ref{lema chi},  we obtain, from \eqref{R} and \eqref{R'rs},
	$$
	\mathcal{R}(t)=2\hbox{Im}\int\nabla\chi_R\cdot(\nabla u \bar{u}+\sigma\nabla w\bar{w})dx
	$$
	and
	\begin{equation*}
		\begin{split}
			\mathcal{R}'(t)&=8\tau(u,w)+4\int(\chi_R''-2)(|\nabla u|^2+|\nabla w|^2)dx-\int\Delta^2\chi_R(|u|^2+|w|^2)dx\\
			&\quad-\hbox{Re}\int(\Delta\chi_R-8)H(u,w)dx\\
			&=:8\tau(u,w)+\mathcal{R}_1(t)+\mathcal{R}_2(t)+\mathcal{R}_3(t).
		\end{split}
	\end{equation*}
	
	Since by construction $\chi_R''\leq 2$, for any $r\geq 0$, we must have $\mathcal{R}_1\leq 0$. Now, using conservation of the mass \eqref{M} and \eqref{48}-\eqref{49}, we get
	$$
	\mathcal{R}_2(t)\leq\int|\Delta^2\chi_R|(|u|^2+|w|^2)dx\leq CR^{-2}\int_{\{|x|\geq R\}}(|u|^2+|w|^2)dx\leq CR^{-2}M(u_0,w_0).
	$$
	Also, from  \eqref{48},
	\begin{equation*}
		\begin{split}
			\mathcal{R}_3&=-\hbox{Re}\int_{\{|x|\geq R\}}(\Delta\chi_R-8)H(u,w)dx\\
			&\leq C\int_{\{|x|\geq R\}}|\mathrm{Re} H(u,w)|dx\\
			&\leq C\int_{\{|x|\geq R\}}(|u|^4+|w|^4)dx\\
			&=C(\Vert u\Vert_{L^4(|x|\geq R)}^4+\Vert w\Vert_{L^4(|x|\geq R)}^4).
		\end{split}
	\end{equation*}
Next we recall (see, for instance, \cite[page 323]{ogawa}), that for $f\in H^1(\mathbb{R}^4)$ radially symmetric, it holds the radial Gagliardo-Nirenberg inequality
	$$
	\int_{\{|x|\geq R\}}|f|^4\leq CR^{-3}\Vert f\Vert^3_{L^2(|x|\geq R)}\Vert\nabla f\Vert_{L^2(|x|\geq R)}^{1/2}.
	$$
	Then, by Young's inequality, for $\epsilon>0$, we obtain
	\begin{equation*}
		\begin{split}
			\mathcal{R}_3&\leq CR^{-3}(\Vert u\Vert^3_{L^2(|x|\geq R)}\Vert\nabla u\Vert_{L^2(|x|\geq R)}^{1/2}+\Vert w\Vert^3_{L^2(|x|\geq R)}\Vert\nabla w\Vert_{L^2(|x|\geq R)}^{1/2})\\
			&\leq C_\epsilon R^{-4}(\Vert u\Vert^4_{L^2(|x|\geq R)}+\Vert w\Vert^4_{L^2(|x|\geq R)})+\epsilon K(u,w)\\
			&\leq C_\epsilon R^{-4}M(u_0,w_0)^4+\epsilon K(u,w),\\
		\end{split}
	\end{equation*}
	where $C_\epsilon$ is a constant depending on $\epsilon$. Since, from \eqref{411},
	\begin{equation*}
		\epsilon K(u,w)\leq -\epsilon\tau(u,w)+4\epsilon E(u_0,w_0),
	\end{equation*}
	collecting all above estimates, we obtain
	\begin{equation}\label{417}
		\mathcal{R}'(t)\leq (8-\epsilon)\tau(u,w)+CR^{-2}M(u_0,w_0)+C_\epsilon R^{-4}M(u_0,w_0)^2+4\epsilon E(u_0,w_0), \quad \epsilon>0.
	\end{equation}
	Therefore, for $\epsilon\in(0,1)$, Lemma \ref{lema delta} yields
	\begin{equation*}
		\mathcal{R}'(t)\leq -(8-\epsilon)\delta+CR^{-2}M(u_0,w_0)+C_\epsilon R^{-4}M(u_0,w_0)^2+4\epsilon E(u_0,w_0).
	\end{equation*}
	Hence, fixing $R$ as large as necessary and $\epsilon$ as small as necessary, we get  $\mathcal{R}'(t)\leq -2\delta$. Integrating in $[0,t)$, we obtain
	\begin{equation}\label{418}
		\mathcal{R}(t)\leq -2\delta t+\mathcal{R}(0).
	\end{equation}
	On the other hand, by Hölder's inequality,
	\begin{equation}\label{419}
		\begin{split}
			|\mathcal{R}(t)| &\leq 2R\int |\chi'(|x|/R)|(|\nabla u||u|+\sigma|\nabla w||w|)dx\\
			&\leq CR(\Vert u\Vert_{L^2}\Vert \nabla u\Vert_{L^2}+\Vert w\Vert_{L^2}\Vert \nabla w\Vert_{L^2})\\
			&\leq CR M(u_0,w_0)^{1/2}K(u,w)^{1/2}.\\
		\end{split}
	\end{equation}
	Taking $T_0$ sufficiently large such that $\mathcal{R}(0)/\delta <T_0$, by \eqref{418}, we get
	\begin{equation}\label{420}
		\mathcal{R}(t)\leq-\delta t<0,\quad t\geq T_0.
	\end{equation}
	Consequently, \eqref{419} and \eqref{420} imply
	$$\delta t\leq-\mathcal{R}(t)=|\mathcal{R}(t)|\leq CRM(u_0,w_0)^{1/2}K(u,w)^{1/2},$$ that is, for some positive constant $C_0$, 
	\begin{equation}\label{421}
		K(u(t),w(t))\geq C_0t^2,\quad t\geq T_0.
	\end{equation}
	Now, since $\epsilon$ can be chosen arbitrarily small, from \eqref{417} and \eqref{411}, we deduce that
	\begin{equation}\label{422}
		\mathcal{R}'(t)\leq 32E(u_0,w_0)-8K(u,w)+CR^{-2}M(u_0,w_0)+CR^{-4}M(u_0,w_0)^2,
	\end{equation}
	where we have used the energy conservation once again. Now, we may choose $T_1>T_0$, so that 
	$$
	C_04T_1^2\geq 32E(u_0,w_0)+CR^{-2}M(u_0,w_0)+CR^{-4}M(u_0,w_0)^2.
	$$
	Then, from \eqref{421} and \eqref{422} we arrive at
	$$
	\mathcal{R}'(t)\leq -4K(u(t),w(t)), \quad t>T_1.
	$$
	Hence, integrating in $[T_1,t)$, 
	$$
	\mathcal{R}(t)\leq -4\int_{T_1}^tK(u(s),w(s))ds,
	$$
	and combining with \eqref{419}, leads to
	\begin{equation}\label{423}
		4\int_{T_1}^tK(u(s),w(s))ds\leq -\mathcal{R}(t)\leq |\mathcal{R}(t)|\leq CRM(u_0,w_0)^{1/2}K(u(t),w(t))^{1/2}.
	\end{equation}
	Setting $\eta(t):=\int_{T_1}^tK(u(s),w(s))ds$ and $A:=\frac{16}{C^2R^2M(u_0,w_0)} $, we may write
	$$
	A\leq\frac{\eta'(t)}{\eta^2(t)} ,
	$$
	taking $T'>T_1$ and integrating over $[T',t)$, we get
	$$
	A(t-T')\leq \int_{T'}^t\frac{\eta'(s)}{\eta^2(s)}ds=\frac{1}{\eta(T')}-\frac{1}{\eta(t)}\leq \frac{1}{\eta(T')},
	$$
	that is, 
	$$
	0<\eta(T')\leq \frac{1}{A(t-T')}.
	$$
	Hence, taking the limit as $t\rightarrow\infty$ we derive a contradiction. Therefore, $T^*<\infty$ and the proof is completed.
\end{proof}

\section{Appendix: Blow-up in the case of resonance}\label{appendix}

Throughout the paper until now, we have assumed the parameters $\sigma$ and $\mu$ are arbitrary positive real number. This appendix is dedicated to study the existence of blowing-up solutions in the case $\sigma=3$ and $\mu=9$; this is known as the resonance case and comes from the resonant interaction between the beam of a certain fundamental frequency and its third harmonic. Of course, Theorem \ref{blow4} is valid in this case, but here we will establish the blow-up without the assumption of radial symmetry of the initial data. 

In this case, it is easy to see, if $(u,w)$ is a solution of \eqref{SIST1} then $(\widetilde{u}, \widetilde{w})$ defined by 
$$
\widetilde{u}=e^{it}u, \qquad \widetilde{w}=e^{3it}w
$$
is a solution of
\begin{equation}\label{SIST11}
	\begin{cases}
		iu_t+\Delta u+\left(\dfrac{1}{9}|u|^2+2|w|^2\right)u+\dfrac{1}{3}\bar{u}^2w=0,\\
		i3 w_t+\Delta w+(9|w|^2+2|u|^2)w+\dfrac{1}{9}u^3=0,\\
	\end{cases}
\end{equation}
where we have dropped the tilde in order to simplify notation. So, from now on, we will study system \eqref{SIST11}.

First of all, note that the linear term $u$ and $\mu w$ is absent in system \eqref{SIST11} when compared with \eqref{SIST1}. Hence, the energy now is given by
\begin{equation}\label{Energia1}
	\begin{split}
		E(u,w)&:=\frac{1}{2}\int (|\nabla u|^2 +|\nabla w|^2) -\int\left(\frac{1}{36}|u|^4+\frac{9}{4}|w|^4+|u|^2|w|^2+\frac{1}{9}\hbox{Re}(\bar{u}^3w)\right)\\
		&=\frac{1}{2}K(u,v)-P(u,v).
	\end{split}
\end{equation}
In particular, the energy is defined even for functions that do not belong to $L^2(\R^4)$, which is not the case for the energy associated with system \eqref{SIST1}. This forced us to consider system \eqref{SIST1} in $H^1(\R^4)$ instead of $\dot{H}^1(\R^4)$. Recall also we have used the conservation of the mass in the proof of Theorem \ref{blow4}.

So, our goal here is to obtain the existence of blowing-up solutions in $\dot{H}^1(\R^4)$. This seems to be more natural (in the case of resonance) because now the energy of the ground states (see \eqref{E}) coincides with that one in \eqref{Energia1}, that is,
\begin{equation}\label{ee}
	E(P,Q)=\mathcal{E}(P,Q),
\end{equation}
for any ground state $(P,Q)\in \mathcal{G}$. Note also that system \eqref{SIST3} appears when we are looking for stationary solutions of \eqref{SIST11}.

Let us start by recalling a well-posedness result for \eqref{SIST11}. For $T>0$ and $I=[-T,T]$, let
\begin{equation*}
	Z(I):=C(I; \dot{H}^1(\R^4))\cap L^6(I;\dot{H}^{1,12/5}(\R^4)).
\end{equation*}
 The result is the following.
\begin{theorem}\label{BCL1}
	For any $u_0,w_0\in \dot{H}^1(\mathbb{R}^4)$, there exists $T(u_0,w_0)>0$, such that system \eqref{SIST11} has a unique solution $(u,w)\in Z(I)\times Z(I)$, with $I=[-T(u_0,w_0),T(u_0,w_0)]$.  In addition, the map data-solution is continuous and the following blow-up alternative holds: There exist times $T_*,T^*\in(0,\infty]$ such that the solution can be extended to $(-T_*,T^*)$ and if $T^*<\infty$, then
	\begin{equation*}
		\Vert \nabla u(t)\Vert_{L_t^6([0,T^*];L_x^{12/5})}+\Vert\nabla w(t)\Vert_{L_t^6([0,T^*];L_x^{12/5})}=\infty.
	\end{equation*}
	A similar result holds if $T_*<\infty.$
\end{theorem}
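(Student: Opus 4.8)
The plan is to establish Theorem \ref{BCL1} by a fixed-point argument for the Duhamel formulation of \eqref{SIST11}, adapting to the present system the Cazenave--Weissler scheme for the energy-critical NLS (see \cite{CAZW,KM}). Writing $U(t)=e^{it\Delta}$ and $W(t)=e^{it\Delta/3}$ for the unitary groups generated by the linear part of \eqref{SIST11}, I would first observe that a pair $(u,w)\in Z(I)\times Z(I)$ solves \eqref{SIST11} on $I=[-T,T]$ if and only if it solves the associated integral equations, exactly as displayed in the Introduction for \eqref{SIST1} but with $U$, $W$ as above and the nonlinear terms of \eqref{SIST11} in place of $F$, $G$. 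The tool driving everything is the family of Strichartz estimates in $\R^4$ for $U$ and $W$: since the admissibility condition $\tfrac2q=2-\tfrac4r$ is scale invariant, it holds simultaneously for the pairs $(\infty,2)$, $(2,4)$ and $(6,12/5)$, and the group speed $1/3$ is immaterial. The exponent $12/5$ is forced by the Sobolev embedding $\dot H^{1,12/5}(\R^4)\hookrightarrow L^6(\R^4)$, which upgrades the $Z(I)$-norm to control of $u$ and $w$ in $L^6(I;L^6(\R^4))$, the natural space for a cubic nonlinearity in dimension four.

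Next I would prove the analytic heart of the argument, a set of trilinear estimates for the differentiated nonlinearities. Using $\nabla(|u|^2u)=2\,\mathrm{Re}(\bar u\nabla u)u+|u|^2\nabla u$, $\nabla(\bar u^2w)=2\bar u\,\overline{\nabla u}\,w+\bar u^2\nabla w$, $\nabla(u^3)=3u^2\nabla u$ and the analogous identities for the remaining terms, one bounds $|\nabla F(u,w)|$ and $|\nabla G(u,w)|$ pointwise by a finite sum of expressions $|v_1||v_2||\nabla v_3|$ with $v_j\in\{u,w\}$. Placing the undifferentiated factors in $L^6_tL^6_x$ and the differentiated one in $L^6_tL^{12/5}_x$, Hölder in space and time puts $\nabla F(u,w),\nabla G(u,w)$ in $L^2(I;L^{4/3}(\R^4))$ with norm bounded by the cube of $\|u\|_{L^6(I;\dot H^{1,12/5})}+\|w\|_{L^6(I;\dot H^{1,12/5})}$, together with the corresponding Lipschitz-type estimate for the difference of two nonlinear terms. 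Since $L^2_tL^{4/3}_x$ is the dual Strichartz space of $L^2_tL^4_x$, the inhomogeneous Strichartz inequality then controls the Duhamel terms in $C(I;\dot H^1)\cap L^6(I;\dot H^{1,12/5})=Z(I)$.

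Because the problem is energy-critical, no positive power of $|I|$ is available from these estimates, so I cannot close the contraction by shrinking $T$; instead I would exploit smallness of the linear flow. By Strichartz, $U(\cdot)u_0$ and $W(\cdot)w_0$ lie in $L^6(\R;\dot H^{1,12/5}(\R^4))$, so by absolute continuity of the integral there is $T=T(u_0,w_0)>0$ with $\|U(t)u_0\|_{L^6([-T,T];\dot H^{1,12/5})}+\|W(t)w_0\|_{L^6([-T,T];\dot H^{1,12/5})}\le\varepsilon$. For $\varepsilon$ small, the trilinear estimates make the integral map invariant on, and a contraction on, the ball of radius $2\varepsilon$ of $Z(I)\times Z(I)$ in the $L^6_t\dot H^{1,12/5}_x$ metric; the $L^\infty_t\dot H^1$ component comes out finite automatically by Strichartz. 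Banach's fixed-point theorem yields the unique solution in that ball; uniqueness in all of $Z(I)\times Z(I)$ follows by applying the trilinear estimates on short subintervals (a continuation argument), continuous dependence of the data-to-solution map follows from the same difference estimates, and continuity of $t\mapsto(u(t),w(t))$ into $\dot H^1\times\dot H^1$ is again a consequence of the Strichartz bounds. For the blow-up alternative I would extend the solution to its maximal interval $(-T_*,T^*)$ and argue by contradiction: if $T^*<\infty$ while $\|\nabla u\|_{L^6L^{12/5}([0,T^*))}+\|\nabla w\|_{L^6L^{12/5}([0,T^*))}<\infty$, split $[0,T^*)$ into finitely many subintervals on which these norms are $\le\varepsilon$ and re-run the local construction to get a uniform existence time beyond any $t$ near $T^*$, contradicting maximality; the case $T_*<\infty$ is identical.

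The hard part, as in every energy-critical local well-posedness result, is exactly the absence of an $|I|$-gain: the differentiated nonlinear estimate is scale invariant, so the contraction has to be closed through smallness of the free evolution in the critical Strichartz norm rather than through a short time interval. Everything else --- the pointwise differentiation of the cubic terms (equivalently, a fractional Leibniz rule), the bookkeeping of the mixed terms $\bar u^2w$ and $u^3$ coupling the two components, and the Sobolev/Hölder manipulations --- is routine and follows the scalar case \cite{CAZW,KM,D} essentially verbatim, the two distinct group speeds causing no complication.
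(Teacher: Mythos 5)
Your proposal is correct and follows exactly the approach the paper relies on: the paper simply cites Cazenave--Weissler \cite{CAZW} and Kenig--Merle \cite{KM} for the standard energy-critical fixed-point argument via Strichartz estimates, together with the Sobolev embedding $\dot H^{1,12/5}(\R^4)\hookrightarrow L^6(\R^4)$, and your sketch spells out precisely that argument, including the correct admissible pairs, trilinear estimates landing in $L^2_tL^{4/3}_x$, the use of smallness of the free evolution rather than smallness of $T$, and the blow-up alternative by subinterval decomposition.
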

\begin{proof}
The proof follows as in the case of the scalar Schr\"odinger equation. For the existence and continuous dependence we refer the reader to \cite{CAZW}  (see also \cite{KM}, where the authors gave an alternative proof). The blow-up alternative then follows as in \cite[Lemma 2.11]{KM} combined with the Sobolev embedding.
\end{proof}

Our blow-up result reads as follows.

\begin{theorem}\label{blow41}
	Suppose $(u_0,w_0)\in \dot{H}^1(\mathbb{R}^4)\times \dot{H}^1(\mathbb{R}^4)$ and let $(u,w)$ be the corresponding solution of \eqref{SIST11} defined in the maximal time interval of existence, say, $J$. Assume in addition that
	$$
	(u_0,w_0)\in L^2(\R^4,|x|^2dx)\times L^2(\R^4,|x|^2dx).
	$$
	If
	\begin{equation}\label{d1}
		E(u_0,w_0)<{E}(P,Q)
	\end{equation}
and
	\begin{equation}\label{d2}
		K(u_0,w_0)>K(P,Q),
	\end{equation}
	where $(P,Q)$ is any ground state in $\mathcal{G}$, then the time interval $J$ is finite.
\end{theorem}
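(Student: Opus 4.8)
\emph{Strategy.} The point of working in the resonance regime $\sigma=3$ is that the ``bad'' second term in the virial identity \eqref{VV} vanishes identically, so the classical convexity (Glassey) method applies \emph{without} any symmetry assumption on the data. Take the weight $\phi(x)=|x|^2$ and set
$$
\mathcal{V}(t)=\int |x|^2\big(|u(t,x)|^2+9|w(t,x)|^2\big)\,dx .
$$
By the finite-variance hypothesis $(u_0,w_0)\in L^2(\R^4,|x|^2dx)\times L^2(\R^4,|x|^2dx)$ this quantity is finite at $t=0$, and it remains finite and twice differentiable on the maximal interval $J$ by the standard propagation-of-variance argument (regularize the data and/or replace $|x|^2$ by the truncated weight $\chi_R(|x|)$ of Lemma \ref{lema chi} and let $R\to\infty$). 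Differentiating $\mathcal{V}$ and using \eqref{SIST11} exactly as in the computation leading to \eqref{VV}, the nonlinear contribution to $\mathcal{V}'(t)$ is a multiple of $\big(\tfrac13-\tfrac{\sigma}{9}\big)\,\mathrm{Im}(\bar u^3 w)$, which is zero precisely for $\sigma=3$. Hence
$$
\mathcal{V}'(t)=4\,\mathrm{Im}\int x\cdot\big(\bar u\,\nabla u+3\,\bar w\,\nabla w\big)\,dx=\mathcal{R}(t),
$$
with $\mathcal{R}$ as in \eqref{R} (for $\phi=|x|^2$).

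\emph{The virial inequality.} Differentiating once more via the general formula for $\mathcal{R}'$ recorded just before \eqref{R'rs}, now with $\phi=|x|^2$ so that $\partial^2_{x_m x_j}\phi=2\delta_{mj}$, $\Delta\phi=8$ and $\Delta^2\phi=0$, and using $\mathrm{Re}\int H(u,w)\,dx=4\mathcal{P}(u,w)$, we get
$$
\mathcal{V}''(t)=8K(u,w)-32\mathcal{P}(u,w)=8\,\tau(u,w),\qquad \tau(u,w)=K(u,w)-4\mathcal{P}(u,w).
$$
Since for \eqref{SIST11} the energy is $E=\tfrac12K-\mathcal{P}$ (no $L^2$ terms, see \eqref{Energia1}), we have $\tau(u,w)=4E(u,w)-K(u,w)$, so by conservation of energy $\mathcal{V}''(t)=32E(u_0,w_0)-8K(u(t),w(t))$. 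Next one checks that the analogue of Lemma \ref{lema delta} holds for \eqref{SIST11}: from $|\mathcal{P}(u,w)|\le C_{\rm opt}K(u,w)^2$ and $K=2E+2\mathcal{P}$ one obtains $K\le 2E(u_0,w_0)+2C_{\rm opt}K^2$, so Lemma \ref{lema comp1} with $a=2E(u_0,w_0)$, $b=2C_{\rm opt}$, $q=2$ applies, for which $\gamma=(4C_{\rm opt})^{-1}=4\mathcal{E}(P,Q)=K(P,Q)$ by \eqref{361} and \eqref{412}, the constraint $a<\tfrac12\gamma$ being exactly $E(u_0,w_0)<\mathcal{E}(P,Q)=E(P,Q)$ by \eqref{ee}, i.e. hypothesis \eqref{d1}. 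Combined with \eqref{d2} this yields $K(u(t),w(t))>K(P,Q)$ for all $t\in J$; then $4E(u(t),w(t))=4E(u_0,w_0)<4\mathcal{E}(P,Q)=K(P,Q)<K(u(t),w(t))$ forces $\tau(u(t),w(t))<0$, and the same refinement as in the proof of Lemma \ref{lema delta} upgrades this to $\tau(u(t),w(t))\le-\delta<0$ uniformly on $J$.

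\emph{Conclusion.} It follows that $\mathcal{V}''(t)\le-8\delta<0$ throughout $J$. If $T^*=\infty$, integrating twice gives $\mathcal{V}(t)\le \mathcal{V}(0)+\mathcal{V}'(0)\,t-4\delta t^2\to-\infty$, contradicting $\mathcal{V}(t)\ge0$; hence $T^*<\infty$, and the argument for $T_*$ is identical, so $J$ is finite. As usual for variance-based blow-up proofs, the only genuinely technical point is the rigorous justification that $\mathcal{V}$ is finite and $C^2$ on $J$ with the stated virial identities valid for merely $\dot H^1$ solutions; this is handled by the standard regularization/truncation procedure and is precisely where the finite-variance assumption enters in an essential way.
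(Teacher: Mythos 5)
Your proposal is correct and follows essentially the same approach as the paper: the unweighted virial function $\mathcal{V}(t)=\int|x|^2(|u|^2+9|w|^2)\,dx$ with the identity $\mathcal{V}''(t)=32E(u_0,w_0)-8K(u(t),w(t))$ (the paper records this as Proposition \ref{virialprop}, citing \cite{oliveira} for the computation, rather than deriving it inline), combined with a uniform strict lower bound $K(u(t),w(t))>K(P,Q)$ upgraded to a quantitative gap, yielding $\mathcal{V}''\le -c<0$ and the usual contradiction with $\mathcal{V}\ge0$. The only cosmetic difference is in how the uniform gap is obtained: the paper goes directly through the continuity Lemma \ref{lema comp11} to get $K(u(t),w(t))>(1+\delta)K(P,Q)$, whereas you apply Lemma \ref{lema comp1} and then rerun the sequential-contradiction refinement from the proof of Lemma \ref{lema delta} to get $\tau\le-\delta$; both are valid and equivalent here (indeed easier in the resonance case since the $L^2$ terms are absent).
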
 

The proof of Theorem \ref{blow41}  follows from the convexity method with the help of the following virial identity. 

\begin{proposition}\label{virialprop}
	Assume
$$
u_0,w_0\in \dot{H}^1(\mathbb{R}^4)\cap L^2(\R^4,|x|^2dx).
$$
	and define
	$$
	V(t)=\int |x|^2(|u(t)|^2+9|w(t)|^2)dx,
	$$
	where $(u(t),w(t))$ is the maximal solution of \eqref{SIST11}, with initial data $(u_0,w_0)$, and defined in the maximal time interval J. Then $V\in C^2\left(J\right)$ and
	\begin{equation}\label{vlinha}
		V'(t)=4Im \int \left(\overline{u}(t)x\cdot\nabla u(t)+3 \overline{w}(t)x\cdot\nabla w(t)\right)dx
	\end{equation}
	and 
	\begin{equation}\label{vdoislinha}
		\begin{split}
			V''(t)=32 E(u(t),w(t))-8K(u(t),w(t)).
		\end{split}
	\end{equation}
\end{proposition}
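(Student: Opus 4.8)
The plan is to establish \eqref{vlinha} and \eqref{vdoislinha} by the classical virial computation, justifying the differentiation under the integral sign via a density argument. First I would recall that the local theory (Theorem \ref{BCL1}) together with the finite-speed-of-propagation-free structure of the Schr\"odinger flow gives, for initial data in $\dot H^1\cap L^2(|x|^2dx)$, that the weight $|x|$ is propagated: $|x|u(t), |x|w(t)\in L^2(\R^4)$ for $t$ in the maximal interval $J$, and the map $t\mapsto \||x|u(t)\|_{L^2}^2+9\||x|w(t)\|_{L^2}^2$ is continuous. The standard way to see this is to work first with smooth, fast-decaying data (or to insert a cutoff weight $|x|^2 e^{-\epsilon|x|^2}$, compute, and let $\epsilon\to 0$) so that all integrations by parts below are legitimate, and then pass to the general case by the continuity of the data-to-solution map and the conservation laws. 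This is the step I expect to require the most care.

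Next I would carry out the formal computation. Differentiating $V$ and using the first equation $iu_t=-\Delta u - f(u,w)$ and the second $3iw_t=-\Delta w - g(u,w)$ (with $f,g$ as in the introduction), one gets
\begin{equation*}
V'(t)=\int |x|^2\,\partial_t\bigl(|u|^2+9|w|^2\bigr)dx
=2\,\mathrm{Im}\int |x|^2\bigl(\bar u\,\Delta u+\bar w\,\Delta w\bigr)dx,
\end{equation*}
because the nonlinear contributions $\mathrm{Im}(\bar u f(u,w))$ and $3\,\mathrm{Im}(\bar w\cdot\frac13 g(u,w))$ are arranged so that the gauge terms cancel exactly in the resonant system \eqref{SIST11} (this is precisely the point of introducing the factor $9$ in the weight and the factor $3$ in front of $w_t$). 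Integrating by parts, $2\,\mathrm{Im}\int |x|^2\bar u\,\Delta u = -4\,\mathrm{Im}\int \bar u\, x\cdot\nabla u$, and similarly for $w$ with the extra factor $3$ coming from the weight, which yields \eqref{vlinha}.

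Then I would differentiate \eqref{vlinha} once more. Writing $V'(t)=4\,\mathrm{Im}\int(\bar u\,x\cdot\nabla u+3\bar w\,x\cdot\nabla w)$ and using the equations again, the linear part produces the Morawetz/virial quadratic form: for a Schr\"odinger group $e^{it\Delta}$ one has $\frac{d}{dt}\,4\,\mathrm{Im}\int\bar u\,x\cdot\nabla u = 8\int|\nabla u|^2$, and the analogous identity for $w$ (the factor $3$ from the weight and the factor $3$ in $3iw_t$ combine to give again a clean $8\int|\nabla w|^2$). The nonlinear part contributes, after integration by parts in $x$, a multiple of the quartic terms; collecting everything and using the homogeneity of the nonlinearity (degree $4$), one arrives at an expression of the form $V''(t)=8K(u,w)-c\,\mathcal P(u,w)$ for the appropriate constant $c$, which one rewrites using $E=\frac12 K-\mathcal P$ as in \eqref{Energia1} to obtain exactly $V''(t)=32E(u,w)-8K(u,w)$. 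The $C^2(J)$ regularity follows since the right-hand side is continuous in $t$ on $J$ by the local theory and conservation of energy. Finally I would remark that \eqref{vdoislinha} is nothing but $V''=4\tau(u,w)$ for the Pohozaev functional $\tau$ of Section \ref{blowup} specialized to the resonant energy, which is the form used to run the convexity argument in the proof of Theorem \ref{blow41}.
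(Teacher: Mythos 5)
Your approach coincides with the paper's: the paper's proof of this proposition is simply a citation of \cite[Proposition 4.4]{oliveira}, and your direct virial computation (differentiate under the integral after a cutoff/approximation argument, observe the cancellation of the $\mathrm{Im}(\bar u^3 w)$ terms forced by the $9|x|^2$ weight together with the factor $3$ in $3iw_t$, and integrate by parts) is exactly what that cited result carries out, with the constant bookkeeping for \eqref{vlinha} and \eqref{vdoislinha} checking out, including your observation that both components contribute $8\int|\nabla\cdot|^2$ after the $3$'s cancel. One minor slip in your closing remark: with $\tau=K-4\mathcal{P}$ and the appendix energy $E=\tfrac12K-\mathcal{P}$ one has $\tau=4E-K$, hence $V''=32E-8K=8\tau$, not $4\tau$; this does not affect the argument.
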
 
\begin{proof}
For the proof we refer the reader to \cite[Proposition 4.4]{oliveira}. We emphasize that the energy $E$ here is as defined in \eqref{Energia1}.
\end{proof}

Note that in the proof of Lemma \ref{lema delta},  we have proved that under conditions \eqref{41} and \eqref{42} (which in view of \eqref{ee} are the same assumptions of Theorem \ref{blow41}),
$$
	K(u(t),w(t))>K(P,Q),
$$
for all $t$ as long as the solution exists. Actually, by a continuity argument we can ``improve'' such a inequality as we will see below. To do so, we use the following (continuity) lemma.

\begin{lemma}\label{lema comp11}
	Let $I\subset\mathbb{R}$ be an open interval with $0\in I$, $a\in\R$, $b>0$ and $q>1$. Define $\gamma=(bq)^{-1/(q-1)}$ and $f(r)=a-r+br^q$, for $r>0$. Let $G(t)$ be a non-negative continuous function such that $f\circ G\geq 0$ in $I$. Assume there exists a small $\varepsilon>0$ such that $$a<(1-\varepsilon)\displaystyle\left(1-\displaystyle\frac{1}{q}\displaystyle\right)\gamma \qquad \mbox{and} \qquad G(0)>\gamma.$$
	 Then, there exists $\delta>0$ such that $G(t)>(1+\delta)\gamma$, for all $t\in I$.

\end{lemma}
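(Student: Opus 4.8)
The plan is to argue entirely by continuity, exactly as in the proof of Lemma \ref{lema comp1}, but keeping track of the slack that the strict inequality $a<(1-\varepsilon)(1-\tfrac1q)\gamma$ provides. First I would record the elementary facts about $f(r)=a-r+br^q$: its derivative $f'(r)=-1+bqr^{q-1}$ vanishes only at $r=\gamma=(bq)^{-1/(q-1)}$, it is negative on $(0,\gamma)$ and positive on $(\gamma,\infty)$, so $f$ attains its global minimum on $(0,\infty)$ at $r=\gamma$, with minimum value $f(\gamma)=a-\gamma+b\gamma^q=a-(1-\tfrac1q)\gamma$. Under the hypothesis $a<(1-\varepsilon)(1-\tfrac1q)\gamma$ we therefore have $f(\gamma)<-\varepsilon(1-\tfrac1q)\gamma<0$. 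Since $f$ is continuous, $f(0)=a$, and $f(r)\to+\infty$ as $r\to\infty$, the set $\{r>0:f(r)\geq0\}$ is the union of two disjoint closed intervals; call $r_1<\gamma<r_2$ the two roots of $f$ adjacent to $\gamma$, so that $f<0$ on $(r_1,r_2)$ and $f\geq0$ on $(0,r_1]\cup[r_2,\infty)$ (within $(0,\infty)$).

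The point is that the strict gap $f(\gamma)<-\varepsilon(1-\tfrac1q)\gamma$ forces $r_2$ to be strictly larger than $\gamma$ by a quantitative amount; concretely, I would choose $\delta>0$ small enough that $(1+\delta)\gamma<r_2$, which is possible precisely because $f((1+s)\gamma)<0$ for all small $s>0$ (as $(1+s)\gamma\in(r_1,r_2)$ once $s$ is small). With this $\delta$ fixed, note in particular $f((1+\delta)\gamma)<0$. Now let $G$ be any non-negative continuous function on the interval $I$ with $f\circ G\geq0$ on $I$ and $G(0)>\gamma$. Since $G(0)>\gamma$ and $f(G(0))\geq0$, and $f<0$ on $(r_1,r_2)\ni\gamma$, we must in fact have $G(0)\geq r_2>(1+\delta)\gamma$. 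Suppose, for contradiction, that $G(t_\ast)\leq(1+\delta)\gamma$ for some $t_\ast\in I$. Then $G(0)>(1+\delta)\gamma\geq G(t_\ast)$, so by the intermediate value theorem applied to the continuous function $G$ on the interval between $0$ and $t_\ast$, there is a point $t_1$ in that interval with $G(t_1)=(1+\delta)\gamma$. But then $f(G(t_1))=f((1+\delta)\gamma)<0$, contradicting $f\circ G\geq0$ on $I$. Hence $G(t)>(1+\delta)\gamma$ for all $t\in I$, as claimed.

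The only genuinely substantive point — and thus the ``main obstacle'' — is the quantitative one: converting the hypothesis $a<(1-\varepsilon)(1-\tfrac1q)\gamma$ into a uniform lower bound $G(0)\geq r_2$ with $r_2$ bounded away from $\gamma$, so that a single $\delta$ (depending only on $\varepsilon,a,b,q$, not on $G$ or on $I$) works. This is handled by observing that $r_2$ depends only on the fixed data $a,b,q$ and that $r_2>\gamma$ strictly because $f(\gamma)<0$ strictly; one then simply picks $\delta$ with $(1+\delta)\gamma<r_2$. Everything else is the same continuity/intermediate-value argument used in Lemma \ref{lema comp1}, and I would reference \cite[Lemma 3.1]{pastor2} or \cite[Lemma 5.2]{B} for the bookkeeping details rather than reproducing them.
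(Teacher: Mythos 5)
Your proof is correct and follows precisely the continuity/intermediate-value argument the paper defers to \cite[Corollary 3.2]{pastor2}: compute $f(\gamma)=a-(1-\tfrac1q)\gamma$, use the $\varepsilon$-gap to get $f(\gamma)<0$ strictly so that the right root $r_2$ of $f$ is strictly above $\gamma$, fix $\delta$ with $(1+\delta)\gamma<r_2$, and bar $G$ from ever crossing $(1+\delta)\gamma$ by the intermediate value theorem. One cosmetic imprecision: if $a\le0$ there is no root $r_1\in(0,\gamma)$ (since $f<0$ on $(0,\gamma]$), so the set $\{f\ge0\}$ is the single ray $[r_2,\infty)$ rather than two intervals; your argument only uses $r_2$ and the fact that $f<0$ on $(\gamma,r_2)$, so this does not affect validity.
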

\begin{proof}
	The proof again follows by continuity and is a consequence of Lemma \ref{lema comp1}. See, for instance, \cite[Corollary 3.2]{pastor2}.
\end{proof}

With Lemma \ref{lema comp11} in hand, we are able to prove Theorem \ref{blow41}.

\begin{proof}[Proof of Theorem \ref{blow41}]
Using \ref{corbest} and \eqref{ee}, as  in \eqref{413} we obtain
	\begin{equation*}
	\begin{split}
		K(u,w)&=2E(u_0,w_0)+2\mathcal{P}(u,w)\\
		&\leq 2E(u_0,w_0)+2C_{\rm opt}K(u,w)^2.\\
		&= 2E(u_0,w_0)+\frac{1}{8E(P,Q)}K(u,w)^2.
	\end{split}
\end{equation*}
In view of \eqref{d1} we may found a small $\varepsilon>0$ such that $$	E(u_0,w_0)<(1-\varepsilon){E}(P,Q).$$
The idea now is to use Lemma \ref{lema comp11} with $a=2E(u_0,w_0)$, $b=(8E(P,Q))^{-1}$, $q=2$, $G(t)=K(u(t),w(t))$ and
$$
\gamma = (bq)^{-1}=4E(P,Q)=K(P,Q),
$$
where we used \eqref{412} in the last equality. Note that
	$$
a<(1-\varepsilon)\left(1-\frac{1}{q}\right)\gamma \Leftrightarrow E(u_0,w_0)<(1-\varepsilon){E}(P,Q)
$$
and
$$
G(0)>\gamma \Leftrightarrow K(u_0,w_0)>K(P,Q).
$$
Therefore, we may obtain a $\delta>0$ such that
$$
K(u(t),w(t))>(1+\delta)K(P,Q), \quad \mbox{for all}\;\; t\in J.
$$
Consequently, from \eqref{vdoislinha},
\[
\begin{split}
V''(t)&= 32 E(u_0,w_0)-8K(u(t),w(t))\\
&<32E(P,Q)-8(1+\delta)K(P,Q)\\
&=8K(P,Q)-8(1+\delta)K(P,Q)=-8\delta K(P,Q).
\end{split}
\]
This means that the second derivative of $V$ is bounded by a negative constant, which implies  that $J$ must be finite.
\end{proof}


\subsection*{Acknowledgment}
M. H. is supported by Universidade Estadual de Campinas, through CAPES-Brazil grant 88887.501354/2020-00. A. P. is partially supported by S\~ao Paulo Research Foundation (FAPESP) grant 2019/02512-5 and
Conselho Nacional de Desenvolvimento Cient\'ifico e Tecnol\'ogico (CNPq) grant 309450/2023-3. The authors would like to thank the referees for helpful comments, which improved the presentation of the paper.

\section*{Data Availability Statement}

No datasets were generated or analyzed during the current study.


\end{document}